\documentclass[11pt,letterpapert]{article}
\usepackage{graphicx}
\usepackage{amssymb}
\usepackage{epstopdf,mathtools,enumitem}
\usepackage{fancyhdr}
\usepackage{amsmath}
\usepackage{amsthm}
\usepackage{hyperref}
\usepackage{makeidx}
\usepackage{blkarray}
\usepackage{mathdesign}
\usepackage{color}
\usepackage{geometry}
\usepackage{soul}
\usepackage{nameref}
\mathtoolsset{showonlyrefs}
\usepackage{enumitem}

\definecolor{purple}{rgb}{.9,0,.9}
\definecolor{green}{rgb}{0,.7,0}

\newcommand\subsetsim{\mathrel{\substack{
  \textstyle\subset\\[-0.2ex]\textstyle\sim}}}

\graphicspath{{./figures/}}

\makeatletter
\let\orgdescriptionlabel\descriptionlabel
\renewcommand*{\descriptionlabel}[1]{%
  \let\orglabel\label
  \let\label\@gobble
  \phantomsection
  \edef\@currentlabel{#1}%
  \let\label\orglabel
  \orgdescriptionlabel{#1}%
}
\makeatother

\graphicspath{{./figures/}}

\newcommand{\Nat}{\mathbb{N}}

\newcommand{\Real}{\mathbb{R}}

\newcommand{\cA}{{\cal A}}\newcommand{\cC}{{\cal C}}
\newcommand{\cD}{{\cal D}}\newcommand{\cE}{{\cal E}}\newcommand{\cF}{{\cal F}}
\newcommand{\cG}{{\cal G}}\newcommand{\cH}{{\cal H}}\newcommand{\cI}{{\cal I}}
\newcommand{\cJ}{{\cal J}}
\newcommand{\cM}{{\cal M}}\newcommand{\cN}{{\cal N}}
\newcommand{\cP}{{\cal P}}\newcommand{\cR}{{\cal R}}
\newcommand{\cS}{{\cal S}}\newcommand{\cT}{{\cal T}}\newcommand{\cU}{{\cal U}}
\newcommand{\cV}{{\cal V}}\newcommand{\cW}{{\cal W}}\newcommand{\cX}{{\cal X}}
\newcommand{\cZ}{{\cal Z}}

\newcommand{\ba}{{\bf a}}\newcommand{\bb}{{\bf b}}\newcommand{\bc}{{\bf c}}
\newcommand{\be}{{\bf e}}\newcommand{\bff}{{\bf f}}
\newcommand{\bg}{{\bf g}}\newcommand{\bh}{{\bf h}}

\newcommand{\bm}{{\bf m}}\newcommand{\bn}{{\bf n}}

\newcommand{\bt}{{\bf t}}\newcommand{\bu}{{\bf u}}
\newcommand{\bv}{{\bf v}}\newcommand{\bw}{{\bf w}}\newcommand{\bx}{{\bf x}}
\newcommand{\bA}{{\bf A}}

\newcommand{\bE}{{\bf E}}\newcommand{\bF}{{\bf F}}

\newcommand{\bL}{{\bf L}}\newcommand{\bM}{{\bf M}}
\newcommand{\bP}{{\bf P}}
\newcommand{\bQ}{{\bf Q}}

\newcommand{\ve}{\varepsilon}

\newcommand{\bkappa}{\boldsymbol{\kappa}}
\newcommand{\blambda}{\boldsymbol{\lambda}}

\newcommand{\bbbeta}{\boldsymbol{\beta}}
\newcommand{\bnu}{\boldsymbol{\nu}}
\newcommand{\bmu}{\boldsymbol{\mu}}
\newcommand{\btheta}{\boldsymbol{\theta}}
\newcommand{\bxi}{\boldsymbol{\xi}}
\newcommand{\bomega}{\boldsymbol{\omega}}
\newcommand{\bchi}{{\boldsymbol{\chi}}}

\newcommand{\bphi}{\boldsymbol{\phi}}

\newcommand{\biota}{\boldsymbol{\iota}}

\newtheorem{theorem}{Theorem}[section]
\newtheorem{lemma}[theorem]{Lemma}

\newtheorem{proposition}[theorem]{Proposition}

 \def\smath#1{\text{\scalebox{.8}{$#1$}}}
\def\sfrac#1#2{\smath{\frac{#1}{#2}}}

\newcommand{\beqn}{\begin{equation}}
\newcommand{\eeqn}{\end{equation}}

\newcommand{\bzero}{{\bf 0}}

\newcommand{\trans}{{\scriptscriptstyle\mskip-1mu\top\mskip-2mu}}

\def\H{\mathcal H}
\def\M{\mathcal M}
\def\A{\mathcal A}
\def\W{\mathcal W}

\def\R{\mathbb R}

\def\e{\varepsilon}
\def\s{\sigma}

\def\om{\omega}
\def\bbeta{\boldsymbol{\eta}}
\def\bom{\boldsymbol{\omega}}
\def\l{\lambda}

\def\aa{\textbf{a}}
\def\uu{\textbf{u}}
\def\ee{\textbf{e}}
\def\tt{\textbf{t}}

\def\Om{\Omega}
\def\d{\text{ d}}

\def\spt{{\rm spt}}

\def\pa{\partial}

\def\E{\mathcal{E}}

\def\F{\mathcal{F}}

\newcommand{\vol}{\textbf{\rm \textbf{vol}}}

\renewcommand{\a}{\alpha}
\renewcommand{\b}{\beta}
\renewcommand{\d}{\mathrm{d}}
\newcommand{\D}{\mathcal{D}}

\renewcommand{\l}{\lambda}
\renewcommand{\L}{\Lambda}

\def\tt{\mathbf{t}}
\def\aa{\mathbf{a}}

\numberwithin{equation}{section}

\title{First variation of the fractional $k$-dimensional measure: extending the concept of nonlocal curvature to submanifolds}

\author{Cornelia Mihaila and Brian Seguin}

\begin{document}

\maketitle

\tableofcontents

\begin{abstract}
\noindent The fractional $k$-dimensional measure of a submanifold of $\Real^n$ is a generalization of the fractional perimeter and fractional length appearing in the literature and depends on a parameter $\sigma$ between $0$ and $1$. Here its first variation is computed. The resulting formula is used to define a nonlocal version of the mean-curvature vector for embedded submanifolds. It is shown that in the case where $k=n-1$, this agrees with the nonlocal mean-curvature that has been widely studied.
\end{abstract}

\section{Introduction} 

The nonlocal or fractional perimeter of a bounded set $E\subseteq\R^n$  for $\s\in(0,1)$ was introduced by Caffarelli, Roquejoffre, and Savin in \cite{CRS10}, and is defined by
\beqn
\text{Per}_\sigma(E)\coloneqq\frac{1}{\alpha_{n-1}}\int_E\int_{E^c} |x-y|^{-n-\sigma} dxdy,
\eeqn
where $\alpha_{n-1}$ is the volume of the unit ball in $\Real^{n-1}$. When $E$ is unbounded, the above quantity is generally infinity. Thus, in the unbounded case one needs to consider the fractional perimeter of $E$ relative to a bounded, open set $\Om$:
\begin{align}
\text{Per}_\sigma(E,\Omega)\coloneqq&\frac{1}{\alpha_{n-1}}\Big( \int_{E\cap\Omega}\int_{E^c}+\int_{E\cap\Omega^c}\int_{E^c\cap\Omega}\Big) |x-y|^{-n-\sigma} dxdy\\
\label{sper}=&\frac{1}{\alpha_{n-1}}\int_{E}\int_{E^c} \frac{\max\{\chi_\Omega(x),\chi_\Omega(y)\}}{|x-y|^{n+\sigma}} dxdy.
\end{align}
Caffarelli and Valdinoci \cite{CV11} proved that when $E$ has smooth boundary, for almost every $r>0$,
\beqn\label{limsPer}
\lim_{\sigma\uparrow 1}(1-\sigma)\text{Per}_\sigma(E,B_r)=\cH^{n-1}(\partial E\cap B_r)
\eeqn
which motivates calling it a perimeter function. With this perimeter function, mathematicians have been interested in many of the classical questions from minimal surface theory, including characterizations of minimizers in Plateau's problem or under fixed volume constraints. For instance, Dipierro, Savin, and Valdinoci \cite{DSV17, DSV20} considered a nonlocal Plateau-type problem and considered the irregularities induced by the nonlocal setting, and Cabr\'e, Fall, and Weth \cite{CFSMW16} and Ciraolo, Figalli, Maggi, Novaga \cite{CFMN16} considered properties of critical numbers for constant mean curvature sets with fixed mass. See also the work of Dipierro and Valdinoci \cite{DV99} and the references therein.

By taking the first variation of the nonlocal perimeter functional, Abatangelo and Valdinoci \cite{AV14} derived the nonlocal mean-curvature equation for $E$ which has a $C^{1,\a}$ boundary with $\a>\s$:
\beqn\label{NLMC}
H_\s(z)\coloneqq \frac{1}{\om_{n-2}}\int_{\R^n}\frac{\tilde{\chi}_E(y)\,dx}{|z-y|^{n+\s}},\qquad z\in \pa E,
\eeqn
where $\omega_{n-2}$ is the surface area of the unit ball in $\Real^{n-1}$, $\tilde{\chi}_E=\chi_{E}-\chi_{E^c}$ with $\chi_S$ denoting the characteristic function of a set $S$, and the integral is computed as a principle value. The nonlocal mean-curvature converges to the classical notion in the sense that 
\[
\lim\limits_{\sigma\uparrow 1}(1-\sigma) H_\sigma(z)=H(z).
\]
Moreover, in the case of a point $z\in\pa E$ where most of the nearby points are in $E^c$, like on the sphere, the nonlocal mean curvature is negative. Similarly the nonlocal mean curvature is positive if most of the points near $z$ are in $E$. This is consistent with the sign of the classical mean-curvature when an exterior unit-normal is used.

A limitation of the nonlocal perimeter is that it only describes the perimeter of $(n-1)$-dimensional sets that are the boundaries of $n$-dimensional sets in $\R^n$. Paroni, Podio-Guidugli, and Seguin extended this idea to $(n-1)$-dimensional hypersurfaces in \cite{PPGS99}. By defining $\cX(\partial E)$ to be the set of pairs $(x,y)\in\R^n\times\R^n$ such that the line segment connecting $x$ and $y$ intersects $\partial E$ an odd number of times, they noticed that the definition from Caffarelli, Roquejoffre, and Savin is equivalent to 
\beqn
\text{Per}_\sigma(E,\Omega)=\frac{1}{2\alpha_{n-1}}\int_{\cX(\partial E)} \frac{\max\{\chi_\Omega(x),\chi_\Omega(y)\}}{|x-y|^{n+\sigma}} dxdy.
\eeqn
Indeed, almost every pair of points on opposite sides of the boundary $\partial E$ will cross it an odd number of times. With this perspective they defined 
\beqn\label{sarea}
\text{Area}_\sigma(\cS,\Omega)\coloneqq\frac{1}{2\alpha_{n-1}}\int_{\cX(\cS)} \frac{\max\{\chi_\Omega(x),\chi_\Omega(y)\}}{|x-y|^{n+\sigma}} dxdy
\eeqn
to be the fractional area for a hypersurface $\cS$ in $\R^n$. Under the assumption that $\cS\subseteq\Omega$, they showed that 
\beqn\label{limsarea}
\lim_{\sigma\uparrow 1}(1-\sigma)\text{Area}_\sigma(\cS,\Omega)=\cH^{n-1}(\cS).
\eeqn
Moreover, they computed its first variation and showed that the first variation is zero if and only if
\beqn\label{n-1var}
\lim_{\ve\uparrow 0}\left(\int_{\A_{\rm ext}(z,\e)}-\int_{\A_{\rm int}(z,\e)}\right)\frac{1}{|z-y|^{n+\s}}\,dy=0,\qquad z\in\cS,
\eeqn 
where 
\begin{align*}
\cA_{\rm ext}(z,\e)\coloneqq \{y\in\R^n:&\big((y,z)\in\cX_\e(\cS)\text{ and } (z-y)\cdot \bn(z)>0\big)\\
&\text{or }\big((y,z)\in\cX_\e(\cS)^c\text{ and } (z-y)\cdot \bn(z)<0\big)\},\\
\cA_{\rm int}(z,\e)\coloneqq \{y\in\R^n:&\big((y,z)\in\cX_\e(\cS)^c\text{ and } (z-y)\cdot \bn(z)>0\big)\\
&\text{or }\big((y,z)\in\cX_\e(\cS)\text{ and } (z-y)\cdot \bn(z)<0\big)\},
\end{align*}
with $\cX_\e(\cS)$ being the set of all pairs of points $(y,z)$ such that the segment between $y$ and $z$ crosses $\cS$ an odd number of times and $|z-y|>\e$ and $\bn$ an orientation for $\cS$. This led them to define the nonlocal mean-curvature of $\cS$ at $z$ by
\beqn\label{Hsigma}
H_\s(z)\coloneqq \frac{1}{\om_{n-2}}\int_{\R^n}\frac{\hat{\chi}_\cS(y)}{|z-y|^{n+\sigma}}\,dy,\qquad\text{ for }z\in\cS
\eeqn
where 
\[
\hat{\chi}_\cS(z,y)\coloneqq \begin{cases}
1&y\in\cA_i(z),\\
0& y\notin \cA_i(z)\cup\cA_e(z)\\
-1&y\in\cA_e(z),\\
\end{cases}
\]
and the integral is computed as a principle value. This agrees with \eqref{NLMC} in the case where $\cS$ is the boundary of a set $E$.

In \cite{S20}, Seguin introduced a notion of fractional length. (See Seguin \cite{S20c} for an updated, corrected version of this work.) His motivation came from considering the $n=2$ case, where a curve is a hypersurface. In this setting, a line segment can be viewed as a one-dimensional disk. Thus, rather than considering a pairs of points $(x,y)$ that describe line segments, he used disks described by a center, a unit vector determining the orientation, and a radius. This led to the following definition of fractional length:
\beqn\label{slen}
\text{Len}_\sigma(\cC,\Omega)\coloneqq\int_{\cD(\cC)}  \frac{\sup\{\chi_\Omega(p+r\bv):\bv\in \cU(\{\bu\}^\perp)\}}{r^{1+\sigma}} d\cH^{2n}(p,\bu,r),
\eeqn
where $\cD(\cC)$ consists of all disks, described by a center $p$, unit normal vector $\bu$, and radius $r$, that intersect the curve $\cC$ an odd number of times. Analogous to \eqref{limsarea}, it was shown that
\beqn\label{limslen}
\lim_{\sigma\uparrow 1}(1-\sigma)\text{Len}_\sigma(\cC,\Omega)=\frac{4\pi^{n-1}}{\Gamma(\sfrac{n+1}{2})\Gamma(\sfrac{n-1}{2})(n-1)}\cH^1(\cC),
\eeqn
where $\Gamma$ is the gamma function. The presence of the factor on the right-hand side is due to the fact that the definition of the fractional length does not involve a normalization factor. Assuming that $\cC$ has $C^{1,\a}$-regularity with $\a>\s$, the first variation of $\text{Len}_\s(\cC,\Om)$ vanishing is equivalent to
\[
\lim_{\e\uparrow 0}\Big(\int_{\cA_{\rm o}^+(z,\e)}-\int_{\cA_{\rm e}^+(z,\e)}\Big)\frac{(\bb\wedge\ba)\bt(z)}{r^{1+\s}}\d\H^{2n-2}(\ba,\bb,r)=\textbf{0},\qquad z\in\cC,
\]
where $\bt(z)$ is a unit tangent to $\cC$ at $z$ and
\begin{align*}
\cA^+_{\rm o}(z,\e)&\coloneqq \{(\aa,\bb,r\}\in\cU^2_\perp\times(\e,\infty):\H^0\big(D(z+r\ba,\bb,r)\cap\cC)\text{ is odd}, \bb\cdot\bt(z)>0\},\\
\cA^+_{\rm e}(z,\e)&\coloneqq \{(\aa,\bb,r\}\in\cU^2_\perp\times(\e,\infty):\H^0\big(D(z+r\ba,\bb,r)\cap\cC)\text{ is even}, \bb\cdot\bt(z)>0\}.
\end{align*}
Using this, Seguin defined the nonlocal curvature-vector $\bkappa_\s$ at $z\in\cC$ by
\beqn\label{kapppas}
\bkappa_\s(z)\coloneqq \lim_{\e\rightarrow0}\Big(\int_{\cA_{\rm e}^+(z,\e)}-\int_{\cA_{\rm o}^+(z,\e)}\Big)\frac{(\bb\wedge\ba)\bt(z)}{r^{1+\s}}\d\H^{2n-2}(\ba,\bb,r).
\eeqn 

The interest in understanding fractional measure or mass in higher codimensions has been demonstrated in recent works. Employing the Fourier transform of a measure, Cicalese, Heilmann, Kubin, Onoue, and Ponsiglione \cite{CHKOP25} introduced the notion of $s$-fractional mass for one-currents in higher codimension. Caselli, Freguglia, and Picenni \cite{CFP24} introduced a notion of fractional $s$-mass on ($n-2$)-dimensional closed orientable surfaces in $\R^n$ using the concepts of linking number and fractional Sobolev spaces. In Mihaila and Seguin~\cite{MS25}, the authors extended the notions of fractional length and area for curves and hypersurfaces described above so as to work for any $k$-dimensional manifold in $\Real^n$. They defined the $k$-dimensional $\sigma$-measure of a $k$-dimensional manifold $\cM$ relative to a bounded, open set $\Omega$ by
\begin{equation}\label{smeaskintro}
\text{Meas}_{\s}^k(\M,\Om)\coloneqq \int_{\D(\M)}r^{k-n-\s}  \sup_{\mathbf{u}\in \cU([\bom]^\perp)}\chi_{\Om}(p+r\mathbf{u}) \,d \H^{n+k(n-k)+1}(p,\bom,r),
\end{equation}
where $\cD(\cM)$ is the collection of $(n-k)$-dimensional disks that intersect $\cM$ an odd number of times. In this setting a disk is described by a center point $p$, orientation $\bom$, which is a simple $k$-vector of unit length describing the space orthogonal to the disk, and radius $r$. They proved that if $\M\subseteq \Om$, then 
\begin{equation}\label{limksmeas}
\lim_{\s\uparrow 1} (1-\s)\text{\rm Meas}_{\s}^k(\M,\Om)=\frac{4 \pi^{\frac{(n+2)(k+1)}{2}} \Gamma(\frac{n-k+1}2) }{\sqrt{\pi} \Gamma(\frac{n+1}2)(n-k)}\prod_{i=1}^{k+1}\frac{\Gamma (\frac{i}2)}{\pi^i\Gamma (\frac{n-i+1}2)}\H^k(\M).
\end{equation}
This constant is consistent with the constants in the $k=1$ and $k=n-1$ cases. See \eqref{limslen} and \eqref{limsarea}, respectively. The result holds for all integers $0\le k\le n-1$. 

We use the term nonlocal or fractional measure for $\text{Meas}_\sigma^k$ since it generalizes the concepts of fractional perimeter, fractional area, and fractional length.  However, the $\sigma$-measure relative to a bounded set $\Omega$ is not a measure because it is not additive. This was shown explicitly in \cite{MS25}. In this work, we complete the natural next step of finding necessary conditions for the first variation to be zero and use this to define a nonlocal mean curvature-vector. In particular, we show that a necessary and sufficient condition for the vanishing of the first variation of $\text{\rm Meas}^k_\s(\M,\Om)$ with respect to compact, $k$-dimensional manifolds with the same boundary as $\M$ is that for all $z\in\M$, 

\beqn
\lim_{\e\downarrow 0}\left(\int_{\A_{\rm e}^+(z,\e)}-\int_{\A_{\rm o}^+(z,\e)}\right)\frac{(\aa\wedge\bom)\llcorner\vol(z)}{r^{1+\s}}\,d\H^{w+1}(\aa,\bom,r)=\bzero,
\eeqn
where $w=n-1+k(n-1-k)$, $\vol$ is a volume form for $\cM$, and
\begin{align}
\A_{\rm o}^+(z,\e)&\coloneqq \{(\ba,\bom,r)\in\W\times(\e,\infty): \H^0(D(z+r\aa,\bom,r)\cap \M) \mbox{ is odd},\ \bom\cdot \vol(z)>0\},\\
\A_{\rm e}^+(z,\e)&\coloneqq \{(\ba,\bom,r)\in\W\times(\e,\infty): \H^0(D(z+r\aa,\bom,r)\cap \M) \mbox{ is even},\ \bom\cdot \vol(z)>0\}.
\end{align}
Notice that unlike the nonlocal mean-curvature for hypersurfaces, but similar to the nonlocal curvature for a curve, this quantity is not normalized. Now that the first variation is understood, the next step would be to consider properties of surfaces where the first variation is equal to zero. This work also opens up the discussion of nonlocal mean curvature flow, as in the work of Chambolle, Marini, and Ponsiglione \cite{CMP15}, to a wider variety of surfaces.

Throughout this paper we use $k$-vectors. For a detailed introduction to $k$-vectors, see Ros\'en \cite{R19}.  In an effort to make this work self contained, we have included an introduction to $k$-vectors in the Section~\ref{sec:2}, which overlaps with our set up from \cite{MS25}.  In the next section we also set notion and prove some propositions that will be used in later calculations. In Section~\ref{sec:3} we introduce sets of disks and results about the sets of disks which prepare us the first variation. In Section~\ref{sec:4}, we compute the first variation formula and calculate the mean-curvature vector which works with the definitions from \cite{PPGS99}, \cite{S20}, \cite{S20c}. The Appendix~\ref{secA} provides the change of variables formula and transport theorem needed in the first variation theorem. 

\smallskip
\section{Notation and an introduction to multivectors}\label{sec:2}
\setcounter{section}{2} \setcounter{equation}{0} 

Let $\Real^+\coloneqq(0,\infty)$ denote the set of positive numbers not including zero and $\Real^+_0\coloneqq[0,\infty)$ the set of positive numbers including zero.  Given an inner-product space $\cV$, let $\cU(\cV)$ denote the set of all unit vectors in $\cV$.  If $\cZ$ is a subset of $\cV$, let $\cZ^\perp$ denote those vectors in $\cV$ that are orthogonal to every vector in $\cZ$.   

There are different ways to define multivectors.  Here, we take the approach of viewing them as skew multilinear mappings.  While this is not the definition used in the book by Ros\'en \cite{R19}, it is equivalent, and we recommend Ros\'en's book for an introduction to multivectors. The definition presented here can be found in Federer's book \cite{Fed}. The overview of multivectors presented here is mostly taken from our work \cite{MS25}, with some alterations and additions. 

Fix a natural number $k$ such that $1\leq k\leq n$.  Given $k$ vectors $\bw_i\in\Real^n$, $i\in\{1,\dots,k\}$, define the $k$-vector $\bw_1\wedge\dots\wedge\bw_k$ to be the multilinear map from the $k$-fold product $(\R^n)^k$ to $\R$ by
\beqn\label{simpkvector}
(\bw_1\wedge\dots\wedge\bw_k)(\ba_1,\dots,\ba_k)\coloneqq\text{det}\ 
\begin{blockarray}{cccc}
\begin{block}{(cccc)}
\bw_1\cdot\ba_1&\bw_1\cdot\ba_2&...&\bw_1\cdot\ba_k\\
\bw_2\cdot\ba_1&\bw_2\cdot\ba_2&...&\bw_2\cdot\ba_k\\
\vdots&\vdots&\ddots&\vdots\\
\bw_{k}\cdot\ba_1&\bw_{k}\cdot\ba_2&...&\bw_{k}\cdot\ba_k\\
\end{block}
\end{blockarray}\, ,
\qquad (\ba_1,\dots,\ba_k)\in(\R^n)^k.
\eeqn
Notice that this mapping is skew in the sense that switching any pair of $\ba_i$ results in changing the sign of the output.  The set of all $k$-vectors, denoted by $\Lambda^k(\R^n)$, is the linear span of all linear mappings of the form $\bw_1\wedge\dots\wedge\bw_k$.  It is true that $\text{dim}\,\Lambda^k(\R^n)={n\choose k}=\frac{n!}{k!(n-k)!}$. 

The collection of $k$-vectors of the form \eqref{simpkvector} are called simple $k$-vectors, and are denoted by $\hat\Lambda^k(\Real^n)$.  While $\Lambda^k(\R^n)$ is a vector space, $\hat\Lambda^k(\R^n)$ is not.  However, it is a manifold.  Moreover, simple $k$-vectors span the set of all $k$-vectors.  For a simple $k$-vector $\bomega=\bw_1\wedge\dots\wedge\bw_k$, we denote by $[\bomega]$ the subspace of $\Real^n$ spanned by the $\bw_i$.  Let $\bP_{\bomega}$ and $\bP_{\bomega}^\perp$ be the orthogonal projections of $\R^n$ into $[\bomega]$ and $[\bomega]^\perp$, respectively, so that $\bP_{\bomega}+\bP^\perp_{\bomega} =\textbf{1}_{n}$, where $\textbf{1}_n$ is the identity map on $\Real^n$.

We define several linear mappings on $k$-vectors and pairs of $k$-vectors.  To do so, we first define them on simple $k$-vectors and then extend them to all $k$-vectors by linearity.  The reason that this process works is because $k$-vectors satisfy a universality property.  See Ros\'en \cite{R19} for the details. As a first example of this, consider a linear mapping $\bL:\Real^n\rightarrow \Real^n$.  This induces a linear mapping from $\Lambda^k(\Real^n)$ to itself that we will also denote by $\bL$ such that it acts on a simple $k$-vector $\bu_1\wedge\dots\wedge\bu_k\in\hat\Lambda^k(\R^n)$ by
\beqn\label{Lkvector}
\bL(\bu_1\wedge\dots\wedge\bu_k)\coloneqq\bL\bu_1\wedge\dots\wedge\bL\bu_k.
\eeqn

Given two simple $k$-vectors $\bu_1\wedge\dots\wedge\bu_k,\bv_1\wedge\dots\wedge\bv_k\in\hat\Lambda^k(\R^n)$, their inner-product is defined by
\beqn\label{kvip}
(\bu_1\wedge\dots\wedge\bu_k)\cdot(\bv_1\wedge\dots\wedge\bv_k)\coloneqq\text{det}\ 
\begin{blockarray}{cccc}
\begin{block}{(cccc)}
\bu_1\cdot\bv_1&\bu_1\cdot\bv_2&...&\bu_1\cdot\bv_k\\
\bu_2\cdot\bv_1&\bu_2\cdot\bv_2&...&\bu_2\cdot\bv_k\\
\vdots&\vdots&\ddots&\vdots\\
\bu_{k}\cdot\bv_1&\bu_{k}\cdot\bv_2&...&\bu_{k}\cdot\bv_k\\
\end{block}
\end{blockarray}\, .
\eeqn
Orthogonal, simple $k$-vectors have the following geometric interpretation: given $\bomega,\bnu\in\hat\Lambda^k(\R^n)$ such that $\bomega\cdot\bnu=0$, there is an $\ba\in[\bomega]$ such that $\ba\in[\bnu]^\perp$.

We will denote the set of simple $k$-vectors of unit length by $\hat\Lambda^k_u(\R^n)$.  Elements of $\hat\Lambda^k_u(\R^n)$ can be viewed as oriented $k$-dimensional subspaces of $\R^n$.  Thus, $\hat\Lambda^k_u(\R^n)$ can be used as a double cover for the Grassmannian $\textbf{Gr}(k,\R^n)$.  It follows that $\text{dim}\,\hat\Lambda^k_u(\R^n)=k(n-k)$.  Given a subspace $\cV$ of $\Real^n$, we will use the notation $\hat\Lambda_u^k(\cV)$ for those unit simple $k$-vectors $\bomega$ such that $[\bomega]\subseteq\cV$.

The exterior product $\wedge:\Lambda^k(\R^n)\times\Lambda^\ell(\R^n)\rightarrow\Lambda^{k+\ell}(\R^n)$ is the bilinear map such that given simple multivectors $\bu_1\wedge\dots\wedge\bu_k\in\hat\Lambda^k(\R^n)$ and $\bv_1\wedge\dots\wedge\bv_\ell\in\hat\Lambda^\ell(\R^n)$, we have
\beqn
(\bu_1\wedge\dots\wedge\bu_k)\wedge(\bv_1\wedge\dots\wedge\bv_\ell)\coloneqq\bu_1\wedge\dots\wedge\bu_k\wedge\bv_1\wedge\dots\wedge\bv_\ell.
\eeqn
We will also make use of the interior product of multivectors.  For $k\geq \ell$, the left interior-product $\lrcorner:\Lambda^\ell(\R^n)\times\Lambda^k(\R^n)\rightarrow\Lambda^{k-\ell}(\R^n)$ is the bilinear map such that for $\bnu\in\Lambda^\ell(\R^n)$ and $\blambda\in\Lambda^k(\R^n)$, $\bnu\lrcorner\blambda$ is the unique $(k-\ell)$-vector satisfying
\beqn \label{movingdotL}
\bomega\cdot(\bnu\lrcorner\blambda)=(\bnu\wedge\bomega)\cdot\blambda,\qquad \bomega\in\Lambda^{k-\ell}(\R^n),
\eeqn
and the right interior-product $\llcorner:\Lambda^k(\R^n)\times\Lambda^\ell\R^n)\rightarrow\Lambda^{k-\ell}(\R^n)$ is the bilinear map such that for $\bnu\in\Lambda^\ell(\R^n)$ and $\blambda\in\Lambda^k(\R^n)$, $\blambda\llcorner\bnu$ is the unique $(k-\ell)$-vector satisfying
\beqn \label{movingdotR}
\bomega\cdot(\blambda\llcorner\bnu)=(\bomega\wedge\bnu)\cdot\blambda,\qquad \bomega\in\Lambda^{k-\ell}(\R^n).
\eeqn
These two interior products differ at most by a sign in that
\beqn\label{LRswitch}
\bnu\lrcorner\blambda=(-1)^{\ell(k-\ell)}\blambda\llcorner\bnu,\quad \bnu\in\Lambda^\ell(\R^n),\ \blambda\in\Lambda^k(\R^n).
\eeqn

It transpires that if $\bom\in\hat\Lambda^k_u(\Real^n)$, then
\beqn\label{projip}
\bP_{\bom}\bu=(\bom\llcorner\bu)\lrcorner\bom,\qquad \bu\in\Real^n.
\eeqn
Given an orientation for $\Real^n$ in the form of a top dimensional multivector $\biota\in\hat\Lambda_u^n(\Real^n)$, one can then define the (left) Hodge star map $*:\Lambda^k(\Real^n)\rightarrow\Lambda^{n-k}(\Real^n)$ by
\beqn\label{Hstar}
*\bomega\coloneqq \biota\llcorner \bomega,\qquad \bomega\in\Lambda^k(\Real^n).
\eeqn
The Hodge star is an isometry on multvectors and satisfies the identity
\beqn\label{IPstar}
*(\bnu\lrcorner\blambda)=(* \blambda)\wedge\bnu, \quad \bnu\in\Lambda^\ell(\R^n),\ \blambda\in\Lambda^k(\R^n).
\eeqn
A useful relation involving the left interior-product is
\beqn\label{moveuk}
(\bomega\wedge\blambda)\lrcorner\bmu=\blambda\lrcorner(\bomega\lrcorner\bmu)
\eeqn
for all $\bomega\in\Lambda^{k}(\R^n)$, $\blambda\in\Lambda^{\ell}(\R^n)$, and $\bmu\in\Lambda^{m}(\R^n)$ where $k+\ell\leq m$.  There is also a varient on Lagrange's identity: if $\ba\in\Real^n$ and $\bomega\in\Lambda^k(\Real^n)$, then
\beqn\label{Lagrange}
|\ba|^2|\bomega|^2=|\ba\lrcorner\bomega|^2+|\ba\wedge\bomega|^2.
\eeqn
There is also the anticommutation relation:
\beqn\label{anticom}
\ba\lrcorner(\bb\wedge\bomega)+\bb\wedge(\ba\lrcorner\bomega)=(\ba\cdot\bb)\bomega,
\eeqn
which holds for all $\ba,\bb\in\Real^n$ and $\bomega\in\Lambda^k(\Real^n)$. It can be shown that for a simple $k$-vector $\bomega$,
\beqn
[\bomega]=\{\ba\in\R^n\ :\ \ba\wedge\bomega=\textbf{0}\}\quad\text{and}\quad [\bomega]^\perp=\{\ba\in\R^n\ :\ \ba\lrcorner\bomega=\textbf{0}\}.
\eeqn
Analogous facts are true involving the right interior-product.

We will illustrate the left interior-product by mentioning a fact that will be of use later.  Let $\bomega\in\hat\Lambda_u^4(\Real^n)$.  It follows that there are orthonormal vectors $\bw_1,\bw_2,\bw_3$, and $\bw_4$ such that $\bomega=\bw_1\wedge\bw_2\wedge\bw_3\wedge\bw_4$.  From the definitions above, for $\ba\in\Real^n$,
\beqn
\ba\lrcorner \bomega= (\ba\cdot\bw_1)\bu_2\wedge\bw_3\wedge\bw_4-(\ba\cdot\bw_2)\bw_1\wedge\bu_3\wedge\bw_4+(\ba\cdot\bw_3)\bw_1\wedge\bw_2\wedge\bw_4-(\ba\cdot\bw_4)\bw_1\wedge\bw_2\wedge\bw_3.
\eeqn
Thus,
\beqn
\bw_2\lrcorner \bomega= -\bw_1\wedge\bw_3\wedge\bw_4,
\eeqn
and for $\bb\in\Real^n$,
\beqn
\bb\wedge(\bw_2\lrcorner\bomega)=\bw_1\wedge\bb\wedge\bw_3\wedge\bw_4.
\eeqn
This kind of identity generalizes.  Namely, if $\bomega=\bw_1\wedge\dots\wedge\bw_k\in\hat\Lambda_u^k(\Real^n)$ and $\bb\in\Real^n$, then $\bb\wedge(\bw_i\lrcorner\bomega)$ is the $k$-vector obtained by replacing $\bw_i$ with $\bb$ in $\bomega$---that is,
\beqn\label{replacewedgeID}
\bb\wedge(\bw_i\lrcorner\bomega)=\bw_1\wedge\dots\wedge\bw_{i-1}\wedge\bb\wedge\bw_{i+1}\wedge\dots\bw_k.
\eeqn

We now establish several facts involving multivectors that will be of use in this work that are not present in Ros\'en \cite{R19}.

\begin{proposition}\label{dotreduction} Let $\aa\in\cU(\Real^n)$ and $\bom,\bmu\in\hat\Lambda^k(\Real^n)$. If $\aa\in[\bom]^\perp$, then 
\[
(\aa\wedge\bom)\cdot(\aa\wedge\bmu)=\bom\wedge\bmu.
\]
\end{proposition}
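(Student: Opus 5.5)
The right-hand side should be read as the inner product $\bom\cdot\bmu$; the wedge $\bom\wedge\bmu$ is a $2k$-vector and cannot equal a scalar, so I take this to be a typo. The plan is to prove $(\aa\wedge\bom)\cdot(\aa\wedge\bmu)=\bom\cdot\bmu$ using only the defining property \eqref{movingdotL} of the left interior product and the anticommutation relation \eqref{anticom}. I will not need a basis adapted to both $\bom$ and $\bmu$. This matters because the hypothesis only gives $\aa\perp[\bom]$, and $\aa$ need not be orthogonal to $[\bmu]$.

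First, apply \eqref{movingdotL} with $\ell=1$, $\bnu=\aa$, the $k$-vector $\bom$ in the role of the test multivector, and $\blambda=\aa\wedge\bmu\in\Lambda^{k+1}(\Real^n)$. This gives
\[
(\aa\wedge\bom)\cdot(\aa\wedge\bmu)=\bom\cdot\big(\aa\lrcorner(\aa\wedge\bmu)\big).
\]
Next, use \eqref{anticom} with $\bb=\aa$ and the $k$-vector $\bmu$. Since $|\aa|=1$, this yields
\[
\aa\lrcorner(\aa\wedge\bmu)=\bmu-\aa\wedge(\aa\lrcorner\bmu).
\]
Substituting, it remains to show that $\bom\cdot\big(\aa\wedge(\aa\lrcorner\bmu)\big)=0$.

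For this last term, I apply \eqref{movingdotL} once more, now with $\blambda=\bom$ and the test $(k-1)$-vector $\aa\lrcorner\bmu$. This gives
\[
\bom\cdot\big(\aa\wedge(\aa\lrcorner\bmu)\big)=(\aa\lrcorner\bom)\cdot(\aa\lrcorner\bmu).
\]
Since $\aa\in[\bom]^\perp$ and $\bom$ is simple, the characterization $[\bom]^\perp=\{\ba:\ba\lrcorner\bom=\bzero\}$ gives $\aa\lrcorner\bom=\bzero$, so the term vanishes and the identity follows.

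The only delicate point is bookkeeping. I must check that the inner product on $\Lambda^k(\Real^n)$ is symmetric, so that each use of \eqref{movingdotL} has its arguments on the correct sides, and that the orders of $\aa$ and $\bom$ match the convention in \eqref{movingdotL} ($\bnu\wedge\bomega$ with $\bnu=\aa$ first). In each application the wedge is formed with $\aa$ on the left, so no sign from \eqref{LRswitch} enters. Simplicity of $\bmu$ is never used.
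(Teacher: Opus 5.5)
Your proof is correct, and you are right that the right-hand side must be read as $\bom\cdot\bmu$; the paper's own proof ends with $\det(\bw_i\cdot\bm_j)=\bom\cdot\bmu$. Your route differs from the paper's.

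The paper writes $\bom=\bw_1\wedge\dots\wedge\bw_k$ and $\bmu=\bm_1\wedge\dots\wedge\bm_k$ and forms the $(k+1)\times(k+1)$ determinant from \eqref{kvip}. It observes that $|\aa|=1$ and $\aa\cdot\bw_i=0$ reduce the first column to $(1,0,\dots,0)$, then expands along that column. This uses only the definition of the inner product and cofactor expansion. However, it needs both $\bom$ and $\bmu$ to be simple in order to write down the Gram matrix.

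Your argument instead combines the adjoint property \eqref{movingdotL} with \eqref{anticom}, which the paper quotes without proof. In exchange it is basis-free and actually proves the polarized form of \eqref{Lagrange}:
\[
(\aa\wedge\bom)\cdot(\aa\wedge\bmu)=|\aa|^2\,\bom\cdot\bmu-(\aa\lrcorner\bom)\cdot(\aa\lrcorner\bmu),\qquad \bom,\bmu\in\Lambda^k(\Real^n).
\]
Consequently the proposition holds for every $\bmu\in\Lambda^k(\Real^n)$ and every $\bom$ with $\aa\lrcorner\bom=\bzero$. Simplicity of $\bom$ is needed only to convert $\aa\in[\bom]^\perp$ into that condition.

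Your worry that $\aa$ need not be orthogonal to $[\bmu]$ does not affect the paper's proof either. The possibly nonzero entries $\aa\cdot\bm_j$ sit in the first row, and expanding along the first column eliminates them.
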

\begin{proof}
As $\bom$ and $\bmu$ are simple $k$-vectors, they can be written as $\bom=\bw_1\wedge...\wedge\bw_k$ and $\bmu=\bm_1\wedge...\wedge \bm_k$, where $\{\bw_1,\dots,\bw_k\}$ and $\{\bm_1,\dots\bm_k\}$ are orthonormal basis for $[\bom]$ and $[\bmu]$, respectively. From the definition of the inner-product of multivectors \eqref{kvip}, we have 
\[(\aa\wedge\bom)\cdot(\aa\wedge\bmu)=
\det\begin{pmatrix}
\aa\cdot \aa& \aa\cdot \bm_1&\hdots&\aa\cdot \bm_k \\
\bw_1\cdot \aa &\bw_1\cdot\bm_1&\hdots&\bw_1\cdot\bm_k\\
\vdots &\vdots&\ddots&\vdots\\
\bw_k\cdot\aa&\bw_k\cdot\bm_1&\hdots&\bw_k\cdot\bm_k\,
\end{pmatrix}.\]
Since $\aa\in[\bom]^\perp$, we know $\bw_i\cdot \aa=0$ for $1\le i\le k$, and because $\aa$ is a unit vector, it follows that 
\[(\aa\wedge\bom)\cdot(\aa\wedge\bmu)=
\det\begin{pmatrix}
1& \aa\cdot \bm_1&\hdots&\aa\cdot \bm_k \\
0&\bw_1\cdot\bm_1&\hdots&\bw_1\cdot\bm_k\\
\vdots &\vdots&\ddots&\vdots\\
0&\bw_k\cdot\bm_1&\hdots&\bw_k\cdot\bm_k\,
\end{pmatrix}.\]
Computing the determinant using minors gives the desired result. 
\end{proof}

\begin{proposition}\label{tusimpmv}
Consider $\bom\in\hat\Lambda_u^k(\Real^n)$ and $\ba\in[\bom]^\perp$. For any $\bnu\in\Lambda^{k-1}([\bom])$, we have
\beqn
\ba\wedge\bnu\in T_{\bom} \hat\Lambda_u^k(\Real^n),
\eeqn
meaning that $\ba\wedge\bnu$ is in the tangent space of the manifold $\hat\Lambda_u^k(\Real^n)$ at $\bom$.
\end{proposition}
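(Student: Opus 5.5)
The plan is to exhibit $\ba\wedge\bnu$ as the velocity of an explicit smooth curve in $\hat\Lambda_u^k(\Real^n)$ through $\bom$. Because the tangent-space membership is linear in $\bnu$, it suffices to treat a basis of $\Lambda^{k-1}([\bom])$. We may also assume $\ba\neq\bzero$; the case $\ba=\bzero$ is trivial.

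Write $\bom=\bw_1\wedge\dots\wedge\bw_k$ with $\{\bw_1,\dots,\bw_k\}$ an orthonormal basis of $[\bom]$. The $(k-1)$-vectors $\bw_i\lrcorner\bom$, $i=1,\dots,k$, are, up to sign, the wedges of all but one of the $\bw_j$. They therefore form a basis of $\Lambda^{k-1}([\bom])$, which has dimension $k$. It thus suffices to show that $\ba\wedge(\bw_i\lrcorner\bom)\in T_{\bom}\hat\Lambda_u^k(\Real^n)$ for each $i$.

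Fix $i$ and put $\bb=\ba/|\ba|$, a unit vector orthogonal to every $\bw_j$. Consider the rotated vector
\[
\bw_i(t)\coloneqq\cos t\,\bw_i+\sin t\,\bb ,
\]
and define the curve
\[
\bom(t)\coloneqq \bw_1\wedge\dots\wedge\bw_{i-1}\wedge\bw_i(t)\wedge\bw_{i+1}\wedge\dots\wedge\bw_k .
\]
The vectors $\bw_1,\dots,\bw_i(t),\dots,\bw_k$ remain orthonormal for every $t$, since $\bb\perp\bw_j$ for all $j$. Hence $\bom(t)$ is a unit simple $k$-vector by \eqref{kvip}, and $\bom(0)=\bom$.

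By multilinearity of the wedge product, $\bom'(0)$ is the $k$-vector obtained from $\bom$ by replacing $\bw_i$ with $\bb$. By \eqref{replacewedgeID} this equals $\bb\wedge(\bw_i\lrcorner\bom)$. Multiplying by $|\ba|$ shows that $\ba\wedge(\bw_i\lrcorner\bom)$ is a tangent vector at $\bom$. Summing over the basis, using that the tangent space is a vector space, gives the claim for general $\bnu$.

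The one point needing care is the identification of tangent vectors. We regard $\hat\Lambda_u^k(\Real^n)$ as a smooth submanifold of $\Lambda^k(\Real^n)$, so that $T_{\bom}$ consists of velocities of smooth curves in the manifold. The curve above is smooth as a map into $\Lambda^k(\Real^n)$, since it is polynomial in $\cos t$ and $\sin t$, so its velocity is legitimately a tangent vector. No step here looks like a real obstacle; the main thing to verify is the basis claim for $\Lambda^{k-1}([\bom])$, which follows from the dimension count above.
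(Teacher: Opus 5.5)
Your proof is correct and follows essentially the paper's route: reduce by linearity to a spanning set of $\Lambda^{k-1}([\bom])$, then realize $\ba\wedge\bnu$ as the velocity at $0$ of the unit simple curve obtained by rotating one unit vector of $[\bom]$ toward $\ba/|\ba|$. The only cosmetic difference is that the paper takes an arbitrary unit simple $\bnu$ and writes $\bom=\bb\wedge\bnu$, whereas you use the explicit basis $\bw_i\lrcorner\bom$ together with \eqref{replacewedgeID}.
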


\begin{proof}
It suffices to prove the result for a collection of $\bnu$ that span $\Lambda^{k-1}([\bom])$. Thus, we need only consider the case where $\bnu\in\hat\Lambda_u^{k-1}([\bom])$, so that $\bnu$ is a simple $(k-1)$-vector of unit magnitude. Similarly, we may assume that $\ba$ is a unit vector. As $\bom$ and $\bnu$ are both simple multivectors and $[\bnu]\subseteq[\bom]$, there is a $\bb\in[\bnu]^\perp$ such that $\bom=\bb\wedge\bnu$. Moreover, as $\bom$ and $\bnu$ have unit magnitude, $\bb$ must have unit magnitude. Consider the curve $\hat\ba:\Real\rightarrow\Real^n$ defined by $\hat\ba(s)\coloneqq \cos(s)\bb+\sin(s)\ba$ and the curve $\hat\bom:\Real\rightarrow \Lambda^k(\Real^n)$ defined by $\hat\bom(s)=\hat\ba(s)\wedge\bnu$. One can readily check that $|\hat\bom(s)|=1$ and $\hat\bom(s)$ is simple for all $s\in\Real$, so $\hat\bom$ is a curve in $\hat\Lambda^{k}_u(\Real^n)$. As $\hat\bom(0)=\bom$, it follows that
\beqn
\hat\bom'(0)=\ba\wedge\bnu\in T_{\bom} \hat\Lambda_u^k(\Real^n),
\eeqn
which establishes the claim.
\end{proof}

\begin{proposition}\label{magproj}
If $\bmu\in\hat\Lambda^k(\Real^n)$ and $\bom\in\hat\Lambda^\ell(\Real^n)$, with $k\leq \ell$, then $|\bmu\lrcorner\bom|=|\bom||\bP_{\bom}\bmu|$.
\end{proposition}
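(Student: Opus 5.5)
Both sides are homogeneous in $\bom$: the left-hand side is linear in $\bom$, and the right-hand side is $|\bom|$ times a quantity depending only on $[\bom]$. So I will first reduce to $\bom\in\hat\Lambda^\ell_u(\Real^n)$ (the case $\bom=\bzero$ being trivial). Then I choose an orthonormal basis $\bw_1,\dots,\bw_\ell$ of $[\bom]$ with $\bom=\bw_1\wedge\dots\wedge\bw_\ell$.

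The key step is to show
\[
\bmu\lrcorner\bom=(\bP_{\bom}\bmu)\lrcorner\bom.
\]
Write $\bmu=\bm_1\wedge\dots\wedge\bm_k$ and split each $\bm_i=\bP_{\bom}\bm_i+\bP^\perp_{\bom}\bm_i$, expanding multilinearly. Every term containing at least one factor $\bP^\perp_{\bom}\bm_i$ contributes zero after interior product with $\bom$. Indeed, by \eqref{moveuk} we may move that factor to the front (up to sign) and obtain $\blambda\lrcorner(\ba\lrcorner\bom)$ with $\ba\in[\bom]^\perp$; since $[\bom]^\perp=\{\ba:\ba\lrcorner\bom=\bzero\}$, this vanishes. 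Alternatively, one can check this directly from the defining relation \eqref{movingdotL}: the quantity $(\ba\wedge\blambda\wedge\bnu)\cdot\bom$ is a determinant with a zero row. Thus $\bmu\lrcorner\bom=(\bP_{\bom}\bmu)\lrcorner\bom$, where $\bP_{\bom}\bmu$ is the induced map \eqref{Lkvector}. Since $\bP_{\bom}\bmu=\bP_{\bom}\bm_1\wedge\dots\wedge\bP_{\bom}\bm_k$, it is a simple $k$-vector lying in $\Lambda^k([\bom])$.

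It therefore remains to show that $|\bnu\lrcorner\bom|=|\bnu|$ for a simple $\bnu\in\Lambda^k([\bom])$ and unit $\bom$. If $\bnu=\bzero$ there is nothing to prove. Otherwise, the basis of $[\bom]$ can be chosen adapted to $\bnu$: pick an orthonormal basis $\bw_1,\dots,\bw_k$ of $[\bnu]$ and extend it to an orthonormal basis $\bw_1,\dots,\bw_\ell$ of $[\bom]$ with $\bom=\bw_1\wedge\dots\wedge\bw_\ell$, after possibly flipping the sign of $\bw_\ell$, which does not affect magnitudes. Then $\bnu=c\,\bw_1\wedge\dots\wedge\bw_k$ with $|c|=|\bnu|$. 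Using \eqref{moveuk} repeatedly, together with the single-vector computation illustrated before \eqref{replacewedgeID}, gives
\[
\bnu\lrcorner\bom=\pm c\,\bw_{k+1}\wedge\dots\wedge\bw_\ell,
\]
which has magnitude $|c|$. Alternatively, one can test against the basis $\bw_I$ of $\Lambda^{\ell-k}$ via \eqref{movingdotL}: $(\bnu\wedge\bw_I)\cdot\bom$ is nonzero only when $I=\{k+1,\dots,\ell\}$.

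The main obstacle is the bookkeeping in the first step, namely verifying that any factor from $[\bom]^\perp$ kills the interior product. This requires checking the order and sign conventions of \eqref{moveuk} carefully, although the signs do not matter for the final magnitude. I would handle it through the dot-product characterization \eqref{movingdotL}, which reduces to a determinant with a vanishing row.
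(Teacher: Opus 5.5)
Your proof is correct, but it takes a different route from the paper's.

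The paper inducts on $k$. It quotes the case $k=1$ from \cite{MS25} and factors $\bmu=\ba\wedge\blambda$ with $\ba\lrcorner\blambda=\bzero$. It then rewrites $\bmu\lrcorner\bom=\blambda\lrcorner(\ba\lrcorner\bom)$ by \eqref{moveuk}, applies the induction hypothesis to the $(\ell-1)$-vector $\ba\lrcorner\bom$ and the base case to $|\ba\lrcorner\bom|$. Finally it reassembles $|\bP_{\bom}\ba|^2|\bP_{\ba\lrcorner\bom}\blambda|^2$ into $|\bP_{\bom}(\ba\wedge\blambda)|^2$ using Lagrange's identity \eqref{Lagrange} and a basis of $[\bom]$ adapted to $\ba$.

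You instead argue directly in two steps. First, $\bmu\lrcorner\bom=(\bP_{\bom}\bmu)\lrcorner\bom$, because every factor from $[\bom]^\perp$ is annihilated. Second, for unit $\bom=\bw_1\wedge\dots\wedge\bw_\ell$, the interior product sends $\bw_1\wedge\dots\wedge\bw_k$ to $\pm\bw_{k+1}\wedge\dots\wedge\bw_\ell$.

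What your route buys:
\begin{itemize}
\item It is self-contained: no appeal to the $k=1$ lemma of \cite{MS25} or to \eqref{Lagrange}.
\item It shows the mechanism: on $\Lambda^k([\bom])$ the map $\bnu\mapsto\bnu\lrcorner\bom$ sends each basis element $\bw_I$ to $\pm\bw_{I^c}$, so it is an isometry.
\item It generalizes. If you run your second step with the expansion $\bnu=\sum_I c_I\bw_I$ instead of a basis adapted to $[\bnu]$, both steps are linear in $\bmu$. You then get the identity for non-simple $\bmu\in\Lambda^k(\Real^n)$, which the paper's factorization $\bmu=\ba\wedge\blambda$ does not give directly.
\item It bypasses the most delicate point of the paper's inductive step. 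The paper asserts $\bP_{\ba\lrcorner\bom}\blambda=\bP_{\bom}\blambda$, but this does not follow from $\ba\lrcorner\blambda=\bzero$. For $\bom=\be_1\wedge\be_3$, $\ba=(\be_1+\be_2)/\sqrt{2}$, $\blambda=(\be_1-\be_2)/\sqrt{2}$, the left side is $\bzero$ and the right side is $\be_1/\sqrt{2}$. The two differ factorwise by multiples of $\bP_{\bom}\ba$, so what is true, and all that is needed, is equality after wedging with $\bP_{\bom}\ba$.
\end{itemize}

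The paper's approach, for its part, stays within the identity calculus of Section~2 and reuses an already established special case rather than expanding in a basis.
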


\begin{proof}
The proof is by induction on $k$. The $k=1$ case was established by Mihaila and Seguin \cite{MS25}[Lemma~1]. Now assume that 
\beqn\label{Ihypoth}
|\blambda\lrcorner\bom|=|\bom||\bP_{\bom}\blambda|\quad\text{for}\ \blambda\in\hat\Lambda^{k-1}(\Real^n).
\eeqn
Given $\bmu\in\hat\Lambda^k(\Real^n)$, find $\ba\in\Real^n$ and $\blambda\in\hat\Lambda^{k-1}(\Real^n)$ such that
\beqn\label{inducdecomp}
\ba\lrcorner\blambda=\bzero\quad\text{and}\quad \bmu=\ba\wedge\blambda.
\eeqn
It follows from \eqref{moveuk}, the induction hypothesis \eqref{Ihypoth}, \eqref{inducdecomp}, and Lagrange's identity \eqref{Lagrange} that
\begin{align*}
|\bmu\lrcorner\bomega|^2&=|(\ba\wedge\blambda)\lrcorner\bom|^2\\
&=|\blambda\lrcorner(\ba\lrcorner\bom)|^2\\
&=|\ba\lrcorner\bom|^2|\bP_{\ba\lrcorner\bom}\blambda|^2\\
&=|\bom|^2||\bP_{\bom}\ba|^2|\bP_{\ba\lrcorner\bom}\blambda|^2\\
&=|\bom|^2[|\bP_{\bom}\ba\wedge \bP_{\ba\lrcorner\bom}\blambda|^2+|\bP_{\bom}\ba\lrcorner \bP_{\ba\lrcorner\bom}\blambda|^2].
\end{align*}
One can find an orthonormal basis $\{\bw_1,\dots,\bw_\ell\}$ of $[\bom]$ such that $\bom=\bw_1\wedge\dots\wedge\bw_\ell$ and $\ba\cdot\bw_i=0$ for $i>1$. It follows that
\beqn
\bP_{\bom}\ba=(\ba\cdot\bw_1)\bw_1\quad\text{and}\quad \bP_{\ba\lrcorner\bom}\blambda\in\hat\Lambda^{k-1}([\bw_2\wedge\dots\wedge\bw_k]).
\eeqn
Thus, $\bP_{\bom}\ba\lrcorner \bP_{\ba\lrcorner\bom}\blambda=\bzero$. Moreover, from \eqref{inducdecomp}$_1$,
\beqn
\bP_{\ba\lrcorner\bom}\blambda=\bP_{\bom}\blambda.
\eeqn
This means that $\bP_{\bom}\ba\wedge \bP_{\ba\lrcorner\bom}\blambda=\bP_{\bom}\ba\wedge\bP_{\bom}\blambda=\bP_{\bom}(\ba\wedge\blambda)$, which completes the proof.
\end{proof}

\begin{proposition}\label{wedgenorm}
For $\ba\in\cU(\Real^n)$, $\bom\in\hat\Lambda^k_u(\Real^n)$ with $\ba\lrcorner\bom=\bzero$, and $\bmu\in\hat\Lambda^k_u(\Real^n)$, it is true that
\beqn
|(\ba\wedge\bom)\llcorner\bmu|^2=(\bom\cdot\bmu)^2+|\bP_{\bom}(\ba\lrcorner\bmu)|^2.
\eeqn
\end{proposition}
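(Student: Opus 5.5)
The plan is to compute the vector $\bv\coloneqq(\aa\wedge\bom)\llcorner\bmu$ componentwise in an orthonormal basis adapted to $\aa$ and $\bom$, and then use Parseval. By \eqref{movingdotR}, for every $\bb\in\Real^n$,
\[
\bb\cdot\bv=(\bb\wedge\bmu)\cdot(\aa\wedge\bom).
\]
Since $\bom$ is a unit simple $k$-vector, write $\bom=\be_1\wedge\dots\wedge\be_k$ with $\{\be_1,\dots,\be_k\}$ orthonormal. Set $\be_0\coloneqq\aa$, which is a unit vector orthogonal to $[\bom]$ because $\aa\lrcorner\bom=\bzero$. Complete this to an orthonormal basis $\{\be_0,\dots,\be_{n-1}\}$ of $\Real^n$.

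\textbf{Step 1 (support of $\bv$).} If $\bb\perp\be_0,\dots,\be_k$, then the first row of the Gram determinant \eqref{kvip} for $(\bb\wedge\bmu)\cdot(\aa\wedge\bom)$ vanishes. Hence $\bv\in\mathrm{span}\{\be_0,\dots,\be_k\}$, and
\[
|\bv|^2=\sum_{i=0}^k(\be_i\cdot\bv)^2.
\]

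\textbf{Step 2 (the $\aa$-component).} We have $\be_0\cdot\bv=(\aa\wedge\bmu)\cdot(\aa\wedge\bom)$. By the computation in Proposition~\ref{dotreduction} (whose displayed conclusion should read $\bom\cdot\bmu$), this equals $\bom\cdot\bmu$. This gives the first term of the claimed identity.

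\textbf{Step 3 (the $[\bom]$-components).} Fix $1\le i\le k$. Using the symmetric counterpart of \eqref{movingdotL}, namely $(\be_i\wedge\bmu)\cdot\blambda=\bmu\cdot(\be_i\lrcorner\blambda)$, we get
\[
\be_i\cdot\bv=\bmu\cdot\big(\be_i\lrcorner(\aa\wedge\bom)\big).
\]
The anticommutation relation \eqref{anticom}, together with $\be_i\cdot\aa=0$, gives
\[
\be_i\lrcorner(\aa\wedge\bom)=-\aa\wedge(\be_i\lrcorner\bom).
\]
Applying \eqref{movingdotL} once more then yields
\[
\be_i\cdot\bv=-(\be_i\lrcorner\bom)\cdot(\aa\lrcorner\bmu).
\]
By the explicit formula illustrated before \eqref{replacewedgeID}, $\be_i\lrcorner\bom=\pm\,\be_1\wedge\dots\wedge\widehat{\be_i}\wedge\dots\wedge\be_k$. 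As $i$ runs from $1$ to $k$, these elements form an orthonormal basis of $\Lambda^{k-1}([\bom])$.

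\textbf{Step 4 (identify the projection).} It remains to show that the induced map $\bP_{\bom}$ on $\Lambda^{k-1}(\Real^n)$, defined through \eqref{Lkvector}, is the orthogonal projection onto $\Lambda^{k-1}([\bom])$. To check this, apply it to the orthonormal basis of wedges $\be_{j_1}\wedge\dots\wedge\be_{j_{k-1}}$. If every index lies in $\{1,\dots,k\}$, the wedge is fixed. Otherwise some factor is sent to $\bzero$, so the wedge is killed. Consequently
\[
\sum_{i=1}^k\big((\be_i\lrcorner\bom)\cdot(\aa\lrcorner\bmu)\big)^2=|\bP_{\bom}(\aa\lrcorner\bmu)|^2.
\]
Combining this with Steps 1--3 proves the proposition.

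The main obstacle I expect is bookkeeping in Step 3: keeping the interior-product conventions and signs straight when moving the dot product across $\lrcorner$ and $\llcorner$. Because only squares of the components enter the final sum, any sign errors there are harmless.
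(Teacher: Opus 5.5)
Your proof is correct, but it takes a different route from the paper's in how the $[\bom]$-part is handled.

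\textbf{The paper's route.} The paper works coordinate-free. It uses \eqref{movingdotR}, \eqref{movingdotL} and \eqref{anticom} to derive the vector identity $(\ba\wedge\bom)\llcorner\bmu=(\bom\cdot\bmu)\ba-\bom\llcorner(\ba\lrcorner\bmu)$. It then checks that the two terms are orthogonal, using $\ba\lrcorner\bom=\bzero$. Finally it evaluates $|\bom\llcorner(\ba\lrcorner\bmu)|$ via \eqref{LRswitch} and Proposition~\ref{magproj}. That proposition is proved by induction and, as stated, needs $\ba\lrcorner\bmu$ to be simple.

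\textbf{Your route.} You use the same anticommutation step, but componentwise, in an orthonormal basis with $\be_0=\ba$ and $\bom=\be_1\wedge\dots\wedge\be_k$. You then replace the appeal to Proposition~\ref{magproj} with a direct Parseval argument. The induced map $\bP_{\bom}$ on $\Lambda^{k-1}(\Real^n)$ is diagonal in the wedge basis with entries $0$ and $1$, so it is the orthogonal projection onto $\Lambda^{k-1}([\bom])$. That space has orthonormal basis $\{\be_i\lrcorner\bom\}_{i=1}^k$.

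\textbf{What each buys.} Your argument is more elementary and self-contained: no induction, and no need for $\ba\lrcorner\bmu$ to be simple. If Steps~1--2 are also done through \eqref{anticom} instead of Gram determinants, every step is linear in $\bmu$. The identity then holds for all $\bmu\in\Lambda^k(\Real^n)$. Concretely, $\bb\lrcorner(\ba\wedge\bom)=\bzero$ for $\bb\perp\be_0,\dots,\be_k$, and $\ba\lrcorner(\ba\wedge\bom)=\bom$. The paper's route gives something yours does not: the explicit decomposition of the vector $(\ba\wedge\bom)\llcorner\bmu$ itself, not just its norm.

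\textbf{Two small remarks.} The identity you call the ``symmetric counterpart'' of \eqref{movingdotL} is just \eqref{movingdotL} with $\bnu=\be_i$ and $\bomega=\bmu$. You are right that the conclusion of Proposition~\ref{dotreduction} should read $\bom\cdot\bmu$; the printed $\bom\wedge\bmu$ is a typo.
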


\begin{proof}
Notice that for any $\bu\in\Real^n$, by the definition of the right and left interior-products and \eqref{anticom},
\begin{align}
(\ba\wedge\bom)\llcorner\bmu\cdot\bu&=\ba\wedge\bom\cdot\bu\wedge\bmu\\
&=\bu\lrcorner(\ba\wedge\bom)\cdot\bmu\\
&=[(\bu\cdot\ba)\bom-\ba\wedge(\bu\lrcorner\bom)]\cdot\bmu\\
&=(\bom\cdot\bmu)(\ba\cdot\bu)-\bom\cdot\bu\wedge(\ba\lrcorner\bmu)\\
&=[(\bom\cdot\bmu)\ba-\bom\llcorner(\ba\lrcorner\bmu)]\cdot\bu.
\end{align}
It follows that
\beqn\label{magnormid}
(\ba\wedge\bom)\llcorner\bmu=(\bom\cdot\bmu)\ba-\bom\llcorner(\ba\lrcorner\bmu).
\eeqn
Notice that the two terms on the right-hand side of \eqref{magnormid} are orthogonal since from the assumption $\ba\lrcorner\bom=\bzero$,
\begin{align}
\bom\llcorner(\ba\lrcorner\bmu)\cdot\ba&=\bom\cdot\ba\wedge(\ba\lrcorner\bmu)\\
&=\ba\lrcorner\bom\cdot\ba\lrcorner\bmu\\
&=0.
\end{align}
It then follows from Proposition~\ref{magproj} and \eqref{LRswitch} that
\beqn
|(\ba\wedge\bom)\llcorner\bmu|^2=(\bom\cdot\bmu)^2+|\bom\llcorner(\ba\lrcorner\bmu)|^2=(\bom\cdot\bmu)^2+|\bP_{\bom}(\ba\lrcorner\bmu)|^2.
\eeqn
\end{proof}

\smallskip
\section{Set of disks}\label{sec:3}
\setcounter{section}{3} \setcounter{equation}{0} 

In this section we discuss the set of $(n-k)$-dimensional disks in $\Real^n$ used to define the fractional $k$-dimensional measure. Moreover, several measure theoretic properties of these disks will be established that will be of use in this work.  We find it convenient to work with oriented disks, so for the rest of the paper a disk refers to an oriented disk.

Each $(n-k)$-dimensional disk can be represented by its center $p\in\Real^n$, radius $r>0$, and perpendicular $k$-dimensional (oriented) subspace represented by $\bom\in \hat{\Lambda}^k_u(\R^{n})$. The disk with center $p$, radius $r$, and orientation $\bomega$ is denoted by
\[
D(p,\bom, r)\coloneqq\{p+\xi \mathbf{v}\in\R^n:\mathbf{v}\in\cU([\bomega]^\perp), \xi\in [0,r)\}.
\]
Thus, the set of all disks can be described by
\[
\D\coloneqq\R^n\times \hat{\Lambda}^k_u(\R^{n})\times \R^+.
\]
The dimension of this set is denoted by $d\coloneqq \text{dim}\,\cD=n+k(n-k)+1$. By the boundary $\pa D(p,\bom, r)$ of a disk $D(p,\bom,r)$ we mean the $(n-k-1)$-dimensional manifold 
\[
\pa D(p,\bom, r)\coloneqq\{p+r \mathbf{v}\in\R^n: \mathbf{v}\in\cU([\bomega]^\perp)\}.
\]
We denote by $\overline{D}(p,\bom,r)$ the closure of the disk $D(p,\bom,r)$.

Consider a $k$-dimensional $C^1$ manifold with boundary, where $1\leq k\leq n-1$, which we denote by $\overline\cM$. $\cM$ will denote the interior of the manifold and $\partial\cM$ its boundary, so that $\overline\cM=\cM\cup\partial\cM$. Let $\vol$ denote a volume form for $\overline\cM$, meaning that for each $z\in\overline\cM$, $\vol(z)\in\hat\Lambda^k_u(\Real^n)$ and $[\vol(z)]=T_z\overline\cM$.  It is not assumed that $\overline\cM$ is orientable, so it is possible that $\vol$ is not continuous.  Define the following subsets of $\D$: 
\begin{align*}
\D_{\pa \M}&\coloneqq\{(p,\bom,r)\in\D:\H^0(\overline{D}(p,\bom,r)\cap \pa \M)\neq 0 \},\\
\D_{\tan}&\coloneqq\{(p,\bom,r)\in \D: \text{there is } z\in \overline{D}(p,\bom,r)\cap \overline{\M}\text{ such that } \bomega\cdot\vol(z)=0\},\\
\D_\infty&\coloneqq\{(p,\bom,r)\in \D: \H^0(\overline{D}(p,\bom,r)\cap \overline{\M})=\infty\},\\ 
\D_{\pa D}&\coloneqq\{(p,\bom,r)\in \D: \H^0(\pa D(p,\bom,r)\cap \M)\neq0\}.
\end{align*}
Roughly speaking, the interpretation of these sets is as follows: $\cD_{\partial \cM}$ consists of those disks whose closure intersect $\partial\cM$, $\cD_\text{tan}$ consists of those disks that are tangent---that is, not transversal---to the manifold, $\cD_\infty$ is the collection of disks that intersect $\overline\cM$ an infinite number of times, and $\cD_{\partial D}$ consists of those disks whose boundary intersects $\cM$. 

Mihaila and Seguin \cite{MS25}[Lemma~2] established the following facts about these sets.
\begin{lemma}\label{lemmeasure}
Given an open, bounded set $\cE\subseteq\cD$, the following results hold:
\begin{enumerate}
\item $\H^{d-1}(\cD_{\pa \M}\cap\E)<\infty$,\label{paM}
\item $\H^{d-1}(\cD_{\rm{tan}}\cap\E)<\infty$,\label{tan}
\item$\H^{d-1}(\cD_{\pa D}\cap\E)<\infty$,\label{paD} 
\item$\D_\infty\subseteq\D_{\text{\rm tan}}$.\label{inftan} 
\end{enumerate}
\end{lemma}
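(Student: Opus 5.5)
The lemma is quoted from \cite{MS25}, but here is how I would prove it. The common tool is that a Lipschitz image of a set of dimension $d-1$ has finite $\H^{d-1}$ measure on bounded pieces. In each of items 1--3, I parametrize the relevant set of disks by a Lipschitz map defined on a bounded subset of a $(d-1)$-dimensional space. Boundedness of $\E$ lets me restrict attention to centers $p$ in a compact set and radii $r\in[r_0,R]$ with $r_0>0$. Since $\overline\cM$ is $C^1$, it can be covered by finitely many $C^1$ charts on the relevant compact region.

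\textbf{Item 1.} A disk in $\cD_{\pa\M}$ has its closure meeting $\pa\M$ at some point $z=p+\xi\bv$, with $\bv\in\cU([\bom]^\perp)$ and $\xi\in[0,r]$. I would use the map
\[
(z,\bom,\bv,\xi,r)\mapsto(z-\xi\bv,\bom,r),
\]
where $z$ ranges over $\pa\M$. The domain has dimension
\[
(k-1)+k(n-k)+(n-k-1)+1+1=d-1,
\]
and the map is Lipschitz on bounded sets. This gives item 1.

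\textbf{Item 3.} This is the same argument with $\xi=r$ fixed and $z$ ranging over $\M$. The dimension count is $k+k(n-k)+(n-k-1)+1=d-1$.

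\textbf{Item 2.} Here the parametrization must encode the tangency. A disk in $\cD_{\tan}$ contains some $z\in\overline\M$ with $\bom\cdot\vol(z)=0$. By the geometric interpretation of orthogonal simple $k$-vectors, this means $[\bom]^\perp$ meets $T_z\M$ nontrivially. For fixed $z$, the set of such $\bom$ is a codimension-one subvariety of $\hat\Lambda^k_u(\Real^n)$. I would parametrize it concretely: choose a unit $\bt\in T_z\M$, then choose $\bom\in\hat\Lambda^k_u(\{\bt\}^\perp)$, and map
\[
(z,\bt,\bom,\bv,\xi,r)\mapsto(z-\xi\bv,\bom,r).
\]
This domain has excess dimension, so I would count more carefully. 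Choosing $\bt$ contributes $k-1$ parameters, and $\bom\in\hat\Lambda^k_u(\{\bt\}^\perp)$ contributes $k(n-1-k)$. The resulting family of $\bom$ has dimension at most $k(n-k)-1$, since the tangent condition is a single equation, and I would fiber over it accordingly. Alternatively, I would bound $\H^{d-1}$ using the coarea/Eilenberg inequality applied to the function $(p,\bom,r,z)\mapsto\bom\cdot\vol(z)$. This item is the main obstacle: obtaining an honest $(d-1)$-dimensional Lipschitz parametrization of the tangency set, not just one of dimension $\le d$. I would try a stratification by $\dim([\bom]^\perp\cap T_z\M)$. The top stratum, where this dimension is $1$, has the right dimension; the higher strata are lower-dimensional.

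\textbf{Item 4.} Suppose $\overline D(p,\bom,r)$ meets $\overline\M$ in infinitely many points. By compactness these points accumulate at some $z\in\overline D\cap\overline\M$. Difference quotients of the accumulating points converge to a unit vector that lies both in $T_z\overline\M$ (by $C^1$ regularity) and in $[\bom]^\perp$ (the directions within the closed disk's affine plane). Therefore $[\bom]^\perp\cap T_z\overline\M\neq\{0\}$, which is exactly $\bom\cdot\vol(z)=0$. So the disk lies in $\cD_{\tan}$.
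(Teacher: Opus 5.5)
\textbf{Items 1, 3 and 4.} These are fine. For Items 1 and 3 you are implicitly using that $\overline{\cM}$ is compact, so that $\partial\cM$ and $\cM$ have finite $\H^{k-1}$- and $\H^k$-measure in bounded regions. Also, boundedness of $\cE$ bounds $r$ from above but gives no $r_0>0$ from below. You do not need one, since your maps are smooth up to $r=0$ with $0\le\xi\le r$.

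\textbf{Item 2 is the genuine gap.} You leave it open, and the obstacle you name is not the real one. Your concrete domain $\{(z,\bt,\bom,\bv,\xi,r): z\in\overline{\cM},\ \bt\in\cU(T_z\overline{\cM}),\ \bom\in\hat\Lambda^k_u(\{\bt\}^\perp),\ \bv\in\cU([\bom]^\perp),\ 0\le\xi\le r\}$ covers $\cD_{\rm tan}$, because $\bom\cdot\vol(z)=0$ iff $[\bom]^\perp\cap T_z\overline{\cM}\neq\{\bzero\}$. Its dimension is exactly $k+(k-1)+k(n-1-k)+(n-k-1)+2=d-1$, so there is no excess and no need to stratify. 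The bound $\H^{d-1}(f(A))\le ({\rm Lip}\,f)^{d-1}\H^{d-1}(A)$ does not require injectivity, so over-parametrizing where $\dim([\bom]^\perp\cap T_z\cM)\ge 2$ is harmless. Your Eilenberg alternative also cannot work as stated. That inequality controls the integral of slice measures over all levels of $\bom\cdot\vol(z)$, not the measure of the single level $\{\bom\cdot\vol(z)=0\}$.

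What is actually missing is Lipschitz dependence on $z$. The fibres $\cU(T_z\overline{\cM})$ and $\hat\Lambda^k_u(\{\bt\}^\perp)$ move with $z$ exactly as the tangent plane does. For a $C^1$ manifold, $z\mapsto T_z\overline{\cM}$ is merely continuous, so your parametrization is merely continuous and the Lipschitz-image principle does not apply.

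This is not a technicality. Take $n=2$, $k=1$: then $\cD_{\rm tan}$ contains $\{(\gamma(s)+\xi\bt(s),\bn(s),r):|\xi|\le r\}$. Eilenberg's inequality for the projection $(p,\bom,r)\mapsto(\bom,p\cdot\bom,r)$ bounds its $\H^3$-measure from below by a multiple of the $\H^1$-measure of the curve of tangent lines $s\mapsto(\bn(s),\gamma(s)\cdot\bn(s))$. That measure is infinite for suitable $C^1$ (even $C^{1,\alpha}$, $\alpha<1$) curves whose tangent angle has infinite total variation. So Item 2 requires a hypothesis beyond $C^1$, e.g.\ $\overline{\cM}\in C^{1,1}$.

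With that hypothesis your parametrization works as it stands:
\begin{itemize}
\item choose a locally Lipschitz orthonormal frame $\bt_1(z),\dots,\bt_k(z)$ of $T_z\overline{\cM}$ and set $\bt=\sum_i s_i\bt_i(z)$ with $(s_1,\dots,s_k)\in S^{k-1}$;
\item fix a unit $\be$ and a rotation $R_{\bt}$, depending smoothly on $\bt$ (locally), with $R_{\bt}\be=\bt$;
\item use $(z,s_1,\dots,s_k,\bv_0,\bom_0,\xi,r)\mapsto(z-\xi R_{\bt}\bv_0,R_{\bt}\bom_0,r)$, with $(\bv_0,\bom_0)$ ranging over the fixed set $\{(\bv_0,\bom_0)\in\cW:\be\in[\bom_0]^\perp\}$.
\end{itemize}
This map is Lipschitz on bounded sets and its domain has dimension $d-1$, which gives Item 2.
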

In this work we require several additional subsets of disks and results about them.  Namely, consider the sets
\begin{align}
\D_{\pa \M 1}&\coloneqq\{(p,\bom,r)\in\D:\H^0(\overline{D}(p,\bom,r)\cap \pa \M)=1 \},\\
\D_{\pa \M 2}&\coloneqq\{(p,\bom,r)\in\D:\H^0(\overline{D}(p,\bom,r)\cap \pa \M)\geq 2 \},\\
\D_{\pa D 1}&\coloneqq\{(p,\bom,r)\in \D: \H^0(\pa D(p,\bom,r)\cap \M)= 1\},\\
\D_{\pa D 2}&\coloneqq\{(p,\bom,r)\in \D: \H^0(\pa D(p,\bom,r)\cap \M)\geq 2\},\\
\label{Dodd}\D_{\rm odd}&\coloneqq\{(p,\bom,r)\in \D\setminus(\cD_{\partial\cM}\cup\cD_{\partial D}\cup\cD_{\rm{tan}})  : \H^0( D(p,\bom,r)\cap \M)\ \text{is odd}\},\\
\D_{\rm even}&\coloneqq\{(p,\bom,r)\in \D\setminus(\cD_{\partial\cM}\cup\cD_{\partial D}\cup\cD_{\rm{tan}}): \H^0( D(p,\bom,r)\cap \M)\ \text{is even}\}.
\end{align}
The set $\cD_{\rm odd}$ is exactly the set $\cD(\cM)$ used to define the fractional $k$-dimensional measure; see \eqref{smeaskintro}. Notice that $\cD_{\partial\cM}=\cD_{\partial\cM1}\cup\cD_{\partial\cM2}$ and $\cD_{\partial D}=\cD_{\partial D1}\cup\cD_{\partial D2}$. We will make use of the set
\beqn\label{Wdef}
\W\coloneqq\{(\mathbf{a},\bom)\in \cU(\R^n)\times \hat{\Lambda}^k_u(\R^{n}): \mathbf{a}\in [\bom]^\perp\}.
\eeqn
Each element in $\W$ gives a direction $\mathbf{a}$ and a unit magnitude simple $k$-vector $\bom$, which determines a space that is perpendicular to $\mathbf{a}$.  Put another way, $\bom$ is a $k$-vector in the $(n-1)$-dimensional space $[\ba]^\perp$.  Therefore, the dimension of $\W$ is $w\coloneqq \text{dim}\,\cW=n-1+k(n-1-k)$.

\begin{lemma}\label{lemmeasure2}
The following results hold:
\begin{enumerate}
\item $\H^{d-1}(\D_{\pa D2 })=0$,\label{paD2}
\item $\H^{d-1}(\D_{\pa \cM 2})=0$,\label{paM2}
\item $\H^{d-1}(\D_{\pa \M}\cap \D_{\pa D})=0,$\label{paMD} 
\item $\D_{\rm{odd}}$ and $\D_{\rm{even}}$ are open subsets of $\D$, \label{eo} 
\item $\D=\D_{\rm{odd}}\cup \D_{\rm{even}}\cup \D_{\pa \M}\cup \D_{\rm tan}\cup \D_{\pa D}$.\label{Dsep}
\end{enumerate}
\end{lemma}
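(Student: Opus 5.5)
The plan is to prove each item by a dimension count, viewing each bad set as the image of a parametrization by a manifold of dimension at most $d-2$. The tool is the fact that a locally Lipschitz image of a $(d-2)$-dimensional set has $\H^{d-1}$-measure zero. Items \ref{eo} and \ref{Dsep} are then handled by elementary topology and the counting of intersections, using Lemma~\ref{lemmeasure}\ref{inftan}.

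For \ref{paD2}, I would parametrize the disks whose boundary meets $\M$ at two distinct points $z_1\neq z_2$. A disk with $z_1,z_2\in\pa D(p,\bom,r)$ is determined by the data $(z_1,\ba_1,\bom,r,\ba_2)$, where:
\begin{itemize}
\item $z_1\in\M$ contributes $k$ dimensions;
\item $(\ba_1,\bom)\in\W$ gives the direction $z_1=p+r\ba_1$ and contributes $w$ dimensions;
\item $r>0$ contributes $1$ dimension;
\item $\ba_2\in\cU([\bom]^\perp)$ is chosen so that $z_2=z_1+r(\ba_2-\ba_1)$, which contributes $n-k-1$ dimensions.
\end{itemize}
The constraint $z_2\in\M$ has codimension $n-k$. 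The map from the constrained parameters to $\D$ is then $(z_1,\ba_1,\bom,r)\mapsto(z_1-r\ba_1,\bom,r)$. The source has dimension $k+w+1+(n-k-1)-(n-k)=d-1$, so this naive count gives the wrong answer. I would therefore refine it: $\D_{\pa D}$ itself is the image of the $(d-1)$-dimensional manifold $\M\times\W\times\R^+$. The subset where a second point $z_2$ exists is the set where the map $(z,\ba,\bom,r)\mapsto(z-r\ba,\bom,r)$ fails to be injective. Its preimage is the projection of $\{(z_1,\ba_1,\bom,r,\ba_2):\ z_1+r(\ba_2-\ba_1)\in\M,\ \ba_2\neq\ba_1\}$. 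That set has dimension $k+w+1+(n-k-1)-(n-k)=k+w=d-2$, since $d-1=k+w+1$ gives $k+w=d-2$. Its image under a Lipschitz map therefore has $\H^{d-1}$-measure zero.

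Item \ref{paM2} uses the same count with $\pa\M$ of dimension $k-1$ replacing $\M$, and with the closed disk replacing its boundary. The set $\D_{\pa\M}$ is the image of $\pa\M\times\W\times\R^+\times[0,1]$, of dimension $(k-1)+w+2=d-1$, via $(z,\ba,\bom,r,t)\mapsto(z-tr\ba,\bom,r)$. Requiring a second point of $\pa\M$ in the closed disk imposes a constraint of codimension $n-k$. It adds only $n-k$ free parameters, namely a point of the $(n-k)$-dimensional disk, minus the constraint that the point lie in $\pa\M$, which has codimension $n-k+1$ in $\R^n$. The net effect drops the dimension to $d-2$. Item \ref{paMD} is identical, with the second point lying on $\M\cap\pa D$: the parameter count gives $(k-1)+w+2+(n-k-1)-(n-k)=d-2$.

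The technical obstacle in all three items is making the ``constraint of codimension $n-k$'' rigorous when $\M$ is only $C^1$. Where the constraint is transversal, I would use local graph representations of $\M$ and the implicit function theorem. Away from transversality, the bad disks lie in $\D_{\rm tan}$, which is only known to have finite $\H^{d-1}$-measure by Lemma~\ref{lemmeasure}\ref{tan}, not zero measure. So I would handle the tangential case separately: tangency adds $k(n-k)$-type constraints, and I would verify that the intersection with $\D_{\rm tan}$ also has dimension at most $d-2$.

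For \ref{eo}, take a disk in the complement of $\D_{\pa\M}\cup\D_{\pa D}\cup\D_{\rm tan}$. By Lemma~\ref{lemmeasure}\ref{inftan} it meets $\M$ in finitely many points, all transversal, all interior to the disk, and with $\overline D$ avoiding $\pa\M$. Given this, the implicit function theorem shows that each intersection point persists and moves continuously under small perturbations of $(p,\bom,r)$. Compactness of $\overline\M$ near the closed disk excludes new intersection points, since any new point would accumulate at a tangency or at the boundary. Hence the parity is locally constant and the complement is open; one also checks that the bad sets are closed. Finally, \ref{Dsep} is immediate: outside the three bad sets the count $\H^0(D\cap\M)$ is finite, by \ref{inftan} of Lemma~\ref{lemmeasure}, so it is either odd or even.
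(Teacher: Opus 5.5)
Your route for Items~1--3 differs from the paper's. The paper chooses the two points freely, $(z_1,z_2)\in\M\times\M$ (resp.\ $\pa\M\times\pa\M$, $\pa\M\times\M$), and puts the constraint on the orientation, $(z_2-z_1)\lrcorner\bom=\bzero$. Thus $\bom$ ranges over $k$-planes in $[z_2-z_1]^\perp$, or in the orthogonal complement of both $\ba$ and $z_2-z_1$ once a direction $\ba$ from $z_1$ to the center is added. The paper then splits according to whether the center is collinear with the two points. This gives explicit locally Lipschitz formulas such as $r=|z_2-z_1|^2/(2\ba\cdot(z_2-z_1))$. Since the constraint is linear in $\bom$ and no preimage of $\M$ ever has to be computed, the counts $k+k(n-k)$ and $d-2$ are immediate.

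You instead start from the disk parametrizations $\Psi$ and $\Xi$, add the second point as extra parameters, and require it to lie in $\M$ or $\pa\M$. This treats all three items uniformly with no case split. But everything then rests on the claim that this constraint has the expected codimension, and that is the step your proposal leaves open.

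You attribute a possible failure of the codimension count to tangency. You propose to treat ``the tangential case separately'' by showing that the relevant disks in $\D_{\rm tan}$ form a set of dimension at most $d-2$. You give no argument for this, and it is just the statement to be proved, restricted to a subset. The diagnosis is also misplaced: whether the disk is tangent to $\M$ has no bearing on whether the constraint is transversal.

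In Item~1 the map $G(z_1,\ba_1,\bom,r,\ba_2)=z_1+r(\ba_2-\ba_1)$ is a submersion onto $\R^n$ wherever $\ba_2\neq\ba_1$. Rotating $(\ba_1,\bom,\ba_2)$ jointly by elements of $SO(n)$ stays in the parameter space and moves $G$ in every direction orthogonal to $\ba_2-\ba_1$. Varying $r$ moves $G$ along $\ba_2-\ba_1$. Hence $G^{-1}(\M)$ is a $C^1$ submanifold of codimension $n-k$ for any $C^1$ manifold $\M$, with no case distinction. The same rotation-plus-scaling argument applies to the maps in Items~2 and~3 wherever the two points are distinct. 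With this observation your counts go through.

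Note also that your ``naive count'' already equals $k+w=d-2$, since $d-1=k+w+1$. There is no discrepancy to refine; your second computation is the same one.

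Your treatment of Items~4 and~5 is more detailed than the paper's one-line remark and is fine, with one correction. It is the union $\D_{\pa\M}\cup\D_{\pa D}\cup\D_{\rm tan}$ that is closed. The set $\D_{\pa D}$ alone need not be, since points of $\M\cap\pa D$ can converge to $\pa\M$.
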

\begin{proof}

\textit{Item 1.} To get the control on the measure of $\cD_{\pa D2}$, we subdivide it into two subsets. Define
\begin{align*}
S_1&\coloneqq \{(p,\bom,r)\in \cD_{\pa D2}: p=\tfrac{z_1+z_2}{2} \mbox{ for some } z_1,z_2\in \cM \cap\pa D(p,\bom,r)\},\\
S_2&\coloneqq \cD_{\pa D2}\setminus S_1.
\end{align*}
The set $S_1$ consists of those disks in $\cD_{\pa D2}$ with center the midpoint between two points of intersection of $\pa D(p,\bom,r)$ and $ \cM$, and $S_2$ is the set of disks in $\cD_{\pa D2}$ with two points of $\M$ in $\pa D(p,\bom,r)$ but with center not on the line segment connecting those two points. 

Consider the set
\[
A_{\pa D2}\coloneqq\{(z_1,z_2,\bom)\in \M\times \M\times \hat{\Lambda}^k_u(\R^n): (z_2-z_1)\lrcorner\bom =\bf0\},
\]
and define $\Phi_{\pa D2}:A_{\pa D2}\rightarrow \D$ by 
\[
\Phi_{\pa D2}(z_1,z_2,\bom)\coloneqq\left(\tfrac{1}2(z_1+z_2),\bom, \tfrac{1}2|z_2-z_1|\right).
\]
Notice that $S_1\subseteq \Phi_{\pa D2}(A_{\pa D2})$. Since the map $\Phi_{\pa D2}$ is locally Lipschitz, it follows that the Hausdorff dimension of $S_1$ is at most equal to the Hausdorff dimension of $A_{\pa D2}$. The dimension of $A_{\pa D2}$ comes down to choosing $z_1,z_2\in\M$, which has dimension $k$, and $\bom\in\hat\Lambda_u^k([z_2-z_2]^\perp)$, which has dimension $k(n-1-k)$. So, the Hausdorff dimension of $A_{\pa D2}$ is $k+k(n-k)$, which is strictly less than $n+k(n-k)=d-1$.

Now consider the set
\[
A'_{\pa D2}\coloneqq\{(z_1,z_2,\mathbf{a},\bom)\in \M\times \M\times \W: \ba\cdot (z_2-z_1)>0, (z_2-z_1)\lrcorner\bom =\bf0\},
\]
and define the function $\Phi'_{\pa D2}:A_{\pa D2}\rightarrow \D$ by 
\[
\Phi'_{\pa D2}(z_1,z_2,\mathbf{a},\bom)\coloneqq\left(z_1+\frac{|z_2-z_1|^2}{2\mathbf{a}\cdot(z_2-z_1)}\mathbf{a}, \bom, \frac{|z_2-z_1|^2}{2\mathbf{a}\cdot(z_2-z_1)}\right).
\]
Then $S_2\subseteq \Phi'_{\pa D2}(A'_{\pa D2})$. Since $\Phi'_{\pa D2}$ is locally Lipschitz, to show that $\H^{d-1}(S_2)=0,$ it suffices to show that the Hausdorff dimension of $A_{\pa D2}$ is $d-2=(n-1)+k(n-k)$. Since $\aa$ is a unit vector in $\Real^n$, it is chosen from an $(n-1)$-dimensional set. Since $\mathbf{a}\in [\bom]^\perp$ and $(z_2-z_1)\lrcorner\bom=\bf0$, and because $\ba$ is not parallel to $z_2-z_1$ by construction, given $z_1$, $z_2$, and $\aa$, $\bom$ is choosen from a set of dimension $k(n-2-k)$. Summing the dimensions of the choices for $z_1,$ $z_2$, $\aa$, and $\bom$, yields $d-2.$ 

Since $\D_{\pa D 2}\subseteq \Phi_{\pa D 2}(A_{\pa D2})\cup \Phi'_{\pa D 2}(A'_{\pa D2})$, it follows that $\H^{d-1}(D_{\pa D2})=0$.

\medskip
\noindent\textit{Item 2.} We proceed as in the previous item. Subdivide $\D_{\pa \cM 2}$ into two sets:
\begin{align*}
T_1&\coloneqq \{(p,\bom,r)\in \cD_{\pa \M2}: p=z_1+\xi\tfrac{z_2-z_1}{|z_2-z_1|}\mbox{ for some } z_1,z_2\in \pa \cM \cap\overline{D}(p,\bom,r),\xi\in\Real^+_0\},\\
T_2&\coloneqq \cD_{\pa \M2}\setminus T_1.
\end{align*}
Notice that $T_1$ consists of those disks that have a center collinear with two points in $\pa \cM \cap\overline{D}(p,\bom,r)$, and $T_2$ is the set of disks with two points in $\pa \cM \cap\overline{D}(p,\bom,r)$ with center not between two such points. 

Consider the set
\[
A_{\pa \M2}\coloneqq\{(z_1,z_2,\bom, \xi,r)\in\pa \M\times\pa \M\times \hat{\Lambda}^k_u(\R^n)\times\R_0^+\times\R^+: (z_2-z_1)\lrcorner\bom={\bf0},\xi\le r\},
\]
and define $\Phi_{\pa \M2}:A_{\pa \M2}\rightarrow \D$ by 
\[
\Phi_{\pa \M2}(z_1,z_2,\bom,\xi,r)\coloneqq\left(z_1+\xi\frac{z_2-z_1}{|z_2-z_1|},\bom, r\right).
\]
Notice that $T_1\subseteq \Phi_{\pa \M2}(A_{\pa \M2})$ and this mapping is locally Lipschitz. The dimension of $A_{\pa \M2}$ comes from choosing $z_1,z_1\in \pa \M$, $\bom\in\hat\Lambda_u^k([z_2-z_2]^\perp)$, and real numbers $\xi$ and $r$, so the dimension of $A_{\pa M2}$ is $k+k(n-k)$.

Now consider the set
\[
A'_{\pa \M2}\coloneqq\{(z_1,z_2,\mathbf{a},\bom,\xi,r)\in \pa\M\times \pa\M\times \W\times \R_0^+\times\R^+: \ba\cdot (z_2-z_1)>0, (z_2-z_1)\lrcorner\bom ={\bf0},\xi\le r\},
\]
and define the function $\Phi'_{\pa \M2}:A'_{\pa \M2}\rightarrow \D$ by 
\[
\Phi'_{\pa \M2}(z_1,z_2,\mathbf{a},\bom)\coloneqq\left(z_1+\xi \mathbf{a}, \bom, r\right).
\]
Then $T_2\subseteq\Phi'_{\pa \M2}(A'_{\pa \M2})$. Since the map $\Phi'_{\pa \M2}$ is locally Lipschitz, to show that $\H^{d-1}(T_2)=0,$ it suffices to argue that the Hausdorff dimension of $A'_{\pa \M2}$ is $d-2=(n-1)+k(n-k)$. Notice that, since $\aa$ is a unit vector in $\Real^n$, it is chosen from an $(n-1)$-dimensional set. Since we need $\mathbf{a}\in [\bom]^\perp$ and $(z_2-z_1)\lrcorner\bom=\bzero$, the dimension of the set to choose $\bom$ from given $z_1$, $z_2$, and $\ba$ is $k(n-2-k)$. Summing the dimensions of the choices for $z_1,$ $z_2$, $\aa$, $\bom$, $\xi$, and $r$, we get $d-2.$ 

Since $\D_{\pa \M 2}\subseteq \Phi_{\pa \M2}(A_{\pa \M2})\cup \Phi'_{\pa \M2}(A'_{\pa \M2})$, it follows that $\H^{d-1}(D_{\pa \M2})=0$.

\medskip
\noindent\textit{Item 3.} Consider the sets
\begin{align*}
R_1&\coloneqq \{(p,\bom,r)\in\D_{\pa \M}\cap D_{\pa D}: p=z_2+r\tfrac{z_1-z_2}{|z_1-z_2|}\\
&\qquad\qquad\mbox{ for some } z_1\in \pa \cM\cap\overline{D}(p,\bom,r)\ \text{and}\ z_2\in \cM \cap\pa D(p,\bom,r)\},\\
R_2&\coloneqq (\D_{\pa \M}\cap D_{\pa D})\setminus R_1.
\end{align*}
The set $R_1$ is the disks in $\D_{\pa \M}\cap D_{\pa D}$ with center collinear with a point in $\pa \cM\cap\overline{D}(p,\bom,r)$ and a point in $\cM \cap\pa D(p,\bom,r)$.
Define 
\[
A_{\cap 1}\coloneqq \{(z_1,z_2,\bom,r)\in \pa \M\times \M\times \hat{\Lambda}^k_u(\R^n) \times \R^+:  (z_1-z_2)\lrcorner\bom=\bzero \},
\]
and 
\[
A_{\cap 2}\coloneqq \{(z_1,z_2,\aa,\bom,r\in \pa \M\times \M\times \W\times \R^+: \ba\cdot (z_1-z_2)>0, (z_1-z_2)\lrcorner\bom=\bzero \}.
\]
We can define $\Phi_{\cap 1}:A_{\cap 1}\rightarrow \D$ by 
\[
\Phi_{\cap 1}(z_1,z_2,\bom,r)\coloneqq \big(z_2+r\tfrac{z_1-z_2}{|z_1-z_2|}, \bom, r\big)\,.
\]
and $\Phi_{\cap 2}:A_{\cap 2}\rightarrow \D$
\[
\Phi_{\cap 2}(z_1,z_2,\ba,\bom,r)\coloneqq\left(z_2+r\ba, \bom, r\right)\,.
\]
Using arguments as before, one finds that the dimension of $A_{\cap 1}$ is $k+k(n-k)$ and the dimension of $A_{\cap 2}$ is $n-1+k(n-k)$. So, analogous to the previous items, it follows that $\H^{d-1}(D_{\pa \M}\cap D_{\pa D})=0.$

\medskip
\noindent\textit{Items 4 and 5.} Both follow from the definitions of the sets involved. 
\end{proof}

Next we show that two special collections of disks are $(d-1)$-dimensional submanifolds of $\cD$ and characterize the tangent spaces of one of them.

\begin{lemma}\label{lempDemb}
The set $\cD_{\partial D1}\setminus(\cD_{\partial \cM}\cup\cD_\text{\rm tan})$ is an embedded submanifold of $\cD$ of dimension $d-1$, and given $(p,\bom,r)$ in this submanifold, with $z\in \partial D(p,\bom,r)\cap \cM$ and setting $\ba\coloneqq (p-z)/r$, the tangent space
\beqn
T_{(p,\bom,r)}(\cD_{\partial D1}\setminus(\cD_{\partial \cM}\cup\cD_\text{\rm tan}))
\eeqn
is spanned by vectors of the form:
\begin{itemize}
\item $k$ dimensions: $(\bt,\bzero,0)$ for $\bt\in T_z\cM$,
\item $n-k-1$ dimensions: $(\bc,\bzero,0)$ for $\bc\in[\bom]^\perp$ such that $\bc\cdot\ba=0$,
\item $k(n-k)-k$ dimensions: $(\bzero,\bbeta,0)$ for $\bbeta\in T_{\bom} \hat\Lambda^k_u(\Real^n)$ such that $\ba\lrcorner \bbeta=\bzero$,
\item $k$ dimensions: $(r\bb,-\ba\wedge(\bb\lrcorner\bom),0)$ for $\bb\in[\bom]$,
\item 1 dimension: $(\ba,0,1)$.
\end{itemize}
Moreover, the set $\cD_{\partial \M1}\setminus(\cD_{\partial D}\cup\cD_\text{\rm tan})$ is also an embedded submanifold of $\cD$ of dimension $d-1$.
\end{lemma}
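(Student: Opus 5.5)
The plan is to realize $\cD_{\partial D1}\setminus(\cD_{\partial\cM}\cup\cD_{\rm tan})$ locally as the zero set of a submersion, or equivalently as the image of an immersion, and then read off the tangent space by differentiating curves. Fix $(p_0,\bom_0,r_0)$ in the set, with unique $z_0\in\partial D(p_0,\bom_0,r_0)\cap\cM$, and set $\ba_0=(p_0-z_0)/r_0$. The condition $z\in\partial D(p,\bom,r)$ means $p-z\in[\bom]^\perp$ and $|p-z|=r$, that is,
\[
(p-z)\lrcorner\bom=\bzero,\qquad |p-z|^2=r^2.
\]
I would parametrize nearby disks whose boundary meets $\cM$ by the map
\[
\Psi(z,\ba,\bom,r)\coloneqq(z+r\ba,\bom,r),\qquad z\in\cM,\ (\ba,\bom)\in\W,\ r>0.
\]
Its domain has dimension $k+w+1=k+n-1+k(n-1-k)+1=n+k(n-k)=d-1$. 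Every disk near $(p_0,\bom_0,r_0)$ in the set lies in the image of $\Psi$ near $(z_0,\ba_0,\bom_0,r_0)$. To get this, I will use that the intersection point is unique and that $\overline\cM$ is closed near $z_0$ together with compactness of the closed disk, so intersection points of nearby disks must be close to $z_0$. The set is relatively open in the image, because transversality and avoidance of $\partial\cM$ are open conditions, and uniqueness of the boundary intersection persists locally.

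The key step is to show that $\Psi$ is an injective immersion at $(z_0,\ba_0,\bom_0,r_0)$, hence a local embedding. Injectivity near the base point follows from uniqueness of $z$. For the immersion property, I will compute $d\Psi$ on tangent directions of the domain:
\begin{itemize}
\item $(\bt,0,0,0)$ with $\bt\in T_z\cM$ maps to $(\bt,\bzero,0)$;
\item $(0,\bc,0,0)$ with $\bc\in[\bom]^\perp$, $\bc\perp\ba$ (tangent to $\W$ with $\bom$ fixed) maps to $(r\bc,\bzero,0)$;
\item $(0,0,\bbeta,0)$ with $\ba\lrcorner\bbeta=\bzero$ maps to $(\bzero,\bbeta,0)$;
\item the coupled directions, in which $\ba$ rotates toward $\bb\in[\bom]$ while $\bom$ rotates so that $\ba\in[\bom]^\perp$ is preserved, give $(r\bb,-\ba\wedge(\bb\lrcorner\bom),0)$. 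I would verify this using Proposition~\ref{tusimpmv} and \eqref{replacewedgeID}, via the curve $\hat\ba(s)=\cos s\,\ba+\sin s\,\bb$ with $\bom(s)$ obtained by replacing $\bb$ in $\bom$ by $\cos s\,\bb-\sin s\,\ba$;
\item $(0,0,0,1)$ maps to $(\ba,\bzero,1)$.
\end{itemize}
These yield exactly the listed spanning vectors, modulo adding $\ba$ to the $p$-component; the last item, $(\ba,0,1)$, must be interpreted as $(\ba,\bzero,1)$. The dimension counts $k$, $n-k-1$, $k(n-k)-k$, $k$ and $1$ add up to $d-1$.

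The main obstacle is linear independence, and this is exactly where $\cD_{\rm tan}$ is excluded. Suppose a combination vanishes. The $r$-component kills the $(\ba,\bzero,1)$ term. Next, project the $\bom$-component onto directions of the form $\ba\wedge\bnu$. The terms $-\ba\wedge(\bb\lrcorner\bom)$ have that form and are nonzero for $\bb\neq\bzero$, while the $\bbeta$ terms satisfy $\ba\lrcorner\bbeta=\bzero$. By Proposition~\ref{dotreduction} these two families are orthogonal, so $\bb=\bzero$ and $\bbeta=\bzero$. What remains is $\bt+r\bc=\bzero$ with $\bt\in T_z\cM=[\vol(z)]$ and $\bc\in[\bom]^\perp$. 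Transversality, $\bom\cdot\vol(z)\neq0$, means $[\vol(z)]\cap[\bom]^\perp=\{\bzero\}$, which forces $\bt=\bc=\bzero$. Hence $d\Psi$ has full rank, and the set is an embedded $(d-1)$-manifold with the stated tangent space.

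For $\cD_{\partial\cM1}\setminus(\cD_{\partial D}\cup\cD_{\rm tan})$ I will use the analogous parametrization
\[
(y,\bv,\xi,\bom,r)\mapsto(y+\xi\bv,\bom,r),\qquad y\in\partial\cM,\ \bv\in\cU([\bom]^\perp),\ 0<\xi<r.
\]
Here $\xi<r$ holds because the disk avoids $\cD_{\partial D}$, and the case $\xi=0$ is handled with the chart $p=y+\bc$, $\bc\in[\bom]^\perp$ small. The domain dimension is $(k-1)+(n-k)+k(n-k)+1-1=d-1$ after accounting for $(\xi,\bv)$ as a point of $[\bom]^\perp$. Injectivity again follows from uniqueness of $y$, and the immersion property follows from $T_y\partial\cM\cap[\bom]^\perp=\{\bzero\}$, which is a consequence of transversality.
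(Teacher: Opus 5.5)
Your argument for $\cD_{\partial D1}\setminus(\cD_{\partial\cM}\cup\cD_{\rm tan})$ is correct. It uses the same map $\Psi(z,\ba,\bom,r)=(z+r\ba,\bom,r)$ and the same curves as the paper, but you prove the immersion property differently.

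\begin{itemize}
\item \emph{The paper} reads off injectivity of $\nabla\Psi$ from the appendix Jacobian formula \eqref{PsiJacobian}, which is nonzero when $\vol(z)\cdot\bom\neq0$. It then proves separately that $\hat\cA_{\partial D1}$ is open and that $\Psi^{-1}$ is continuous. Openness uses a chord argument: two rim points of nearby disks converging to $z$ produce a direction in $T_z\cM\cap[\bom]^\perp$.
\item \emph{You} show directly that the images of your five families of domain directions are linearly independent. You use $\bbeta\cdot(\ba\wedge\bnu)=(\ba\lrcorner\bbeta)\cdot\bnu=0$ and $[\vol(z)]\cap[\bom]^\perp=\{\bzero\}$; the first identity is \eqref{movingdotL}, not Proposition~\ref{dotreduction}.
\end{itemize}

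Your route is more self-contained, since the appendix computation is itself partly deferred to \cite{MS25}. Combined with the dimension count, it also actually proves that the listed vectors span the tangent space. The paper only checks membership, and never differentiates in $r$ to obtain $(\ba,\bzero,1)$.

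One step needs reordering. Local injectivity of $\Psi$ follows from the immersion property, not from uniqueness of $z$ at the base disk. Local injectivity together with your compactness argument is what yields the claim that uniqueness of the rim intersection ``persists locally.'' You assert that claim without proof; in the paper the chord argument does this job.

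In the second part, the claim that $\xi<r$ holds because the disk avoids $\cD_{\partial D}$ is false. $\cD_{\partial D}$ only records $\partial D\cap\cM$, where $\cM$ is the interior. So a transversal disk whose rim passes through a point of $\partial\cM$, and otherwise misses $\overline\cM$, lies in $\cD_{\partial\cM1}\setminus(\cD_{\partial D}\cup\cD_{\rm tan})$ with $\xi=r$. This already happens for $n=2$, $k=1$: take a segment one of whose endpoints is an endpoint of the curve.

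Near such a disk the set is the image of $\{\xi\le r\}$ under your chart, which is a manifold with boundary. So the ``moreover'' assertion cannot hold at these disks. They form an $\cH^{d-1}$-null set, which is all Proposition~\ref{oddFP} uses. The paper's $\Xi$ keeps $\xi\le r$ and passes over the same point without comment.

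Your Cartesian chart $p=y+\bc$ at $\xi=0$ does repair a real defect of the paper's $\Xi$. That map is not injective at $\xi=0$, since every $\ba\in\cU([\bom]^\perp)$ gives the same disk there.

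Finally, your count $(k-1)+(n-k)+k(n-k)+1-1$ equals $d-2$, not $d-1$; drop the trailing $-1$.
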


\begin{proof}
First we establish the results about the set $\cD_{\partial D1}\setminus(\cD_{\partial \cM}\cup\cD_\text{\rm tan})$. Consider the function $\Psi:\overline\cM\times\cW\times\Real^+\rightarrow\D$ defined by
\beqn
\Psi(z,\ba,\bom,r)\coloneqq (z+r\ba,\bom,r),\qquad (z,\ba,\bom,r)\in\overline\cM\times\cW\times\Real^+.
\eeqn
Notice that by the definition of this function, $\Psi(\cM\times\cW\times\Real^+)=\cD_{\partial D}$. Now consider the set 
\beqn
\hat\cA_{\partial D1}\coloneqq \Psi^{-1}(\cD_{\partial D1}\setminus(\cD_{\partial\cM}\cup\cD_{\rm tan}))\cap(\cM\times\cW\times\Real^+).
\eeqn
As $\cM\times\cW\times\Real^+$ is a manifold of dimension $d-1$, we will show that $\hat\cA_{\partial D1}$ is an open subset of $\cM\times\cW\times\Real^+$ to prove that it is also a manifold of the same dimension. Begin by fixing $(z,\ba,\bom,r)\in\hat\cA_{\partial D1}$. Since $\cD_{\partial\cM}$ and $\cD_{\rm tan}$ are closed subsets of $\cD$ and $\Psi$ is smooth, one can choose a small enough open neighborhood of $(z,\ba,\bom,r)$ such that the image of every element of this neighborhood under $\Psi$ is not in $\cD_{\partial\cM}$ nor in $\cD_{\rm tan}$. Moreover, this neighborhood can be chosen small enough so that its image under $\Psi$ is contained in $\cD_{\partial D1}.$ To see this, suppose that it cannot be chosen disjoint from $\cD_{\partial D2}$. Then there is a sequence $(z_n,\ba_n,\bom_n,r_n)\in\hat\cA_{\partial D1}$ that converges to $(z,\ba,\bom,r)$ such that for each $n\in\Nat$, $\partial D(z_n+r_n\ba_n,\bom_n,r_n)\cap\cM$ contains at least two elements. Let $z_{n1}$ and $z_{n2}$ denote two such elements. Set $\bu_n\coloneqq (z_{n1}-z_{n2})/|z_{n1}-z_{n2}|$. The sequence of $\bu_n$ is bounded and, hence, has a subsequence that converges to, say, $\bu$. As $z_{n1},z_{n2}\in\cM$ and $z_{n1},z_{n2}\rightarrow z$ as $n\rightarrow\infty$, it follows that
\beqn\label{uTM}
\bu\in T_z\cM.
\eeqn
Moreover, since
\beqn
\bu_n\lrcorner\bom_n=\bzero,
\eeqn
taking the limit along the subsequence yields $\bu\lrcorner\bom=\bzero$, which is equivalent to $\bu\in[\bom]^\perp$. Putting this together with \eqref{uTM} we see that $\vol(z)\cdot\bom=0$, which contradicts the fact that $\Psi(z,\ba,\bom,r)\not\in \cD_\text{tan}$. Thus, $\hat\cA_{\partial D1}$ is an open subset of $\cM\times\cW\times\Real^+$, and it follows that $\hat\cA_{\partial D1}$ is a manifold of the same dimension.

 Our next goal is to show that $\Psi$ is an embedding. In particular, we need to confirm that it is an injective, continuous map with continuous inverse. Since no element of $\hat\cA_{\partial D1}$ is sent to a disk that is tangent to $\cM$, we see from \eqref{PsiJacobian} that the Jacobian of $\Psi$ is nonzero on $\hat\cA_{\partial D1}$ and, so, the gradient of $\Psi$ is injective on $\hat\cA_{\partial D1}$. This implies that $\Psi|_{\hat\cA_{\partial D1}}$, the restriction of $\Psi$ to $\hat\cA_{\partial D1}$, is an immersion. Moreover, one can readily see that $\Psi$ is injective on $\hat\cA_{\partial D1}$. Next, we show that the inverse of $\Psi|_{\hat\cA_{\partial D1}}:\hat\cA_{\partial D1}\rightarrow \cD_{\partial D1}\setminus(\cD_{\partial\cM}\cup\cD_{\rm tan})$ is continuous. Towards this end, consider a sequence $(p_n,\bom_n,r_n)$ in $\cD_{\partial D1}\setminus(\cD_{\partial\cM}\cup\cD_{\rm tan})$ that converges to $(p,\bom,r)$ that is also in this set. Find $(z_n,\bu_n,\bom_n,r_n)\in\hat\cA_{\partial D1}$ and $(z,\bu,\bom,r)\in\hat\cA_{\partial D1}$ such that
\[
\Psi(z_n,\bu_n,\bom_n,r_n)=(p_n,\bom_n,r_n)\quad\text{and}\quad \Psi(z,\bu,\bom,r)=(p,\bom,r).
\]
It suffices to show that $(z_n,\bu_n,\bom_n,r_n)\rightarrow (z,\bu,\bom,r)$. For the sake of contradiction, suppose that $\bu_n\not\rightarrow \bu$. Since $|\bu_n|=1$ for all $n$, it follows that there is a subsequence $\bu_{n_i}$ that converges to some $\hat\bu\not=\bu$ that must satisfy $\hat\bu\lrcorner\bom=\bzero$. As $p_n=z_n+r_n\bu_n$, taking the limit along this subsequence shows that $z_{n_i}$ converges to some $\hat z\in\overline\cM$ such that $p=\hat z+r\hat\bu$. From this we know that $\Psi(\hat z,\hat \bu,\bom,r)=\Psi(z,\bu,\bom,r)\in \cD_{\partial D1}\setminus(\cD_{\partial\cM}\cup\cD_{\rm tan})$. It follows that $\hat z\not\in\partial\cM$ and, thus, $(\hat z,\hat \bu,\bom,r)\in\hat\cA_{\partial D1}$. Moreover, as $\Psi$ is injective on $\hat\cA_{\partial D1}$, we must have $\hat \bu=\bu$ and $\hat z=z$, which yields the contradiction. We can thus conclude that the inverse of $\Psi|_{\hat\cA_{\partial D1}}$ is continuous and, so, $\Psi|_{\hat\cA_{\partial D1}}$ is an embedding. It follows that $\Psi(\hat\cA_{\partial D1})=\cD_{\partial D1}\setminus(\cD_{\partial\cM}\cup\cD_{\rm tan})$ is an embedded submanifold of $\cD$.

Now consider $(p,\bom,r)\in \cD_{\partial D1}\setminus(\cD_{\partial\cM}\cup\cD_{\rm tan})$ and find the unique $(z,\ba,\bom,r)\in\hat\cA_{\partial D1}$ such that $\Psi(z,\ba,\bom,r)=(p,\bom,r)$ so that $\ba=(p-z)/r$. Set
\beqn
\cT\coloneqq T_{(p,\bom,r)}(\cD_{\partial\cD1}\setminus(\cD_{\partial M1}\cup\cD_\text{tan})).
\eeqn
We now find a collection of vectors spanning this space.

If $\hat z:(-\ve,\ve)\rightarrow\cM$ is any smooth curve on $\cM$ such that $\hat z(0)=z$, then differentiating the curve
\beqn
s\mapsto \Psi(\hat z(s),\ba,\bom,r)=(\hat z(s)+r\ba,\bom,r)
\eeqn
and evaluating it at $s=0$ shows that $(\bt,\bzero,0)\in\cT$ for any $\bt\in T_z\cM$.

Next, for any $\bc\in \cU([\bom]^\perp)$ such that $\bc\cdot\ba=0$, define $\hat \ba:(-\ve,\ve)\rightarrow \cU([\bom]^\perp)$ by $\hat\ba(s)=\cos(s)\ba+\sin(s)\bc$. Notice that $(\hat\ba(s),\bom)\in\cW$ for any $s$. Thus, we can consider the curve
\beqn
s\mapsto \Psi(z,\hat \ba(s),\bom,r)=(z+r\hat\ba(s),\bom,r)
\eeqn
and by evaluating its derivative at $s=0$ we obtain that $(\bc,\bzero,0)\in\cT$. As $\cT$ is a linear space, this fact also holds when $\bc$ is not a unit vector.

For any $\bbeta\in T_{\bom} \hat\Lambda_u^k(\Real^n)$ with $\ba\lrcorner \bbeta=\bzero$, one can find a curve $\hat\bom:(-\ve,\ve)\rightarrow \Hat\Lambda_u^k(\Real^n)$ such that $\hat\bom'(0)=\bbeta$ and $(\ba,\hat\bom(s))\in\cW$ for all $s\in(-\ve,\ve)$. Thus, evaluating the derivative of the curve
\beqn
s\mapsto\Psi(z,\ba,\hat\bom(s),r)=(z+r\ba,\hat\bom(s),r)
\eeqn
at $s=0$ shows that $(\bzero,\bbeta,0)\in\cT$.

Now consider a unit vector $\bb\in[\bom]$ and functions $\hat\ba,\hat\bb:(-\ve,\ve)\rightarrow\Real^n$ defined by
\beqn
\hat\ba(s)=\cos(s)\ba+\sin(s)\bb\quad\text{and}\quad \hat\bb(s)=-\sin(s)\ba+\cos(s)\bb.
\eeqn
Recalling \eqref{replacewedgeID}, once can readily check that $(\hat\ba(s),\hat\bb(s)\wedge(\bb\lrcorner \bom))\in\cW$. Thus, we can consider the curve
\beqn
s\mapsto\Psi(z,\hat\ba(s),\hat\bb(s)\wedge(\bb\lrcorner \bom),r)=(z+r\hat\ba(s),\hat\bb(s)\wedge(\bb\lrcorner \bom),r)
\eeqn
and, upon evaluating its derivative at $s=0$, we see that $(\bb,-\ba\wedge(\bb\lrcorner\bom),0)\in\cT$. By linearity, this is also true for $\bb$ that are not unit vectors.

To show that $\cD_{\partial \cM1}\setminus(\cD_{\partial D}\cup\cD_\text{\rm tan})$ is an embedded submanifold of $\cD$ a similar argument can be used. The main difference is that one uses the function 
\beqn\label{xilem3.3}
\Xi(z,\ba,\bomega,\xi,r)\coloneqq (z+\xi \ba,\bomega,r),\quad (z,\ba,\bomega,\xi,r)\in \partial\cM\times\cW\times\{(\xi,r)\in\Real^+_0\times\Real^+\ :\ \xi\leq r\}.
\eeqn
The range of this function is $\cD_{\partial\cM}$ and it can be shown that if $\hat\cA_{\partial \cM1}\coloneqq \Xi^{-1}(\cD_{\partial \cM1}\setminus(\cD_{\partial D}\cup\cD_{\rm tan}))$, then $\Xi$ restricted to $\hat\cA_{\partial \cM1}$ is an embedding. Thus, $\cD_{\partial \cM1}\setminus(\cD_{\partial D}\cup\cD_\text{\rm tan})$ is an embedded submanifold of $\cD$. As the details are similar to the previous argument, they will be skipped.
\end{proof}

It turns out that $\cD_\text{odd}$ is locally a set of finite perimeter and its essential boundary can be described in terms of a few of the sets of disks introduced at the beginning of this section. To state this result it is useful to introduce some notation. If $A$ and $B$ are subsets of $\cD$, we write
\beqn\label{subsetsim}
A\subsetsim B\quad\text{if}\quad \cH^{d-1}(A\setminus B)=0
\eeqn
and
\beqn\label{cong}
A\cong B\quad \text{if}\quad A\subsetsim B\ \text{and}\ B\subsetsim A.
\eeqn

\begin{proposition}\label{oddFP}
The set $\D_\text{\rm odd}$ is locally a set of finite perimeter.  Moreover, the essential boundary $\partial^*\D_\text{\rm odd}$\footnote{For the definition of sets of finite perimeter and essential boundary see, for example, Ambrosio, Fusco, and Pallara \cite{AFP}.} of this set satisfies $\partial^*\D_\text{\rm odd}\cong(\D_{\partial\cM 1}\cup\cD_{\partial D1})\setminus \cD_\text{\rm tan}$.
\end{proposition}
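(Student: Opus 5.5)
The plan is to use Federer's criterion: a set $E\subseteq\cD$ whose topological boundary $\partial E$ satisfies $\cH^{d-1}(\partial E\cap\cE)<\infty$ for every bounded open $\cE\subseteq\cD$ is locally of finite perimeter. In that case $\partial^*E\subseteq\partial E$, and $\cH^{d-1}(\partial E\setminus\partial^*E)=0$ as soon as $E$ has density strictly between $0$ and $1$ at $\cH^{d-1}$-a.e. point of $\partial E$. So I would first locate the topological boundary of $\D_{\rm odd}$. By Lemma~\ref{lemmeasure2}, items~\ref{eo} and~\ref{Dsep}, $\D_{\rm odd}$ and $\D_{\rm even}$ are disjoint open sets whose union is the complement of $\D_{\partial\cM}\cup\D_{\rm tan}\cup\D_{\partial D}$. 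Hence
\[
\partial\D_{\rm odd}\subseteq \D_{\partial\cM}\cup\D_{\rm tan}\cup\D_{\partial D}.
\]
Lemma~\ref{lemmeasure}, items~\ref{paM}, \ref{tan} and~\ref{paD}, then gives locally finite $\cH^{d-1}$ measure of $\partial\D_{\rm odd}$, and Federer's criterion gives that $\D_{\rm odd}$ is locally of finite perimeter. It also gives $\partial^*\D_{\rm odd}\subsetsim \D_{\partial\cM}\cup\D_{\rm tan}\cup\D_{\partial D}$.

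Next I would remove the negligible pieces using Lemma~\ref{lemmeasure2}: $\D_{\partial D2}$ and $\D_{\partial\cM2}$ are $\cH^{d-1}$-null, and so is $\D_{\partial\cM}\cap\D_{\partial D}$. This leaves, up to $\cH^{d-1}$-null sets, the disjoint union
\[
\big(\D_{\partial\cM1}\setminus(\D_{\partial D}\cup\D_{\rm tan})\big)\cup\big(\D_{\partial D1}\setminus(\D_{\partial\cM}\cup\D_{\rm tan})\big)\cup\D_{\rm tan}.
\]
The key remaining point is to show $\cH^{d-1}(\partial^*\D_{\rm odd}\cap\D_{\rm tan})=0$; I expect this to be the main obstacle. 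Generically, a tangent disk is a fold: at a nondegenerate tangency two intersection points with $\cM$ are created or annihilated, so the parity does not change. Then $\D_{\rm tan}$ lies, near such points, inside the interior of the closure of $\D_{\rm odd}$ or of $\D_{\rm even}$, and has density $1$ or $0$ there. To make this rigorous, I would first try to show that the set of tangent disks where the tangency is degenerate (higher order contact, or $\dim([\bom]^\perp\cap T_z\cM)\geq 2$, or two tangency points) has Hausdorff dimension at most $d-2$. This would follow from parametrization counts like those in Lemma~\ref{lemmeasure2}, using maps from $\cM\times(\text{constrained }\bom)\times\ldots$. At the remaining simple-tangency disks, a local normal-form argument (implicit function theorem applied to the intersection equation) shows that the intersection count changes by $0$ or $2$ across $\D_{\rm tan}$. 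If only $C^1$ regularity is available, I would instead approximate or argue via the degree mod $2$ of the intersection map, which is locally constant off $\D_{\partial\cM}\cup\D_{\partial D}$.

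Finally, for the reverse inclusion I would use Lemma~\ref{lempDemb}: both $\D_{\partial D1}\setminus(\D_{\partial\cM}\cup\D_{\rm tan})$ and $\D_{\partial\cM1}\setminus(\D_{\partial D}\cup\D_{\rm tan})$ are embedded $(d-1)$-manifolds. Near a point of either set, the complement of the set locally splits into two sides, and crossing the set changes the intersection count by exactly one. For $\D_{\partial D1}$ this is because a single transversal point of $\cM$ enters or leaves through the rim. For $\D_{\partial\cM1}$ it is because the disk slides past the boundary of $\cM$, gaining or losing one point. Transversality (not in $\D_{\rm tan}$) ensures nothing else changes locally. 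Thus one side lies in $\D_{\rm odd}$ and the other in $\D_{\rm even}$, the density of $\D_{\rm odd}$ is $1/2$ at each such point, and these points belong to $\partial^*\D_{\rm odd}$. Combining the two inclusions yields $\partial^*\D_{\rm odd}\cong(\D_{\partial\cM1}\cup\D_{\partial D1})\setminus\D_{\rm tan}$.
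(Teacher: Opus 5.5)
Your proposal is correct, and its skeleton is the paper's. You apply Federer's criterion using that the boundary lies in $\cD_{\partial\cM}\cup\cD_{\rm tan}\cup\cD_{\partial D}$ (Items~\ref{eo} and~\ref{Dsep} of Lemma~\ref{lemmeasure2}, plus Lemma~\ref{lemmeasure}). You discard the null sets $\cD_{\partial D2}$, $\cD_{\partial\cM2}$ and $\cD_{\partial\cM}\cap\cD_{\partial D}$. You get the reverse inclusion from the embedded hypersurfaces of Lemma~\ref{lempDemb}, across which the parity flips, so $\cD_{\rm odd}$ has density $1/2$ there.

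You diverge from the paper only in the tangency step. Your primary plan is to stratify tangencies, bound the degenerate strata by dimension counts, and use a fold normal form at simple tangencies. That needs at least $C^2$ regularity: ``higher order contact'' and the normal form have no meaning for the $C^1$ manifolds assumed in Section~\ref{sec:3}. It also costs several extra parameter counts.

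The paper uses exactly your fallback and nothing more. Around a point of $\cD_{\rm tan}\setminus(\cD_{\partial\cM}\cup\cD_{\partial D})$, take a connected neighborhood disjoint from the closed sets $\cD_{\partial\cM}$ and $\cD_{\partial D}$. By Item~\ref{tan} of Lemma~\ref{lemmeasure}, $\cH^d$-a.e.\ disk in it is transversal to $\cM$. These disks are homotopic through disks whose rims miss $\cM$ and whose closures miss $\partial\cM$, so mod~$2$ intersection theory gives them all the same parity, hence density $0$ or $1$. This works under $C^1$ regularity and needs no classification of tangencies, so it should be your main argument rather than a fallback.

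One small point is left implicit in both your proposal and the paper. The mod~$2$ argument only handles tangent disks off $\cD_{\partial\cM}\cup\cD_{\partial D}$. To conclude $\cH^{d-1}(\partial^*\cD_{\rm odd}\cap\cD_{\rm tan})=0$ you also need $\cH^{d-1}\big(\cD_{\rm tan}\cap(\cD_{\partial\cM}\cup\cD_{\partial D})\big)=0$. This follows from a parameter count like those in Lemma~\ref{lemmeasure2}. For instance, tangency at a rim point is parametrized by $z\in\cM$, $\bom$ with $\bom\cdot\vol(z)=0$, $\ba\in\cU([\bom]^\perp)$ and $r$, which is a set of dimension $d-2$.
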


\begin{proof}
It follows from Items~\ref{eo} and \ref{Dsep} of Lemma~\ref{lemmeasure2} that $\partial^*\cD_\text{odd}\subseteq \cD_{\partial\cM}\cup\cD_\text{tan}\cup\cD_{\partial D}$.  Thus, from Items~\ref{paM}--\ref{paD} of Lemma~\ref{lemmeasure}, whenever $\cE\subseteq \cD$ is a bounded, open set we have
\beqn
\cH^{d-1}(\partial^*\cD_\text{odd}\cap\cE)\leq \cH^{d-1}((\cD_{\partial\cM}\cup\cD_\text{tan}\cup\cD_{\partial D})\cap\cE)<\infty.
\eeqn
It then follows from a result of Federer \cite{Fed}[4.5.11] that $\cD_\text{odd}$ has finite perimeter in $\cE$.  A result of Ambriosio, Fusco, and Pallara \cite{AFP}[Theorem 3.61] then shows that $\cD_\text{odd}$ has density either $0$, $1/2$, or $1$ at $\cH^{d-1}$-a.e.~point of $\cE$.  Moreover, $\partial^*\cD_\text{odd}\cap\cE$ consists of those points with density $1/2$ up to a set of $\cH^{d-1}$-measure zero.

Next we show that at any point of $\cD_\text{tan}\setminus(\D_{\partial\M}\cup\D_{\partial D})$, the density of $\cD_\text{odd}$ is either 0 or 1.  Fix $(p,\bomega,r)\in\cD_\text{tan}\setminus(\D_{\partial\M}\cup\D_{\partial D})$ and find a neighborhood $\cN$ of it in $\cD$ such that $\cN$ is disjoint from $\D_{\partial\M}\cup\D_{\partial D}$.  This is possible as $\D_{\partial\M}$ and $\D_{\partial D}$ are closed in $\D$.  From Item~\ref{tan} of Lemma~\ref{lemmeasure}, $\cH^{d}$-a.e.~disk in $\cN$ is not tangent to $\M$ and, thus, is transversal to it.  Moreover, every disk in $\cN$ is homotopic to every other disk in this neighborhood.  Thus, from a result in differential topology, see Guillemin and Pollack \cite{GP74}, $\cH^{d}$-a.e.~disk in $\cN$ has the same number of intersections with $\M$ modulo 2.  It follows that the density of $\D_\text{odd}$ at $(p,\bomega,r)$ is either 0 or 1. Thus, $\partial^*\cD_\text{odd}\subseteq (\cD_{\partial\cM}\cup\cD_{\partial D})\setminus\cD_{\rm tan}$.

Since $\cH^{d-1}(\cD_{\partial D 2}\cup\cD_{\partial\cM 2})=0$, by Items~\ref{paD2} and \ref{paM2} of Lemma~\ref{lemmeasure2}, to complete the proof, it suffices to show that $(\D_{\partial D 1}\cup\cD_{\partial \M1})\setminus \cD_{\rm tan}\subsetsim\partial^*\cD_\text{odd}$.  First we show $\D_{\partial D 1}\setminus(\cD_{\rm tan}\cup\cD_{\partial\cM})\subsetsim\partial^*\cD_\text{odd}$.  From Lemma~\ref{lempDemb} we know that $\D_{\partial D 1}\setminus(\cD_{\rm tan}\cup\cD_{\partial\cM})$ is a $(d-1)$-dimensional embedded submanifold of $\cD$. It follows that small neighborhoods of a point in $\D_{\partial D 1}\setminus(\cD_{\rm tan}\cup\cD_{\partial\cM})$ are divided in half by this submanifold. Moreover, from the definition of the sets involved, one of these halves is $\cD_\text{odd}$ while the other is $\cD_\text{even}$. It follows that points of $\D_{\partial D 1}\setminus(\cD_{\rm tan}\cup\cD_{\partial\cM})$ cannot have density $0$ or $1$ relative to $\cD_\text{odd}$. Thus, it must be true that $\D_{\partial D 1}\setminus(\cD_{\rm tan}\cup\cD_{\partial\cM})\subsetsim\partial^*\cD_\text{odd}$.  A similar argument shows that $\D_{\partial\cM 1}\setminus(\cD_{\rm tan}\cup \cD_{\partial D})\subsetsim\partial^*\cD_\text{odd}$.
\end{proof}

Since $\cD_\text{odd}$ is locally of finite perimeter, we know that it has an exterior unit-normal at $\cH^{d-1}$-a.e.~point of its essential boundary.  The next result characterizes this normal on a part of $\partial^*\cD_\text{odd}$.

\begin{proposition}\label{ExtNormal}
For $\cH^{d-1}$-a.e.~$(p,\bomega,r)\in\partial^*\cD_\text{\rm odd}$ such that $D(p,\bomega,r)\cap\partial\cM$ is empty, there is a unique $z\in\partial D(p,\bomega,r)\cap\cM$ and $(p,\bom,r)\not\in\cD_\text{\rm tan}$.  For such $(p,\bomega,r)$ the exterior unit-normal $\bM(p,\bomega,r)\in \Real^n\times \Lambda^k(\Real^n)\times\Real$ to $\cD_\text{\rm odd}$ is given by
\beqn
\bM(p,\bomega,r)\coloneqq \begin{cases}
\phantom{-}\frac{\bm(p,\bomega,r)}{|\bm(p,\bomega,r)|} & \text{if } \big(\cH^0(D(p,\bomega,r)\cap \cM)\ \text{is odd and $\vol(z)\cdot\bomega>0\big)$}\\
 &\quad \text{or}\ \big((\cH^0(D(p,\bomega,r)\cap \cM)\ \text{is even and $\vol(z)\cdot\bomega<0\big)$},\\
-\frac{\bm(p,\bomega,r)}{|\bm(p,\bomega,r)|} & \text{if } \big(\cH^0(D(p,\bomega,r)\cap \cM)\ \text{is even and $\vol(z)\cdot\bomega>0\big)$}\\
 &\quad \text{or}\ \big((\cH^0(D(p,\bomega,r)\cap \cM)\ \text{is odd and $\vol(z)\cdot\bomega<0\big)$},
\end{cases}
\eeqn
where
\beqn\label{bm}
\bm(p,\bomega,r)\coloneqq ([(z-p)\wedge\bomega]\llcorner \vol(z), (p-z)\wedge[\bP_{\bom}((p-z)\lrcorner \vol(z))],r\vol(z)\cdot\bomega).
\eeqn
\end{proposition}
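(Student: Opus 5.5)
First I will dispose of the measure-theoretic part. By Proposition~\ref{oddFP}, $\partial^*\cD_{\rm odd}\cong(\cD_{\partial\cM1}\cup\cD_{\partial D1})\setminus\cD_{\rm tan}$. The disks with $D(p,\bom,r)\cap\partial\cM=\emptyset$ in $\cD_{\partial\cM}$ are those for which $\partial\cM$ meets only $\partial D(p,\bom,r)$. This set is contained in a $(d-2)$-dimensional Lipschitz image, namely the map $\Xi$ of \eqref{xilem3.3} restricted to $\xi=r$. Alternatively, I can intersect with $\cD_{\partial D}$ and use Item~\ref{paMD} of Lemma~\ref{lemmeasure2}. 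So, up to an $\cH^{d-1}$-null set, the disks in the statement lie in $\cD_{\partial D1}\setminus(\cD_{\partial\cM}\cup\cD_{\rm tan})$. This gives a unique $z\in\partial D(p,\bom,r)\cap\cM$ and $(p,\bom,r)\notin\cD_{\rm tan}$.

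Next, at such points $\partial^*\cD_{\rm odd}$ locally coincides with the embedded $(d-1)$-manifold of Lemma~\ref{lempDemb}. Hence the exterior normal is, up to sign, the unit normal to that manifold. I will show that $\bm$ is orthogonal to each of the five families of spanning tangent vectors listed in Lemma~\ref{lempDemb}, using the product inner product on $\Real^n\times\Lambda^k(\Real^n)\times\Real$. Write $\ba=(p-z)/r$ and $\vol=\vol(z)$. The checks go as follows.

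\begin{enumerate}
\item For $(\bt,\bzero,0)$ with $\bt\in T_z\cM$: by \eqref{movingdotR}, $[(z-p)\wedge\bom]\llcorner\vol\cdot\bt=-r(\ba\wedge\bom)\cdot(\bt\wedge\vol)=0$, since $\bt\wedge\vol=\bzero$.
\item For $(\bc,\bzero,0)$ with $\bc\perp\ba$ and $\bc\in[\bom]^\perp$: the identity \eqref{magnormid} gives $(\ba\wedge\bom)\llcorner\vol=(\bom\cdot\vol)\ba-\bom\llcorner(\ba\lrcorner\vol)$. The first term is orthogonal to $\bc$. The second is orthogonal to $\bc$ because $\bom\llcorner(\cdot)$ lies in $[\bom]$.
\item For $(\bzero,\bbeta,0)$ with $\ba\lrcorner\bbeta=\bzero$: the $\Lambda^k$-component of $\bm$ is $r\,\ba\wedge(\cdot)$. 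Its inner product with $\bbeta$ reduces to $(\cdot)\cdot(\ba\lrcorner\bbeta)=0$ by \eqref{movingdotL}.
\item For $(r\bb,-\ba\wedge(\bb\lrcorner\bom),0)$ with $\bb\in[\bom]$: the first component contributes $-r^2\bb\cdot\bom\llcorner(\ba\lrcorner\vol)$. The second contributes $-r^2(\ba\wedge\bP_{\bom}(\ba\lrcorner\vol))\cdot(\ba\wedge(\bb\lrcorner\bom))$, which by Proposition~\ref{dotreduction} equals $-r^2\,\bP_{\bom}(\ba\lrcorner\vol)\cdot(\bb\lrcorner\bom)$. I expect these two to cancel after rewriting with \eqref{movingdotR} and \eqref{LRswitch}, though the sign must be checked.
\item For $(\ba,\bzero,1)$: the first component gives $-r(\bom\cdot\vol)$ and the last gives $+r\,\vol\cdot\bom$, so the sum vanishes.
\end{enumerate}

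Nondegeneracy is immediate: by Proposition~\ref{wedgenorm}, $|\bm|^2\ge r^2(\bom\cdot\vol)^2>0$ off $\cD_{\rm tan}$.

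Finally I fix the sign. Moving in the direction $(\ba,\bzero,1)$, that is, increasing $r$ while keeping $z$ fixed on the boundary, is tangent and gives no information. Instead I move in the direction $+(\bzero,\bzero,1)$, which enlarges the radius with the center fixed. Its inner product with $\bm$ is $r\,\vol\cdot\bom$. If $\vol\cdot\bom>0$, increasing $r$ is the direction of positive pairing with $\bm$, and it brings $z$ into the disk. The transversal intersection then changes the parity of $\cH^0(D\cap\cM)$. So if the current count is odd, moving along $\bm$ leads to even disks, making $\bm$ exterior. If the count is even, $-\bm$ is exterior. When $\vol\cdot\bom<0$ the roles swap. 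This gives the stated cases.

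The main obstacle is the sign bookkeeping in item~4 of the orthogonality checks, together with justifying that the parity flips exactly once across the hypersurface. For the latter I will use transversality, since the disk is not in $\cD_{\rm tan}$, and the uniqueness of $z$.
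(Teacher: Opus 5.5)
Your overall route is the same as the paper's. You reduce to $\cD_{\partial D1}\setminus(\cD_{\partial\cM}\cup\cD_{\rm tan})$, test $\bm$ against the spanning vectors of Lemma~\ref{lempDemb}, and fix the sign by a parity flip. Your null-set argument (the image of $\Xi$ at $\xi=r$ has dimension $d-2$) is correct and more careful than the paper's. Your choice of the pure radial direction $(\bzero,\bzero,1)$ for the sign is also cleaner than the paper's curve along $\bM$. Your ``alternatively'' via Item~\ref{paMD} does not work, though: a disk whose boundary meets only $\partial\cM$ need not lie in $\cD_{\partial D}$.

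However, one necessary step is missing. You never show that $\bm(p,\bom,r)$ lies in $T_{(p,\bom,r)}\cD=\Real^n\times T_{\bom}\hat\Lambda^k_u(\Real^n)\times\Real$. Orthogonality to the $d-1$ listed vectors does not pin down the normal inside the ambient space $\Real^n\times\Lambda^k(\Real^n)\times\Real$. For example, $(\bzero,\bom,0)$ is normal to $\cD$ itself and hence orthogonal to every listed vector. So $\bm+c(\bzero,\bom,0)$ passes all five of your checks for every $c$. You must also show that the second component $r^2\,\ba\wedge\bP_{\bom}(\ba\lrcorner\vol)$ is tangent to $\hat\Lambda^k_u(\Real^n)$ at $\bom$. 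This is exactly what the paper does with Proposition~\ref{tusimpmv}, which applies because $\ba\in[\bom]^\perp$ and $\bP_{\bom}(\ba\lrcorner\vol)\in\Lambda^{k-1}([\bom])$.

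There is also a sign error in your item~4. As written, the two terms would add rather than cancel. The first component of $\bm$ is $-r(\ba\wedge\bom)\llcorner\vol=-r(\bom\cdot\vol)\ba+r\,\bom\llcorner(\ba\lrcorner\vol)$, and $\bb\cdot\ba=0$. So the first contribution is $+r^2\,\bb\cdot\bom\llcorner(\ba\lrcorner\vol)$. By \eqref{movingdotR} and then \eqref{movingdotL}, this equals $r^2(\ba\lrcorner\vol)\cdot(\bb\lrcorner\bom)$. Your second contribution is $-r^2\,\bP_{\bom}(\ba\lrcorner\vol)\cdot(\bb\lrcorner\bom)$. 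Since $\bP_{\bom}$ is symmetric and fixes $\bb\lrcorner\bom\in\Lambda^{k-1}([\bom])$, it equals $-r^2(\ba\lrcorner\vol)\cdot(\bb\lrcorner\bom)$. The two cancel, and \eqref{LRswitch} is not needed. With these two repairs your argument is complete.
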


\begin{proof}
By Proposition \ref{oddFP}, for $\cH^{d-1}$-a.e.~$(p,\bomega,r)\in\partial^*\cD_\text{odd}$ such that $D(p,\bomega,r)\cap\partial\cM$ is empty, there is a unique $z\in\partial D(p,\bomega,r)\cap\cM$, and $(p,\bomega,r)\not\in\cD_\text{tan}$, so the disk $D(p,\bomega,r)$ is transversal to $\cM$. Fix such a $(p,\bom,r)$. From Lemma~\ref{lempDemb} we know that $\cD_{\partial D1}\setminus (\cD_{\partial \cM}\cup \cD_\text{tan})$ is an embedded manifold. Thus, to show that $\bM(p,\bom,r)$ is an external unit-normal vector, it suffices to establish that (i) $\bm(p,\bom,r)$ is orthogonal to the manifold $\cD_{\partial D1}\setminus (\cD_{\partial \cM}\cup \cD_\text{tan})$ at $(p,\bom,r)$, (ii) $\bm(p,\bom,r)$ is in the tangent space $T_{(p,\bom,r)}\cD$, and (iii) $\bM(p,\bom,r)$ points towards to exterior of $\cD_\text{odd}$.

First, we show that $\bm(p,\bom,r)$ is orthogonal to $T_{(p,\bom,r)}(\cD_{\partial D1}\setminus(\cD_{\partial \cM}\cup\cD_\text{\rm tan}))$. This is accomplished by showing that $\bm(p,\bom,r)$ is orthogonal the collection of vectors given in Lemma~\ref{lempDemb} that span this tangent space. Notice that $r=|p-z|$ and set $\ba\coloneqq (p-z)/r$.
Using these relations, \eqref{bm} can be written as
\beqn
\bm(p,\bom,r)=\big((-r\ba\wedge\bomega)\llcorner \vol(z), r\ba\wedge\bP_{\bom}(r\ba\lrcorner \vol(z)),r\vol(z)\cdot\bomega\big).
\eeqn
Looking at the vectors that span $T_{(p,\bom,r)}(\cD_{\partial D1}\setminus(\cD_{\partial \cM}\cup\cD_\text{\rm tan}))$ given in Lemma~\ref{lempDemb}, we begin by considering $\bt\in T_z\cM$ and notice that by the definition of the right interior-product \eqref{movingdotR} we have 
 \[
  (\tt,0,0)\cdot \bm(p,\bomega,r)=\tt\cdot\big((-r\ba\wedge \bom)\llcorner \vol(z)\big)=(\tt\wedge\vol(z))\cdot(-r\ba\wedge\bom)=0,
 \]
 since $\tt\in[\vol(z)]$. Next, consider any $\bc\in[\bom]^\perp$ such that $\bc\cdot\ba=0$. Similar to as before,   
 \[
 (\bc,0,0)\cdot \bm(p,\bomega,r)=(\bc\wedge\vol(z))\cdot(-r\ba\wedge\bom).
 \]
Find an orthonormal basis $\{\bw_1,\bw_2,\dots,\bw_k \}$ for $[\bom]$ and an orthonormal basis $\{\bt_1,\bt_2,\dots,\bt_k \}$ for $T_z\cM$ so that $\bom=\bw_1\wedge\dots\wedge\bw_k$ and $\vol(z)=\bt_1\wedge\dots\wedge\bt_k$. It follows that 
\beqn\label{kdot}
(\bc\wedge\vol(z))\cdot(\aa\wedge\bom)=\text{det}\begin{pmatrix}
\bc\cdot\aa&\bc\cdot\bw_1&\cdots&\bc\cdot\bw_k\\
\tt_1\cdot\aa&\tt_1\cdot\bw_1&\cdots&\tt_1\cdot\bw_k\\
\vdots&\vdots&\ddots&\vdots\\
\tt_k\cdot\aa&\tt_k\cdot\bw_1&\cdots&\tt_k\cdot\bw_k\\
\end{pmatrix},
\eeqn
which is zero since the first row is entirely made out of zeros due to the properties of $\bc$. Next, consider any $\bbeta\in T_{\bom} \hat\Lambda_u(\Real^n)$ such that $\ba\lrcorner\bbeta=\bzero$. It follows that
\[
(0,\bbeta,0)\cdot\bm(p,\bomega,r)=\bbeta\cdot \big(r\aa\wedge\big(\bP_{\bom}(r\ba\lrcorner \vol(z))\big)\big)=(r\aa\lrcorner\bbeta)\cdot \big(\bP_{\bom}(r\ba\lrcorner \vol(z))\big)=0.
\]
Next, fix $\bb\in[\bom]$ and set $\bxi\coloneqq \bb\lrcorner\bom$. Employing Proposition~\ref{dotreduction}, the definition of the left and right interior-products, and the fact that $\bP_{\bom}$ is symmetric, we can calculate
\begin{align*}
(r\bb,-\ba\wedge\bxi,0)\cdot \bm(p,\bomega,r)&=-r\bb\cdot\big((r\aa\wedge\bomega)\llcorner \vol\big)-(\aa\wedge\bxi) \cdot\big(r\aa\wedge\big(\bP_{\bom}(r\aa\lrcorner \vol)\big)\big)\\
&=-r^2\big(\bb\cdot\big((\aa\wedge\bomega)\llcorner \vol(z)\big)+\bxi\cdot \bP_{\bom}(\aa\lrcorner \vol(z))\big)\\
&=-r^2\big((\bb\wedge\vol(z))\cdot(\aa\wedge\bomega)+(\aa\wedge\bxi)\cdot\vol(z)\big).
\end{align*}
Now we use the fact that
\beqn
\ba\wedge\bom=\ba\wedge\bb\wedge(\bb\lrcorner\bom)=-\bb\wedge\ba\wedge\bxi
\eeqn
and Proposition~\ref{dotreduction} again to continue the calculation to find that
\begin{align*}
(r\bb,-\ba\wedge(\bb\lrcorner\bom),0)\cdot \bm(p,\bomega,r)&=-r^2\big(-(\bb\wedge\vol(z))\cdot(\bb\wedge\aa\wedge\bxi)+(\aa\wedge\bxi)\cdot\vol(z)\big)\\
&=-r^2\big(-\vol(z)\cdot(\aa\wedge\bxi)+(\aa\wedge\bxi)\cdot\vol(z)\big)=0.
\end{align*}
Lastly, from the definition of the right interior-product and Proposition~\ref{dotreduction} again,
\begin{multline}
(\aa,0,1)\cdot \bm(p,\bomega,r)=-\aa\cdot\big((r\aa\wedge\bomega)\llcorner \vol(z)\big)+r\vol(z)\cdot\bomega\\
=-(\aa\wedge\vol(z))\cdot(r\aa\wedge\bom)+r\vol(z)\cdot\bomega=0.
\end{multline}

Next, we show that $\bm(p,\bom,r)$ is in the tangent space $T_{(p,\bom,r)}\cD$. Since
\beqn
T_{(p,\bom,r)}\cD=\Real^n\times T_{\bom}\hat\Lambda_u^k(\Real^n)\times\Real,
\eeqn
it suffices to show that the second component of $\bm(p,\bom,r)$ is in $T_{\bom}\hat\Lambda_u^k(\Real^n)$. As we know $\bP_{\bom}(r\ba\lrcorner \vol(z))\in \Lambda^{k-1}([\bom])$, since $\bP_{\bom}$ is the projection onto $[\bom]$ and $\ba\in[\bom]^\perp$, this follows from Proposition~\ref{tusimpmv}.

We now show $\bM(p,\bom,r)$ points towards the exterior of $\cD_\text{odd}$. To begin, consider a smooth curve $\gamma$ in $\cD$ defined on an interval of $\Real$ containing zero such that $\gamma(0)=(p,\bom,r)$ and $\gamma'(0)=\bM(p,\bom,r)$. Suppose that $\cH^0(D(p,\bom,r)\cap\cM)$ is odd and $\vol(z)\cdot\bom>0$. The second of these conditions implies that the third component of $\bM(p,\bom,r)$ is positive. This means that the radius of the disk $\gamma(t)$ is an increasing function of $t$ for small $t$. Moreover, as there is a unique point $z\in \partial D(p,\bom,r)\cap\cM$, it follows that for small negative $t$, we have $\gamma(t)\in \cD_\text{odd}$ while for small positive $t$, $\gamma(t)\in \cD_\text{even}$. This shows that $\bM(p,\bom,r)$ is an exterior normal to $\cD_\text{odd}$ in this case. Now suppose that $\cH^0(D(p,\bom,r)\cap\cM)$ is even and $\vol(z)\cdot\bom<0$. The third component of $\bM(p,\bom,r)$ is now negative. Using similar reasoning as before, it follows that for small negative $t$, we have $\gamma(t)\in \cD_\text{odd}$ while for small positive $t$, $\gamma(t)\in \cD_\text{even}$. Hence, once again, this shows that $\bM(p,\bom,r)$ is an exterior normal to $\cD_\text{odd}$. The cases associated with $\bM(p,\bom,r)$ pointing in the opposite direction of $\bm(p,\bom,r)$ are argued in a similar way.
\end{proof}

The next three results are related and together show that the integral of $r^{-1-\sigma}$ over a certain collection of disks all of whose radius are less than $\eta$ goes to zero as $\eta$ goes to zero. This fact is useful in computing the first variation of the $\sigma$-measure in the next section.

\begin{lemma}\label{lemDr}
Given $z\in\R^n$, $\bmu\in\hat{\L}^k_u(\R^n)$, $K>0$, and $\a\in(0,1)$, define the region 
\beqn\label{bowtie}
\cR\coloneqq \{z+\bt+\bh : \tt\in[\bmu], \bh\in[\bmu]^{\perp},\, \text{and}\ |\bh|< K|\tt|^{1+\a}\}.
\eeqn
If 
\beqn\label{bowdiscs}
\cD_\cR\coloneqq \{(\aa,\bom,r)\in\cW\times (0,\infty):\bom\cdot \bmu>0\mbox{ and }D(z+r\aa,\bom,r)\cap \cR\neq \emptyset\},
\eeqn
then it follows that 
\beqn\label{cDRinc}
\cD_\cR\subseteq\Big\{(\aa,\bom,r)\in\cW\times(0,\infty):r\ge \frac{|\bom\cdot\bmu|^{1/\a}}{2K^{1/\a}}\Big\}.
\eeqn
\end{lemma}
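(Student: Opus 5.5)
The plan is to look at one point of the disk that lies in $\cR$, split its displacement from $z$ into a part in $[\bmu]$ and a part in $[\bmu]^\perp$, and use the transversality quantity $\bom\cdot\bmu$ to bound the tangential part from below. Fix $(\aa,\bom,r)\in\cD_\cR$ and choose a point $y\in D(z+r\aa,\bom,r)\cap\cR$. By the definition of the disk, $y=z+r\aa+\xi\bv$ with $\bv\in\cU([\bom]^\perp)$ and $\xi\in[0,r)$. Set $\bx\coloneqq y-z=r\aa+\xi\bv$. Since $(\aa,\bom)\in\cW$, both $\aa$ and $\bv$ lie in $[\bom]^\perp$, so
\[
\bx\in[\bom]^\perp\quad\text{and}\quad |\bx|<2r.
\]
Because $y\in\cR$ and $[\bmu]\oplus[\bmu]^\perp$ is an orthogonal decomposition, we must have $\bt=\bP_{\bmu}\bx$ and $\bh=\bP^\perp_{\bmu}\bx$, with $|\bh|<K|\bt|^{1+\a}$. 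In particular $\bt\neq\bzero$, since otherwise this strict inequality would read $|\bh|<0$.

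The key step is a lower bound on $|\bt|$ in terms of $\bom\cdot\bmu$. From $\bx\in[\bom]^\perp$ we get $\bzero=\bP_{\bom}\bx=\bP_{\bom}\bt+\bP_{\bom}\bh$, hence
\[
|\bP_{\bom}\bt|=|\bP_{\bom}\bh|\le|\bh|.
\]
It then remains to show that $|\bP_{\bom}\bt|\ge|\bom\cdot\bmu|\,|\bt|$ for every $\bt\in[\bmu]$. I would prove this with principal angles. Choose orthonormal bases $\{\bm_i\}$ of $[\bmu]$ and $\{\bw_j\}$ of $[\bom]$ for which the matrix $(\bm_i\cdot\bw_j)$ is diagonal (an SVD of the restricted map $\bP_{\bom}|_{[\bmu]}$), with entries $\cos\theta_i\in[0,1]$. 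By \eqref{kvip}, $|\bom\cdot\bmu|=\prod_i\cos\theta_i\le\min_i\cos\theta_i$. The smallest singular value of $\bP_{\bom}|_{[\bmu]}$ is $\min_i\cos\theta_i$, so $|\bP_{\bom}\bt|\ge\min_i\cos\theta_i\,|\bt|\ge|\bom\cdot\bmu|\,|\bt|$. This is the step I expect to need the most care, mainly in justifying the simultaneous diagonalization and the sign and orientation conventions for $\bom\cdot\bmu$. Proposition~\ref{magproj} could alternatively be used to phrase the product of cosines as $|\bP_{\bom}\bmu|$.

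Combining the last two displays with the defining inequality of $\cR$ gives
\[
(\bom\cdot\bmu)|\bt|\le|\bh|<K|\bt|^{1+\a}.
\]
Dividing by $|\bt|>0$ yields $|\bt|^{\a}>(\bom\cdot\bmu)/K$. Finally, $|\bt|\le|\bx|<2r$, so
\[
r>\tfrac12|\bt|\ge\frac{|\bom\cdot\bmu|^{1/\a}}{2K^{1/\a}},
\]
which is \eqref{cDRinc}.
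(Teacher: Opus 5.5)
Your proof is correct, and it follows a genuinely different and more elementary route than the paper.

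\textbf{The paper's route.} It fixes $\bom$ and minimizes $|p-z|^2$ over $(z+[\bom]^\perp)\cap\partial\cR\setminus\{z\}$ by Lagrange multipliers. After ruling out three degenerate cases, it obtains an explicit formula for $|p-z|$ in terms of principal angles. It then reaches $\prod_i\cos\theta_i$ via the weighted AM--GM inequality.

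\textbf{Your route.} You argue pointwise. For any $y$ in the disk and in $\cR$, the identity $\bP_{\bom}(y-z)=\bzero$ transfers the smallness of the normal part $\bh$ onto $\bP_{\bom}\bt$. The smallest singular value of $\bP_{\bom}|_{[\bmu]}$ is $\min_i\cos\theta_i\ge\prod_i\cos\theta_i=|\bom\cdot\bmu|$, and this forces $|\bt|$, hence $r$, to be large.

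\textbf{Comparison.} Both arguments use the same principal-angle bases. Yours avoids several points the Lagrange-multiplier argument takes for granted: that the infimum is attained, that the constraints are regular enough for multipliers to exist, and the separate handling of degenerate configurations. It also gives the strict inequality. It even gives a slightly stronger bound, with $\min_i\cos\theta_i$ in place of the product. The paper's computation in exchange describes the critical points explicitly, but that extra precision is discarded at the AM--GM step. So nothing needed for Lemma~\ref{0bowlem} is lost by your shortcut.
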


\begin{proof}
Let $(\aa,\bom,r)\in\cD_\cR$. Notice that $|\bom\cdot \bmu|\neq 1$, as otherwise $D(z+r\aa,\bom,r)\cap\cR=\emptyset$. The main idea of this proof is to first consider the $(n-k)$-dimensional plane $z+[\bom]^\perp$ and show that the closest point to $z$ in the intersection of this plane with $\cR$ has distance at least $\frac{|\bom\cdot\bmu|^{1/\a}}{K^{1/\a}}$. Thus, for the disk $D(z+r\ba,\bom,r)$ to intersect $\cR$ would require $2r$ to be larger than this distance.

To minimize the distance between $z$ and the intersection of the plane $z+[\bom]^\perp$ and $\cR$, it suffices to minimize the distance to the intersection $(z+[\bom]^\perp)\cap \partial\cR$ not including $z$. Thus, we consider the minimization problem
\beqn\label{minproblem}
\inf|p-z|^2, \qquad p \in (z+[\bom]^\perp )\cap \pa\cR\setminus\{z\}. 
\eeqn
To analyze this problem, we employ the method of Lagrange multipliers. The constraints for $p$ can be formulated as 
\beqn\label{constraints}
\bom\llcorner(p-z)=\bzero\qquad\mbox{ and }\qquad |\bP_{\bmu}^\perp(p-z)|^2=K^2|\bP_{\bmu}(p-z)|^{2+2\a},
\eeqn
the first of these being associated with the plane $z+[\bom]^\perp$ and the second with $\partial\cR$. It follows that there are Lagrange multipliers $\l_1\in\R$ and $\blambda_2\in\Lambda^{k-1}([\bom])$ such that
\[\label{minplane}
2(p-z)=2\l_1\big((1+\a)K^2|\bP_{\bmu}(p-z)|^{2\a}\bP_{\bmu}(p-z)-\bP^\perp_{\bmu}(p-z)\big)+\blambda_2\lrcorner\bom.
\]
Using the identity $\bP_{\bmu}^\perp(p-z)=(p-z)-\bP_{\bmu}(p-z)$ and rearranging the previous equation yields
\beqn\label{newminplane}
2(1+\l_1)(p-z)=2\l_1\big((1+\a)K^2|\bP_{\bmu}(p-z)|^{2\a}+1\big)\bP_{\bmu}(p-z)+\blambda_2\lrcorner\bom.
\eeqn
The goal is to use this equality to obtain a lower bound for $|p-z|$. To do so, we need to handle three degenerate cases before getting to the fourth, main case.

\medskip
\noindent Case 1: Suppose $\bP_{\bmu}(p-z)=\bzero$. Then, by \eqref{constraints}$_2$, $\bP^{\perp}_{\bmu}(p-z)=\bzero$ and, so, $p=z$, which is not allowed in \eqref{minproblem}.

Thus, for the rest of the cases, we can use the unit vector
\beqn
\uu\coloneqq \frac{\bP_{\bmu}(p-z)}{|\bP_{\bmu}(p-z)|}.
\eeqn

\medskip
\noindent Case 2: Suppose $\bP_{\bmu}(p-z)\neq \bzero$ and $\l_1=-1$. Using 
\[
\overline{\l}_1\coloneqq 2\big[(1+\a)K^2|\bP_{\bmu}(p-z)|^{2\a}+1\big]|\bP_{\bmu}(p-z)|,
\]
\eqref{newminplane} can be written as
\[
\bzero=\overline{\l}_1\uu+\blambda_2\lrcorner\bom.
\]
As $\blambda_2\lrcorner\bom\in[\bom]$, this implies $\uu\in[\bom]$. We know $(p-z)\in[\bom]^\perp$, so
\[
0=\uu\cdot (p-z)=\bP_{\bmu}\uu\cdot (p-z)=\uu\cdot \bP_{\bmu}(p-z)=|\bP_{\bmu}(p-z)|,
\]
which is a contradiction. 

\medskip
\noindent Case 3: Suppose $\bP_{\bmu}(p-z)\neq \bzero$, $\l_1\neq-1$, and $\bP_{\bmu}\bP_{\bom}^\perp\bu=\bzero$. Upon defining
\beqn\label{betaudefcase3}
\beta_1\coloneqq \frac{\l_1[(1+\a)K^2|\bP_{\bmu}(p-z)|^{2\a}+1]|\bP_{\bmu}(p-z)|}{1+\l_1} \quad \text{and}\quad \bbbeta_2\coloneqq \frac{\blambda_2}{2(1+\l_1)},
\eeqn
\eqref{newminplane} becomes
\beqn\label{newminplanecase3}
p-z=\b_1\uu+\bbbeta_2\lrcorner\bom.
\eeqn
Utilizing \eqref{constraints}$_1$, we find that
\[
\bzero=\bom\llcorner( \b_1\uu+\bbbeta_2\lrcorner\bom)=\beta_1\bom\llcorner\uu+\bom\llcorner(\bbbeta_2\lrcorner \bom).
\]
Since $[\blambda_2]\subseteq[\bom]$, it is true that $[\bbbeta_2]\subseteq[\bom]$, so it follows that $\bbbeta_2=\bom\llcorner(\bbbeta_2\lrcorner \bom)$. Therefore, the previous equation implies that $\bbbeta_2=-\b_1\bom\llcorner \uu$.
Hence, going back to \eqref{newminplanecase3}, using \eqref{projip} yields
\[
p-z=\b_1(\uu-(\bom\llcorner\uu)\lrcorner\bom)=\b_1(\uu-\bP_{\bom}\uu)=\b_1 \bP^\perp_{\bom}\uu.
\]
Plugging this into \eqref{constraints}$_2$ gives
\[
|\bP_{\bmu}^\perp \bP^\perp_{\bom}\uu|^2=\b_1^{2\a}K^2|\bP_{\bmu}\bP^\perp_{\bom}\uu|^{2+2\a}\,.
\]
Since the right-hand side is 0 in this case, we conclude $\bP^\perp_{\bom}\uu=\bzero$, so $\uu\in[\bom].$ As in Case 2, this gives a contradiction. 

\medskip
\noindent Case 4: We are now ready to handle the main case, in which $\bP_{\bmu}(p-z)\neq \bzero$, $\l_1\neq-1$, and $\bP_{\bmu}\bP_{\bom}^\perp\bu\neq \bzero$. Define $\beta_1$ and $\bbbeta_2$ as in \eqref{betaudefcase3}. As in Case 3,
\[
|\bP_{\bmu}^\perp \bP^\perp_{\bom}\uu|^2=\b_1^{2\a}K^2|\bP_{\bmu}\bP^\perp_{\bom}\uu|^{2+2\a}\quad\text{and}\quad p-z=\beta_1\bP_{\bom}^\perp\bu.
\]
It follows that
\[
\beta_1=\frac{|\bP_{\bmu}^\perp \bP^\perp_{\bom}\uu|^{1/\a}}{K^{1/\a}|\bP_{\bmu}\bP^\perp_{\bom}\uu|^{1+1/\a}}
\]
and, so,
\beqn\label{pzmag}
|p-z|=\frac{|\bP_{\bmu}^\perp \bP_{\bom}^\perp\uu|^{1/\alpha}|\bP_{\bom}^\perp\uu|}{K^{1/\a}|\bP_{\bmu}\bP_{\bom}^\perp\bu|^{1+1/\a}}.
\eeqn

To help simplify the right-hand side of the previous equation, find an orthonormal basis $\{\bm_1,\bm_2,\dots,\bm_k\}$ for $[\bmu]$ and $\{\bw_1,\bw_2,\dots,\bw_k\}$ for $[\bom]$ such that
\beqn\label{anglebasis}
\bm_i\cdot \bw_j=0 \mbox{ if } i\neq j\qquad \mbox{ and }\qquad \bm_i\cdot \bw_i=\cos\theta_i,
\eeqn
where $0\le \theta_1\le \theta_2\le...\le \theta_k\le \pi/2$ are the principal angles between the subspaces $[\bmu]$ and $[\bom]$; see Jordan \cite{J1875}. Moreover, the order of these vectors can be chosen so that $\bmu=\bm_1\wedge\dots\wedge\bm_k$ and $\bom=\bw_1\wedge\dots\wedge\bw_k$ so that from \eqref{kvip},
\beqn\label{padotprod}
\bmu\cdot\bomega=\prod_{i=1}^k\cos\theta_i.
\eeqn
Since $\uu\in [\bmu]$, we can write $\uu=u_1\bm_1+...+u_k\bm_k$ for real numbers $u_1,...,u_k$. It follows that
\[
\bP_{\bom}^\perp\bu=\bu-\sum_{i=1}^k(\bw_i\otimes\bw_i)\bu=\bu-\sum_{i=1}^k u_i\cos\theta_i\bw_i=\sum_{i=1}^k(u_i(\bm_i-\cos\theta_i\bw_i))
\]
and, so, 
\beqn\label{magnitude1}
|\bP_{\bom}^\perp\bu|=\sqrt{\sum_{i=1}^k(u_i^2|\bm_i-\cos\theta_i\bw_i|^2)}=\sqrt{\sum_{i=1}^ku_i^2\sin^2\theta_i}.
\eeqn
We also have
\[
\bP_{\bmu} \bP_{\bom}^\perp\bu=\sum_{j=1}^k\Big[(\bm_j\otimes\bm_j)\sum_{i=1}^k(u_i(\bm_i-\cos\theta_i\bw_i))\Big]=\sum_{i=1}^{k}u_i\sin^2\theta_i\bm_i
\]
and, hence,
\beqn\label{magnitude2}
|\bP_{\bmu} \bP_{\bom}^\perp\bu|=\sqrt{\sum_{i=1}^{k}u_i^2\sin^4\theta_i}.
\eeqn
Moreover, this yields
\[
\bP_{\bmu}^\perp \bP_{\bom}^\perp\bu=\bP_{\bom}^\perp \bu-\bP_{\bmu} \bP_{\bom}^\perp\bu=\sum_{i=1}^ku_i(\cos^2\theta_i\bm_i-\cos\theta_i\bw_i)
\]
and 
\beqn\label{magnitude3}
|\bP_{\bmu}^\perp \bP_{\bom}^\perp\bu|=\sqrt{\sum_{i=1}^{k}u_i^2\cos^2\theta_i|\cos\theta_i\bm_i-\bw_i|^2}=\sqrt{\sum_{i=1}^ku_i^2\cos^2\theta_i\sin^2\theta_i}.
\eeqn
Inserting \eqref{magnitude1}--\eqref{magnitude3} into \eqref{pzmag} yields
\beqn
|p-z|=\left[\frac{\displaystyle\Big(\sum_{i=1}^ku_i^2\cos^2\theta_i\sin^2\theta_i \Big)\Big(\sum_{i=1}^ku_i^2\sin^2\theta_i\Big)^\a}{K^2\Big(\displaystyle\sum_{i=1}^ku_i^2\sin^4\theta_i\Big)^{\alpha+1}} \right]^{\frac{1}{2\alpha}} \ge \left[\frac{\displaystyle\sum_{i=1}^ku_i^2\cos^2\theta_i\sin^2\theta_i}{K^2\displaystyle\sum_{i=1}^ku_i^2\sin^2\theta_i}    \right]^{\frac{1}{2\alpha}}.
\eeqn
Now set 
\[
\xi_i\coloneqq\frac{u_i^2\sin^2\theta_i}{\sum_{i=1}^ku_i^2\sin^2\theta_i},
\]
and notice that $0\le \xi_i\le 1$ for all $i$ and $\sum_{i=1}^k\xi_i=1$. By the generalized arithmetic-geometric mean inequality and \eqref{padotprod}
\begin{multline}
|p-z|\ge K^{-1/\a}\left[\sum_{i=1}^k\xi_i\cos^2\theta_i\right]^{\frac{1}{2\a}}\ge K^{-1/\a}\left[\prod_{i=1}^k(\cos^2\theta_i)^{\xi_i}\right]^{\frac{1}{2\a}}\\
\ge K^{-1/\a}\left[\prod_{i=1}^k\cos^2\theta_i\right]^{\frac{1}{2\a}}= K^{-1/\a}\left[\prod_{i=1}^k\cos\theta_i\right]^{\frac{1}{\a}}=\frac{(\bmu\cdot\bom)^{1/\a}}{K^{1/\a}}.
\end{multline}

Since $D(z+r\aa,\bom,r)\cap\cR\not=\emptyset$, we must have $2r\geq |p-z|$, where $p$ achieves the infimum in \eqref{minproblem}. So,
\[
2r\geq |p-z|\ge \frac{(\bmu\cdot\bom)^{1/\a}}{K^{1/\a}}.
\]
As $(\ba,\bom,r)\in\cD_\cR$ was arbitrary, this establishes \eqref{cDRinc}.
\end{proof}

To state the next lemma, we need additional notation. Let $\cX$ be a $q$-dimensional subspace of $\Real^n$. Denote by
\beqn\label{Ukdef0}
\mathcal{U}^k_{0,\perp}(\cX)\coloneqq\{(\uu_1,...,\uu_k):\uu_i\in\cU(\cX) \text{ or } \uu_i=\mathbf{0}, \uu_i\cdot \uu_j=0, 1\leq i<j\leq k\}
\eeqn
the set of $k$ orthogonal vectors in $\cX$ that either have unit length or are zero. For $q\ge k$ denote by
\beqn\label{Ukdef}
\mathcal{U}^k_\perp(\cX)\coloneqq\{(\uu_1,...,\uu_k):\uu_i\in\cU(\cX), \uu_i\cdot \uu_j=0, 1\leq i<j\leq k\}
\eeqn
the set of $k$ orthonormal vectors that live in the subspace. Using these sets it is possible to parameterize $\hat\Lambda^k_u(\Real^n)$. Namely, given $\bmu\in\hat{\Lambda}_u^k(\R^n)$, 
define $\Phi_{\bmu}:\cU^k_\perp([\bmu])\times \cU^k_{0,\perp}([\bmu]^\perp)\times[0,\pi/2]^k\rightarrow \hat\Lambda^k(\R^n)$ by 
\beqn\label{fareadef}
\Phi_{\bmu}(\bm_1,..,\bm_k,\bu_1,...,\bu_k,\theta_1,...,\theta_k)=\bigwedge_{i=1}^k\big(\cos\theta_i\bm_i+\sin\theta_i\bu_i).
\eeqn
From the theory of angles between subspaces, see Jordan \cite{J1875}, it follows that $\Phi_{\bmu}$ is surjective.

\begin{lemma}\label{parsubspace} 
The gradient of the function $\Phi_{\bmu}$ defined in \eqref{fareadef} is uniformly bounded, meaning there is a $C>0$ such that
\beqn\label{Phibound}
\|\nabla\Phi_{\bmu}\|_{\infty}\coloneqq \sup\{|\nabla_\bA \Phi_{\bmu}| : \bA\in\cU^k_\perp([\bmu])\times \cU^k_{0,\perp}([\bmu]^\perp)\times[0,\pi/2]^k\}<C.
\eeqn
Moreover, $C$ is independent of $\bmu\in \hat\Lambda^k_u(\Real^n)$.
\end{lemma}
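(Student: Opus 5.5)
The plan is to treat $\Phi_{\bmu}$ as the restriction of a polynomial-type map defined on an open neighborhood in a Euclidean space, and to bound its derivative by direct computation. Concretely, view the domain $\cU^k_\perp([\bmu])\times \cU^k_{0,\perp}([\bmu]^\perp)\times[0,\pi/2]^k$ as a subset of $(\Real^n)^k\times(\Real^n)^k\times\Real^k$. Extend the formula \eqref{fareadef} to all of this space by
\[
\tilde\Phi(\bm_1,\dots,\bm_k,\bu_1,\dots,\bu_k,\theta_1,\dots,\theta_k)=\bigwedge_{i=1}^k\big(\cos\theta_i\bm_i+\sin\theta_i\bu_i\big).
\]
The gradient of $\Phi_{\bmu}$ at a point is the restriction of $\nabla\tilde\Phi$ to the tangent space of the domain (a subspace). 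It therefore suffices to bound the operator norm of $\nabla\tilde\Phi$ on the domain. Because a restriction to a subspace does not increase the operator norm, no information about the geometry of $\cU^k_\perp$ or $\cU^k_{0,\perp}$ is needed.

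Next, compute the partial derivatives using the multilinearity of the wedge product. Write $\bv_i\coloneqq\cos\theta_i\bm_i+\sin\theta_i\bu_i$. Then:
\begin{itemize}
\item The derivative in the direction $\delta\bm_i$ is $\bv_1\wedge\dots\wedge\cos\theta_i\,\delta\bm_i\wedge\dots\wedge\bv_k$.
\item In the direction $\delta\bu_i$ it is the same expression with $\sin\theta_i\,\delta\bu_i$ in place of $\cos\theta_i\,\delta\bm_i$.
\item In the direction $\delta\theta_i$ it is the same expression with $(-\sin\theta_i\bm_i+\cos\theta_i\bu_i)\delta\theta_i$ in the $i$-th slot.
\end{itemize}
Each term is estimated with Hadamard's inequality $|\bw_1\wedge\dots\wedge\bw_k|\le\prod|\bw_i|$, which follows from \eqref{kvip} and the Gram determinant bound. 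On the domain, $|\bm_i|=1$, $|\bu_i|\le1$, and $\bm_i\perp\bu_i$, since $\bm_i\in[\bmu]$ and $\bu_i\in[\bmu]^\perp$. Hence $|\bv_i|^2=\cos^2\theta_i+\sin^2\theta_i|\bu_i|^2\le1$, and likewise $|-\sin\theta_i\bm_i+\cos\theta_i\bu_i|\le1$. Each partial derivative therefore has norm at most the norm of the perturbation. Summing over the $3k$ blocks with Cauchy--Schwarz gives $|\nabla_\bA\Phi_{\bmu}|\le\sqrt{3k}$ for every $\bA$ in the domain. One may take $C=\sqrt{3k}+1$ for the required strict inequality.

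Finally, independence from $\bmu$ is immediate. The bound used only the facts that $\bm_i$, $\bu_i$ have length at most one, that $\bm_i\perp\bu_i$, and Hadamard's inequality, none of which depend on $\bmu$. The only step I expect to need care with is the meaning of $\nabla$ on a domain that is not a manifold. The sets $\cU^k_{0,\perp}$ allow zero vectors, and $[0,\pi/2]^k$ has a boundary. This is handled by the extension argument: any reasonable notion of gradient there (one-sided derivatives, or the derivative along the smooth strata) is a restriction of $\nabla\tilde\Phi$ and obeys the same bound. If a Lipschitz bound is what is actually used later, it follows from the same estimate applied along straight segments in a convex neighborhood, which lie in the region where $|\bm_i|,|\bu_i|\le 1$.
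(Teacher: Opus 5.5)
Your argument is correct, but it organizes the computation differently from the paper.

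\textbf{The paper's route (intrinsic).} It fixes $\bA$ and writes down an explicit orthonormal basis of the tangent space of $\cU^k_\perp([\bmu])\times\cU^k_{0,\perp}([\bmu]^\perp)\times[0,\pi/2]^k$ at $\bA$. The basis has four families:
\begin{enumerate}
\item moving a nonzero $\bu_i$ toward a direction $\be_j$ orthogonal to all the $\bu$'s;
\item infinitesimal rotations of the frame $\bm_1,\dots,\bm_k$;
\item infinitesimal rotations among the nonzero $\bu_i$;
\item the angle directions.
\end{enumerate}
It then checks, via multilinearity, that $\nabla_\bA\Phi_{\bmu}$ sends each basis vector to at most two wedges of vectors of length at most one. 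Each image therefore has norm at most $1$ or $2$.

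\textbf{Your route (extrinsic).} You extend $\Phi_{\bmu}$ to the ambient Euclidean space, bound the whole ambient differential with Hadamard's inequality, and restrict to the tangent space. The core estimate is the same in both: one slot of a wedge of vectors of length at most one is perturbed.

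\textbf{What your version gains.}
\begin{itemize}
\item You never need to identify the tangent space. This avoids the bookkeeping on the number $\ell$ of nonzero $\bu_i$, where the paper's list has slips. The rotations in its second family should involve $\bm_i,\bm_j$ rather than $\bu_i,\bu_j$. Its first family has $\ell(n-k-\ell)$ elements, not $k(n-k-\ell)$.
\item The corners of $[0,\pi/2]^k$ cause no trouble.
\item You get an explicit constant $\sqrt{3k}$.
\item The constraints $\bm_i\in[\bmu]$, $\bu_i\in[\bmu]^\perp$, $|\bm_i|,|\bu_i|\le 1$, $\btheta\in[0,\pi/2]^k$ define a convex set on which $|\bv_i|\le 1$ persists. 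So you also get a global Lipschitz bound for free, which suffices for the area-formula estimate \eqref{sukvecarea}.
\end{itemize}
The paper's intrinsic basis, in turn, is the natural starting point if one wants to compute $J_{\Phi_{\bmu}}$ rather than merely bound it.

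\textbf{Two small remarks.}
\begin{itemize}
\item If $|\nabla_\bA\Phi_{\bmu}|$ is read as a Hilbert--Schmidt norm rather than an operator norm, your bound picks up a factor of the square root of the tangent dimension. That factor is still independent of $\bmu$.
\item Your worry about the zero vectors is unnecessary. Unit and zero vectors are at distance one, so $\cU^k_{0,\perp}([\bmu]^\perp)$ is a disjoint union of relatively open and closed Stiefel-type manifolds. The only non-smooth points of the domain come from the boundary of the cube.
\end{itemize}
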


\begin{proof}To prove this, we will show all the directional derivatives of $\Phi_{\bmu}$ at any point are uniformly bounded. Towards that end, begin by fixing
\beqn
\bA=(\bm_1,\dots,\bm_k,\bu_1,\dots,\bu_k,\theta_1,\dots,\theta_k)\in\cU^k_\perp([\bmu])\times \cU^k_{0,\perp}([\bmu]^\perp)\times[0,\pi/2]^k.
\eeqn
Notice that $\dim\,[\bmu]^\perp=n-k$ may be strictly less than $k$. Thus, without loss of generality, we may assume that the first $\ell$ of the $\bu_i$ are nonzero, where $\ell$ is an integer between $1$ and $n-k$, with the rest being zero. Find an orthonormal basis of $\Real^n$ of the form $\{\bm_1,\dots\bm_k,\bu_1,\dots,\bu_\ell,\be_1,\dots\be_{n-k-\ell}\}$. Let $\bzero_k\in(\Real^n)^k$ be the list of $k$ zero vectors and $\bzero_\theta\in\Real^k$ be the $k$-dimensional zero vector that will be associated with the space of angles $[0,\pi/2]^k$. The tangent space $T_\bA(\cU^k_\perp([\bmu])\times \cU^k_{0,\perp}([\bmu]^\perp)\times[0,\pi/2]^k)$ consists of elements of the following form: 

\begin{enumerate}

\item $\{(\bzero_k,\bE_{ij},\bzero_\theta): 1\le i\le \ell, 1\le j\le n-k-\ell\},$ where $\bE_{ij}\coloneqq(\mathbf{0},...,\ee_j,...,\mathbf{0})$ is a list of $k$ vectors with $\ee_j$ in the $i^{\rm th}$ position. This set contains $k(n-k-\ell)$ vectors.

\item $\{(\bF_{ij},\bzero_k,\bzero_\theta): 1\le i<j\le k\}$, where $\bF_{ij}\coloneqq\frac{1}{\sqrt{2}}(\textbf{0},..,\bu_j,...,-\bu_i,...,\textbf{0})$ is a list of $k$ vectors with $\uu_j$ in the $i^{\rm th}$ position and $-\uu_i$ in the $j^{\rm th}$ position. This set contains $k(k-1)/2$ vectors.

\item $\{(\bzero_k,\bF_{ij},\bzero_\theta): 1\le i<j\le \ell\}$, where $\bF_{ij}\coloneqq\frac{1}{\sqrt{2}}(\textbf{0},..,\bu_j,...,-\bu_i,...,\textbf{0})$ is a list of $k$ vectors with $\uu_j$ in the $i^{\rm th}$ position and $-\uu_i$ in the $j^{\rm th}$ position. This set contains $\ell(\ell-1)/2$ vectors.

\item $\{(\bzero_k,\bzero,\bg_i): 1\le i\le k,\},$ where $\bg_{i}\coloneqq(0,\dots,1,\dots,0)$ is a list of $k$ numbers with $1$ in the $i^{\rm th}$ position. This set contains $k$ vectors.

\end{enumerate}

\noindent We now bound the gradient of $\Phi_{\bmu}$ at $\bA$, $\nabla_\bA \Phi_{\bmu}$, acting on each of the vectors listed above. To do so, we use the abbreviation $\bff_i\coloneqq \cos\theta_i\bm_i+\sin\theta_i\bu_i$.

\begin{enumerate}

\item It is true that $|\nabla_\bA \Phi_{\bmu}(\bzero_k,\bE_{ij},\bzero_\theta)|\leq 1$ since
\beqn
\nabla_\bA \Phi_{\bmu}(\bzero_k,\bE_{ij},\bzero_\theta)=\bff_1\wedge...\wedge \underset{i^{\rm th} \text{ slot}}{\sin\theta_i\be_j}\wedge...\wedge\bff_k.
\eeqn

\item It is true that $|\nabla_\bA \Phi_{\bmu}(\bF_{ij},\bzero_k,\bzero_\theta)|\leq 2$ since
\begin{align*}
\hspace{-.5in}\nabla_\bA \Phi_{\bmu}(\bF_{ij},\bzero_k,\bzero_\theta)&=\frac{1}{\sqrt{2}}(\bff_1\wedge...\wedge \underset{i^{\rm th} \text{ slot}}{\cos\theta_i\bm_j}\wedge...\wedge\bff_k)-\frac{1}{\sqrt{2}}(\bff_1\wedge...\wedge \underset{j^{\rm th} \text{ slot}}{\cos\theta_j\bm_i}\wedge...\wedge\bff_k).
\end{align*}

\item  It is true that $|\nabla_\bA \Phi_{\bmu}(\bzero_k,\bF_{ij},\bzero_\theta)|\leq 2$ since
\begin{align*}
\hspace{-.5in}\nabla_\bA \Phi_{\bmu}(\bzero_k,\bF_{ij},\bzero_\theta)&=\frac{1}{\sqrt{2}}(\bff_1\wedge...\wedge \underset{i^{\rm th} \text{ slot}}{\sin\theta_i\bu_j}\wedge...\wedge\bff_k)-\frac{1}{\sqrt{2}}(\bff_1\wedge...\wedge \underset{j^{\rm th} \text{ slot}}{\sin\theta_j\bu_i}\wedge...\wedge\bff_k).
\end{align*}

\item It is true that $|\nabla_\bA \Phi_{\bmu}(\bzero_k,\bzero_k,\bg_i)|\leq 1$ since
\beqn
\hspace{-.4in}\nabla_\bA \Phi_{\bmu}(\bzero_k,\bzero_k,\bg_i)=\bff_1\wedge...\wedge \underset{i^{\rm th} \text{ slot}}{(-\sin\theta_i\bm_i+\cos\theta_i\bu_i)}\wedge...\wedge\bff_k.
\eeqn

\end{enumerate}

\noindent As all of the directional derivatives of $\Phi_{\bmu}$ are bounded independently of $\bA$ and $\bmu$, it follows that $\nabla \Phi_{\bmu}$ is uniformly bounded and this bound is independent of $\bmu$.
\end{proof}

The pervious result shows that the Jacobian $J_{\Phi_{\bmu}}=\sqrt{\det(\nabla \Phi_{\bmu}^\trans\nabla \Phi_{\bmu})}$ is uniformly bounded. It follows that for any positive, integrable function $g$ defined on $\hat\Lambda^k_u(\Real^n)$, the area formula yields
\beqn\label{sukvecarea}
\int_{\hat\Lambda^k_u(\Real^n)} g(\bom)\, d\bom\leq \|J_{\Phi_{\bmu}}\|_{L^\infty} \int\limits_{\cU_\perp^k([\bmu])}\int\limits_{\cU_{0,\perp}^k([\bmu]^\perp)}\mathop{\int...\int}_{\substack{[0,\pi/2]^k}}g(\Phi_{\bmu}(\bm,\bu,\btheta))\,d\btheta d\bu d\bm,
\eeqn
where the notation $\bm=(\bm_1,\dots,\bm_k)\in\cU_\perp^k([\bmu])$, $\bu=(\bu_1,\dots,\bu_k)\in\cU_{0,\perp}^k([\bmu]^\perp)$, and $\btheta=(\theta_1,\dots,\theta_k)\in [0,\pi/2]^k$ has been used.

\begin{lemma}\label{0bowlem}
With $z\in \R^n$, $\bmu\in\hat{\Lambda}_u^k(\R^n)$, $K>0$, and $\a>\s$, define $\cR$ and $\cD_{\cR}$ as in Lemma \ref{lemDr}. If 
\beqn\label{cDcReta}
\cD_\cR(\eta)\coloneqq \{(\aa,\bom,r)\in\cD_\cR:r<\eta\},
\eeqn
then for sufficiently small $\eta>0$,
\[
\int_{\D_{\cR}(\eta)}r^{-1-\s} \, d\H^{w+1}(\ba,\bom,r)\le C\eta^{\a-\s}(-\ln[ K(2\eta)^\a])^{k-1},
\]
which converges to zero as $\eta\downarrow 0$ uniformly in $\bmu.$
\end{lemma}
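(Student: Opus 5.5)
The plan is to combine the radius lower bound of Lemma~\ref{lemDr} with the parametrization of $\hat\Lambda^k_u(\Real^n)$ from \eqref{fareadef} and the area estimate \eqref{sukvecarea}. That reduces everything to estimating the measure of a sublevel set of a product of cosines. By Lemma~\ref{lemDr}, every $(\ba,\bom,r)\in\cD_\cR(\eta)$ satisfies $0<\bom\cdot\bmu\le K(2r)^\a$ and $r<\eta$. Write $t(r)\coloneqq K(2r)^\a$. For fixed $\bom$, the admissible $\ba$ range over $\cU([\bom]^\perp)$, a sphere whose $\cH^{n-k-1}$-measure is a fixed constant. Fubini/coarea on $\cW\times(0,\infty)$, which fibres over $\hat\Lambda^k_u(\Real^n)$ with sphere fibres, then gives
\beqn
\int_{\D_{\cR}(\eta)}r^{-1-\s}\,d\H^{w+1}\le C\int_0^\eta r^{-1-\s}\,\H^{k(n-k)}\big(\{\bom\in\hat\Lambda^k_u(\Real^n): 0<\bom\cdot\bmu\le t(r)\}\big)\,dr .
\eeqn

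Next I would bound the $\bom$-measure by applying \eqref{sukvecarea} with $g$ the indicator of $\{0<\bom\cdot\bmu\le t\}$. By \eqref{padotprod}, $\Phi_{\bmu}(\bm,\bu,\btheta)\cdot\bmu=\prod_i\cos\theta_i$ whenever the $\bu_i$ are nonzero; degenerate $\bu$ configurations form a null set or can be absorbed. The $\bm$ and $\bu$ integrals contribute only constants. Lemma~\ref{parsubspace} gives a Jacobian bound independent of $\bmu$, which is what will yield the uniformity in $\bmu$. It then remains to show
\beqn
\big|\{\btheta\in[0,\pi/2]^k:\textstyle\prod_{i=1}^k\cos\theta_i\le t\}\big|\le C\,t\,(-\ln t)^{k-1}\qquad\text{for small } t .
\eeqn
This is the step I expect to be the main obstacle. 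I would substitute $x_i=\cos\theta_i$, so that $d\theta_i=dx_i/\sqrt{1-x_i^2}$. The weight is singular at $x_i=1$, but there $x_i$ is bounded below, so it does not interact with the smallness of the product. I would therefore split each coordinate into $x_i\le 1/2$, where the weight is bounded, and $x_i>1/2$, where $x_i$ contributes only a bounded factor to the product and the weight is integrable. On each piece the problem reduces to the unweighted estimate $|\{x\in[0,1]^j:\prod x_i\le s\}|=s\sum_{m=0}^{j-1}(-\ln s)^m/m!$ with $j\le k$ and $s\le 2^k t$, which is at most $C t(-\ln t)^{k-1}$. This unweighted identity follows by induction on $j$, integrating out one variable at a time.

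Finally I would insert the bound into the $r$-integral:
\beqn
\int_0^\eta r^{-1-\s}\,K(2r)^\a\big(-\ln[K(2r)^\a]\big)^{k-1}dr .
\eeqn
Since $\a>\s$, the power $r^{\a-\s-1}$ is integrable near $0$. The logarithm grows as $r\downarrow0$, so it cannot simply be pulled out at $r=\eta$. Instead I would use the substitution $r=\eta s$ and $(-\ln[K(2\eta s)^\a])^{k-1}\le C\big((-\ln[K(2\eta)^\a])^{k-1}+(-\a\ln s)^{k-1}\big)$ for $\eta$ small enough that $K(2\eta)^\a<1$. This gives $\int_0^1 s^{\a-\s-1}(1+|\ln s|^{k-1})\,ds<\infty$, and hence the bound $C\eta^{\a-\s}(-\ln[K(2\eta)^\a])^{k-1}$. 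The constant depends only on $n,k,\a,\s,K$ and not on $\bmu$, because the Jacobian bound from Lemma~\ref{parsubspace} is uniform. The right-hand side tends to $0$ as $\eta\downarrow0$, since a positive power of $\eta$ beats any power of $|\ln\eta|$.
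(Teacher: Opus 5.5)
Your argument is correct. It uses the same ingredients as the paper: Lemma~\ref{lemDr}, the coarea formula \eqref{gcoarea}, the bound \eqref{sukvecarea} with the uniform Jacobian from Lemma~\ref{parsubspace}, and the reduction to $\prod_i\cos\theta_i$. The difference is that you integrate in the opposite order.

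The paper fixes $\bom$ and integrates in $r$ first, from $|\bom\cdot\bmu|^{1/\a}/(2K^{1/\a})$ to $\eta$. This leaves the singular weight $\frac{1}{\s}\big(2^\s K^{\s/\a}|\bom\cdot\bmu|^{-\s/\a}-\eta^{-\s}\big)$ to be integrated over $\{0<\bom\cdot\bmu\le K(2\eta)^\a\}$. The paper does this with the substitutions $x_i=1-\frac{2}{\pi}\theta_i$ and $v_i=-\ln x_i$, the coarea formula on the simplices $\{\sum_i v_i=t\}$, and closed forms for the upper incomplete gamma function.

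You instead fix $r$. You then only need the unweighted volume of the sublevel set $\{\prod_i\cos\theta_i\le t\}$, i.e.\ the distribution function of $\bom\mapsto\bom\cdot\bmu$. The singularity is handled afterwards in a one-dimensional $r$-integral by scaling $r=\eta s$. This is a layer-cake version of the same computation. Yours is more elementary and avoids the incomplete gamma functions; the paper's evaluates the reduced angular integral in closed form.

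The step you expected to be the main obstacle is easier than your splitting suggests, though your splitting does work. Since $\cos\theta\ge 1-\frac{2}{\pi}\theta$ on $[0,\pi/2]$, the set $\{\prod_i\cos\theta_i\le t\}$ is contained in $\{\prod_i(1-\frac{2}{\pi}\theta_i)\le t\}$. The measure of the latter is $(\pi/2)^k$ times the cube volume $t\sum_{m=0}^{k-1}(-\ln t)^m/m!$, and this is exactly what the paper does.

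Two small remarks on the rest.
\begin{enumerate}
\item $\Phi_{\bmu}(\bm,\bu,\btheta)\cdot\bmu=\pm\prod_i\cos\theta_i$ for all parameter values. The sign depends on the orientation of $(\bm_1,\dots,\bm_k)$, and the $\bu_i$ drop out because they lie in $[\bmu]^\perp$, whether or not they vanish. Only the absolute value enters the indicator bound, so your caveat about nonzero $\bu_i$ is unnecessary.
\item In the last step you absorb the constant coming from $\int_0^1 s^{\a-\s-1}|\ln s|^{k-1}\,ds$ into the final estimate. This uses $-\ln[K(2\eta)^\a]\ge 1$, which holds for $\eta$ small; it is worth saying so explicitly.
\end{enumerate}
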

\begin{proof}

The arguments in Mihaila and Seguin \cite{MS25}[Lemma~4] show that for an integrable function $h$, the coarea formula yields 
\begin{equation}\label{gcoarea}
\int_{\W}h(\ba,\bom)\, d\H^w(a,\bom)=2^{k/2}\int_{\hat{\Lambda}_u^k(\R^n)}\int_{\cU([\bom]^\perp)}h(\ba,\bom)\, d\H^{k(n-k)}(\bom)d\H^{n-1}(\ba).
\end{equation}
Now set 
\[
\hat{\Lambda}_u^k(\eta)\coloneqq \{\bom\in \hat{\Lambda}_u^k(\R^n):0<\bom\cdot \bmu\le K(2\eta)^\alpha\}
\]
and $\ve\coloneqq K(2\eta)^\a$. Then, using \eqref{gcoarea} and Lemma~\ref{lemDr}, followed by a calculation, shows that
\begin{align}
\int_{\D_{\cR}(\eta)}r^{-1-\s} \, d\H^w(\ba,\bom,r)&\le2^{k/2}\int_{  \hat{\Lambda}_u^k(\eta)}\int_{\cU([\bom]^\perp)}\int_{\frac{|\bom\cdot \bmu|^{1/\alpha}}{2K^{1/\a}}}^\eta r^{-1-\s}\,drd\ba d\bom \\
&=\frac{2^{k/2}\om_{n-k-1}}{\s}\int_{\hat{\Lambda}_u^k(\eta)}\Big(\frac{2^{\s}K^{\s/\alpha}}{|\bom\cdot\bmu|^{\s/\a}}-\eta^{-\s}\Big)\,d\bom,
\end{align}
where $\omega_{n-k-1}$ is the surface area of the unit ball in $\Real^{n-k}$. Next, using \eqref{sukvecarea}, utilizing the notation $\btheta=(\theta_1,\dots,\theta_k)\in[0,\pi/2]^k$, $\bm=(\bm_1,\dots,\bm_k)\in\cU_{0,\perp}^k([\bmu])$, and $\bu=(\bu_1,\dots,\bu_k)\in\cU_\perp^k([\bmu]^\perp)$, and the fact that
\beqn
|\Phi_{\bmu}(\bm,\bu,\btheta)\cdot\bmu|=\prod_{i=1}^k\cos\theta_i,
\eeqn
the calculation can be continued as
\begin{align}
\int_{\hat{\Lambda}_u^k(\eta)}\Big(\frac{2^{\s}K^{\s/\alpha}}{|\bom\cdot\bmu|^{\s/\a}}-\eta^{-\s}\Big)\,d\bom&\leq \|J_{\Phi_{\bmu}}\|_{L^\infty} \int\limits_{\cU_\perp^k([\bmu])}\int\limits_{\cU_{0,\perp}^k([\bmu]^\perp)}\mathop{\int...\int}_{\substack{\prod_{i=1}^k\cos\theta_i\le \ve\\\theta_i\in[0,\pi/2]}}\big(2^\s K^{\s/\a}\prod_{i=1}^k\cos^{-\s/\a}\theta_i-\eta^{-\s}\big)d\btheta d\bu d\bm\\
&=C_1 \mathop{\int...\int}_{\substack{\prod_{i=1}^k\cos\theta_i\le \ve\\\theta_i\in[0,\pi/2]} }\Big(2^\s K^{\s/\a}\prod_{i=1}^k\cos^{-\s/\a}\theta_i-\eta^{-\s}\Big)\,\d\btheta,
\end{align}
where
\begin{equation}\label{C1const}
C_1\coloneqq \|J_{\Phi_{\bmu}}\|_{L^\infty} \int\limits_{\cU_\perp^k([\bmu])}\int\limits_{\cU_{0,\perp}^k([\bmu]^\perp)}d\bu d\bm<\infty.
\end{equation}
After utilizing the inequality $1-\frac{2}{\pi}\theta_i\leq\cos\theta_i$ for $\theta_i\in[0,\pi/2]$, we apply two substitutions: first $x_i=1-\frac{2}{\pi}\theta_i$ and then $v_i=-\ln x_i$, to get 
\begin{align}
\mathop{\int...\int}_{\substack{\prod_{i=1}^k\cos\theta_i\le \ve\\ \theta_i\in[0,\pi/2]} }\Big(2^\s K^{\s/\a}\prod_{i=1}^k\cos^{-\s/\a}\theta_i-\eta^{-\s}\Big)\,\d\btheta&\leq\mathop{\int...\int}_{\substack{\prod_{i=1}^k(1-\frac{2}{\pi}\theta_i)\le \ve\\ \theta_i\in[0,\pi/2]} }\Big(2^\s K^{\s/\a}\prod_{i=1}^k\Big(1-\frac{2}{\pi}\theta_i\Big)^{-\s/\a}-\eta^{-\s}\Big)\,\d\btheta\\
&\hspace{-1in}= \mathop{\int...\int}\limits_{\substack{\prod_{i=1}^k x_i\le \ve\\ x_i\in[0,1]} }\left(2^\s K^{\s/\a}\prod_{i=1}^k x_i^{-\s/\a}-\eta^{-\s}\right)\Big(\frac{\pi}2\Big)^{k}\,d\bx\\
&\hspace{-1in}=\Big(\frac{\pi}2\Big)^{k} \mathop{\int...\int}\limits_{\substack{\sum_{i=1}^k v_i\ge -\ln \ve\\ v_i\in[0,\infty)} }\left[2^\s K^{\s/\a}\exp\Big(\sum_{i=1}^k\frac{\s}{\a}v_i\Big)-\eta^{-\s}\right]\exp\Big(\sum_{i=1}^k v_i\Big)\, d\bv.
\end{align}
Next, use the coarea formula where the domain is sliced using the level sets of the mapping $\bv\mapsto v_1+\dots+v_k$ and the fact that for any $t\in(0,\infty)$,
\beqn
\cH^{k-1}\Big(\big\{\bv\in\Real^k : \sum_{i=1}^kv_i=t,v_i\ge 0\big\}\Big)=\frac{\sqrt{k}t^{k-1}}{(k-1)!}
\eeqn
to find that
\begin{align}
\mathop{\int...\int}\limits_{\substack{\sum_{i=1}^k v_i\ge -\ln \ve\\ v_i\in[0,\infty)} }&\left[2^\s K^{\s/\a}\exp\Big(\sum_{i=1}^k\frac{\s}{\a}v_i\Big)-\eta^{-\s}\right]\exp\Big(\sum_{i=1}^k v_i\Big)\, d\bv \\
&\hspace{.5in}=\int_{-\ln(\ve)}^\infty k^{-1/2}\int_{\sum_{i=1}^k v_i=t}\left[2^\s K^{\s/\a}e^{\frac{\s}{\a}t}-\eta^{-\s}\right]e^{-t}\, \d\H^{k-1}(\bv)\, dt\\
&\label{twointegrals}\hspace{.5in}=\int_{-\ln(\ve)}^\infty \frac{t^{k-1}}{(k-1)!}\left[2^\s K^{\s/\a}e^{-(1-\frac{\s}\a) t}-\eta^{-\sigma}e^{-t}\right]\, dt.
\end{align}
If $k=1$, then 
\begin{align*}
\int_{-\ln(\ve)}^\infty \frac{t^{k-1}}{(k-1)!}\left[2^\s K^{\s/\a}e^{-(1-\frac{\s}\a) t}-\eta^{-\sigma}e^{-t}\right]\, dt&=\int_{-\ln(\ve)}^\infty\left[2^\s K^{\s/\a}e^{-(1-\frac{\s}\a) t}-\eta^{-\sigma}e^{-t}\right]\,dt\\
&=2^\a K\eta^{\a-\s}\frac{\s}{\a-\s}.
\end{align*}
For $k>1$, \eqref{twointegrals} is the difference of two integrals. We represent the integrals in terms of the upper incomplete gamma function, which has a known formula when $k\in\Nat$ and $k>1$:
\[
\Gamma(k,x)\coloneqq \int_x^\infty t^{k-1}e^{-t}\, dt=(k-1)! e^{-x}\sum_{i=1}^{k-1}\frac{x^i}{i!};
\]
see Jameson \cite{J2016}. For the first of the integrals in \eqref{twointegrals}, applying the substitution $u=(1-\frac{\s}\a) t$ we get
\begin{align*}
\int_{-\ln(\ve)}^\infty \frac{t^{k-1}}{(k-1)!}2^\s K^{\s/\a}e^{-(1-\frac{\s}\a) t}\, dt
&=\Big(1-\frac{\s}{\a}\Big)^{-k}\int_{-(1-\tfrac{\s}{\a})\ln \ve}^\infty \frac{u^{k-1}}{(k-1)!}2^\s K^{\s/\a}e^{-u}\, du\\
&=\Big(1-\frac{\s}{\a}\Big)^{-k}\frac{2^\s K^{\s/\a}}{(k-1)!}\Gamma\big(k, -(1-\tfrac{\s}{\a})\ln\ve\big )\\
&=\Big(1-\frac{\s}{\a}\Big)^{-k} 2^\s K^{\s/\a}\ve^{1-\tfrac{\s}{\a}}\sum_{i=1}^{k-1}\frac{(-(1-\tfrac{\s}{\a})\ln \ve)^i}{i!}.
\end{align*}
The second integral in \eqref{twointegrals} can be expressed as
\beqn
\int_{-\ln(\ve)}^\infty \frac{t^{k-1}}{(k-1)!}\eta^{-\sigma}e^{-t}\, dt=\frac{\eta^{-\sigma}}{(k-1)!}\Gamma(k,-\ln \ve)=\eta^{-\s}\ve\sum_{i=1}^{k-1}\frac{(-\ln \ve)^i}{i!}.
\eeqn
As $\ve=K(2\eta)^\alpha$, for small $\eta>0$, 
\begin{align}
\int_{-\ln(\ve)}^\infty \frac{t^{k-1}}{(k-1)!}\Big[2^\s K^{\s/\a}&e^{-(1-\frac{\s}\a) t}-\eta^{-\sigma}e^{-t}\Big]\, dt\\
&=\eta^{\a-\s}K2^{\a}\left(\Big(1-\frac{\s}{\a}\Big)^{-k} \sum_{i=1}^{k-1}\frac{(-(1-\tfrac{\s}{\a})\ln \ve)^i}{i!}-\sum_{i=1}^{k-1}\frac{(-\ln \ve)^i}{i!}\right)\\
&=\eta^{\a-\s}K2^{\a}\sum_{i=1}^{k-1}\frac{[(1-\tfrac{\s}{\a})^{i-k}-1](-\ln \ve)^i}{i!}\\
&\le \eta^{\a-\s}K2^{\a}(k-1)((1-\tfrac{\s}{\a})^{1-k}-1)(-\ln \ve)^{k-1}.
\end{align}
Thus, for any $k\in\{1,\dots,n-1\}$, there is a constant $C$ dependent only on $K, \a,\s,k,n$, and $C_1$ from \eqref{C1const}, such that 
\[
\int_{\D_{\cR}(\eta)}r^{-1-\s} \, d\H^w(\ba,\bom,r)\le C\eta^{\a-\s}(-\ln[ K(2\eta)^\a])^{k-1}.
\]
Notice that, because $\alpha>\sigma$, as $\eta\downarrow 0,$ the integral converges to 0. 
\end{proof}


\section{First variation of the fractional measure} \label{sec:4}
\setcounter{section}{4} \setcounter{equation}{0} 
In this section we compute the first variation of the fractional $k$-dimensional measure that was introduced by Mihaila and Seguin \cite{MS25}. As mentioned in the introduction, given an open, bounded $\Om\subseteq\Real^n$, the $k$-dimensional $\s$-measure of $\M$ relative to $\Om$ is defined by
\begin{equation}
\text{Meas}_{\s}^k(\M,\Om)\coloneqq\int_{\D(\M)}r^{k-n-\s}  \sup_{\mathbf{u}\in \cU([\bom]^\perp)}\chi_{\Om}(p+r\mathbf{u})\,d \H^{d}(p,\bom,r).
\end{equation}
where $\cD(\cM)\coloneqq \cD_{\rm odd}$ is defined in \eqref{Dodd}. For simplicity, assume that $\overline\M\subseteq\Om$. As is done in minimal surface theory, the first variation is computed relative to all manifolds that have the same boundary.

\begin{theorem}
Assume that $\M$ has $C^{1,\a}$ regularity with $\a>\s$. For $z\in\M$, let $\vol(z)\in \hat{\Lambda}_u^k(\R^n)$ denote a volume form for $T_z\M$, so $[\vol(z)]=T_z\M$. A necessary and sufficient condition for the vanishing of the first variation of $\text{\rm Meas}^k_\s(\M,\Om)$ with respect to compact, $k$-dimensional manifolds with the same boundary as $\M$ is that for all $z\in\M$, 

\beqn\label{variation}
\lim_{\e\downarrow 0}\left(\int_{\A_{\rm e}^+(z,\e)}-\int_{\A_{\rm o}^+(z,\e)}\right)\frac{(\aa\wedge\bom)\llcorner\vol(z)}{r^{1+\s}}\,d\H^{w+1}(\aa,\bom,r)=\bzero,
\eeqn
where $w=\text{\rm dim}\,\cW=n-1+k(n-1-k)$ and
\begin{align}
\A_{\rm o}^+(z,\e)&\coloneqq \{(\ba,\bom,r)\in\W\times(\e,\infty): \H^0(D(z+r\aa,\bom,r)\cap \M) \mbox{ is odd},\ \bom\cdot \vol(z)>0\},\\
\A_{\rm e}^+(z,\e)&\coloneqq \{(\ba,\bom,r)\in\W\times(\e,\infty): \H^0(D(z+r\aa,\bom,r)\cap \M) \mbox{ is even},\ \bom\cdot \vol(z)>0\}.
\end{align}
\end{theorem}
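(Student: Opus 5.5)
We will compute the first variation by moving the manifold rather than the disks. Fix a smooth vector field $\bu:\Real^n\to\Real^n$ with compact support in $\Om$ vanishing on $\partial\M$, and let $\bff_t(x)=x+t\bu(x)$, $\M_t=\bff_t(\M)$. Since $\partial\M_t=\partial\M$ and $\overline\M_t\subseteq\Om$ for small $t$, the weight $\sup_{\bu}\chi_\Om$ equals $1$ on all disks of $\cD(\M_t)$ that matter near $\M$. The fractional measure becomes $\int_{\cD(\M_t)}r^{k-n-\s}\,d\H^d$. We extend $\bff_t$ to a flow on $\cD$, say $\bF_t(p,\bom,r)$, chosen so that $\bF_t(\cD(\M))=\cD(\M_t)$ up to $\H^d$-null sets. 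Differentiating the volume of a moving set of finite perimeter is then handled by the transport theorem from Appendix~\ref{secA}. By Proposition~\ref{oddFP} we may apply it to $\cD_{\rm odd}$. The result is that $\frac{d}{dt}\big|_{t=0}$ is a boundary integral over $\partial^*\cD_{\rm odd}\cong(\cD_{\partial\M1}\cup\cD_{\partial D1})\setminus\cD_{\rm tan}$ of $r^{k-n-\s}\,\bV\cdot\bM$, where $\bV$ is the normal velocity of the boundary.

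On $\cD_{\partial\M1}$ the velocity vanishes because $\bu=0$ on $\partial\M$, so only $\cD_{\partial D1}\setminus(\cD_{\partial\M}\cup\cD_{\rm tan})$ contributes. There the boundary moves only through the point $z\in\partial D\cap\M$, with velocity $\bu(z)$. We therefore parametrize this piece by $\Psi(z,\ba,\bom,r)=(z+r\ba,\bom,r)$ on $\hat\cA_{\partial D1}$ (Lemma~\ref{lempDemb}) and use the area formula with the Jacobian \eqref{PsiJacobian}. The normal velocity is $(\bu(z),\bzero,0)\cdot\bM$. By Proposition~\ref{ExtNormal} this equals $\pm\bu(z)\cdot[(z-p)\wedge\bom]\llcorner\vol(z)/|\bm|$, which is $\mp r\,\bu(z)\cdot(\ba\wedge\bom)\llcorner\vol(z)/|\bm|$. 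The sign is determined by the parity of the crossings together with the sign of $\bom\cdot\vol(z)$.

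We expect $|\bm|$ to cancel against the Jacobian of $\Psi$ up to a factor $r^{n-k-1}$. This uses Proposition~\ref{wedgenorm} together with the identity $|\bm|^2=r^2(|(\ba\wedge\bom)\llcorner\vol|^2+|\bP_{\bom}(\ba\lrcorner\vol)|^2)+r^2(\vol\cdot\bom)^2$. Combining powers, $r^{k-n-\s}\cdot r\cdot r^{n-k-1}=r^{-\s}$, and after the scaling of the parametrization the integrand becomes $r^{-1-\s}\,(\ba\wedge\bom)\llcorner\vol(z)$. We then replace $(\ba,\bom)$ by $(\ba,-\bom)$ to reduce to $\bom\cdot\vol(z)>0$. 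This should yield
\[
\delta\,\mathrm{Meas}^k_\s(\M,\Om)[\bu]=\int_\M\bu(z)\cdot\lim_{\e\downarrow0}\Big(\int_{\A^+_{\rm e}(z,\e)}-\int_{\A^+_{\rm o}(z,\e)}\Big)\frac{(\ba\wedge\bom)\llcorner\vol(z)}{r^{1+\s}}\,d\H^{w+1}\,d\H^k(z),
\]
possibly up to an overall constant and sign. Since $\bu$ restricted to $\M$ is an arbitrary smooth field vanishing near $\partial\M$, the fundamental lemma gives the necessity and sufficiency of \eqref{variation}. Continuity in $z$ is needed to pass from almost every $z$ to every $z$.

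The main obstacle is justifying the principal value. The inner integrand is not absolutely integrable near $r=0$, so we truncate at $r>\e$, apply the transport theorem on $\{r>\e\}$, and show that the small-disk contribution vanishes uniformly as $\e\to0$. Near $z$, the $C^{1,\a}$ hypothesis places $\M$ inside the bowtie region $\cR$ of \eqref{bowtie} with $\bmu=\vol(z)$ and uniform $K$. Disks of radius $<\eta$ meet $\M$ near $z$ with even/odd parity only if they intersect $\cR$, or else they pair off by the symmetry $\ba\mapsto-\ba$ and cancel. Lemma~\ref{0bowlem} bounds the remainder by $C\eta^{\a-\s}|\ln\eta|^{k-1}\to0$ uniformly in $z$. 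This uniformity justifies exchanging the limit with the $z$-integral and the $t$-derivative, via dominated convergence.
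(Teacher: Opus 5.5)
Your overall route is the paper's. You truncate small radii and apply the transport theorem of Appendix~\ref{secA2}, with $\partial^*\cD(\M_t)$ parametrized by $\Xi$ (static, since $\bu=\bzero$ on $\partial\M$) and by $(z,\ba,\bom,r)\mapsto\Psi(z+t\bu(z),\ba,\bom,r)$. You evaluate $\bv\cdot\bM$ via Proposition~\ref{ExtNormal}, \eqref{PsiJacobian} and Proposition~\ref{wedgenorm}, fold onto $\bom\cdot\vol>0$, and control the principal value with the bowtie and Lemma~\ref{0bowlem}. The flow $\bF_t$ on all of $\cD$ is neither constructed nor needed, since the transport theorem only asks for a parametrization of the reduced boundary satisfying (\textit{D1})--(\textit{D3}).

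Your cancellation symmetry differs from the paper's and is a valid, slightly simpler variant. The paper uses $(\ba,\bom)\mapsto(\bQ_{t,z}\ba,\bQ_{t,z}\bom)$ with $\bQ_{t,z}=\bP_{\vol_t(z)}-\bP^\perp_{\vol_t(z)}$, which reflects the disk across the tangent plane, and it needs the computation \eqref{restodd} to get oddness. Your $\ba\mapsto-\ba$ is the point reflection $x\mapsto 2z-x$ of the disk. The set $\cR$ is invariant under it too, the condition $\bom\cdot\vol>0$ is untouched, and oddness is immediate because the integrand is linear in $\ba$.

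Two steps, however, are wrong as written.

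First, you cannot drop the weight $\sup_{\bv\in\cU([\bom]^\perp)}\chi_\Om(p+r\bv)$. It vanishes on large disks whose boundary sphere misses $\Om$, and it is what makes the functional finite. For instance, if $\partial\M\neq\emptyset$, the disks with radius in $(r,r+1)$ that contain $\M$ well inside and cross it an odd number of times have $\H^d$-measure of order $r^{n-k}$. So $\int_{\cD(\M_t)}r^{k-n-\s}\,d\H^d$ behaves like $\int^\infty r^{-\s}\,dr=\infty$; the truncation $r>\e$ does not cure this, and (\textit{F1}) fails. Keep $f=r^{k-n-\s}\chi_{\{r>\e\}}\sup_{\bv}\chi_\Om(p+r\bv)$, which is $t$-independent. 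Then note that the weight equals $1$ on the moving piece $\cD_{\partial D1}(t)$, because those disks carry the point $z\in\M_t\subseteq\Om$ on their boundary sphere. That is why $\Om$ does not appear in \eqref{variation}.

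Second, your formula for $|\bm|^2$ is off. The second component of $\bm$ is $r^2\,\ba\wedge\bP_{\bom}(\ba\lrcorner\vol)$, whose squared norm is $r^4|\bP_{\bom}(\ba\lrcorner\vol)|^2$ by Lagrange's identity, since $\ba\perp[\bom]$. Hence $|\bm|^2=r^2[(1+r^2)|\bP_{\bom}(\ba\lrcorner\vol)|^2+2(\vol\cdot\bom)^2]$ and $J_\Psi=2^{-k/2}r^{n-k-2}|\bm|$. With your version the factor $(1+r^2)$ in $J_\Psi$ would not cancel. With the correct one the power is $r^{k-n-\s}\cdot r\cdot r^{n-k-2}=r^{-1-\s}$ directly, with no extra ``scaling'' step, and the constant after folding is $2^{1-k/2}$.

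Finally, exchanging $\lim_{\e}$ with $d/dt$ needs the estimate of Lemma~\ref{0bowlem} uniformly in $t$. So $K$ must be uniform over $z\in\M_t\cap\spt(\bu)$ and $t$ near $0$ (the paper's Step One). The exchange then rests on uniform convergence of the truncated derivatives together with convergence of the truncated functionals, not on dominated convergence.
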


\begin{proof}
Fix $\eta>0$ and define
\[
f_{\eta}(p,\bom, r)\coloneqq g_{\eta}(r)\sup_{\aa\in \cU([\bom]^{\perp})}\chi_{\Om}(p+r\aa), \qquad (p,\bom,r)\in\D,
\]
where 
\[
g_{\eta}(r)\coloneqq 
\begin{cases}
0&\mbox{ if }r\le \eta,\\
r^{k-n-\s}&\mbox{ if }r>\eta,
\end{cases}
\]
so that $f_\eta$ is bounded. Furthermore, define the `truncated' $\s$-measure by 
\begin{equation}
\text{Meas}_{\s,\eta}^k(\M,\Om)\coloneqq\int_{\D(\M)}f_\eta  \,d \H^{d}.
\end{equation}
By the dominated convergence theorem, this truncated version converges to $\text{Meas}_{\s}^k(\cM,\Omega)$ as $\eta\downarrow 0$. To establish the desired result, we will compute the first variation of this truncated $\s$-measure and then send $\eta$ to zero. 

Consider $z_\circ\in\M$ and $\bw\in C^{\infty}_c(\R^n,\R^n)$ such that $\bw(z_\circ)\neq \bzero$ and the support of $\bw$, $\spt(\bw)$, does not intersect $\pa \M$. For each $t\in\R$, define the set 
\[
\M_t\coloneqq \{z+t\bw(z):z\in\M\}.
\]
Due to the regularity of $\M$, there is a sufficiently small open interval $\cI$ containing zero such that for $t\in\cI$, $\M_t$ is a $C^{1,\a}$, $k$-dimensional manifold contained in $\Om$. Let $\vol_t$ be the induced volume form on $\cM_t$ coming from the volume form $\vol$ on $\cM$. Moreover, $\overline{\cM}_t$ is a manifold with boundary such that $\partial\cM_t=\partial\cM$ for all $t\in\cI$.
\medskip

\noindent\textit{Step One}: We show that that near each $z\in\cM_t$, $\cM_t$ is contained in a set of the form \eqref{bowtie}. For each $t\in \cI$ and $z\in \M_t$, there is a parametrization of $\M_t$ near $z$ of the form 
\[
\xi_{t,z}(\bu)=z+\bu+\bphi_{t,z}(\bu),\qquad \bu\in \cE_{t,z},
\]
where $ \cE_{t,z}$ is an open neighborhood of zero in $T_z\M_t$ and $\bphi_{t,z}:\cE_{t,z}\rightarrow \R^n$ satisfies 
\beqn\label{gzprop}
\bphi_{t,z}(\bzero)=\bzero,\quad \nabla \bphi_{t,z}(\bzero)=\bzero,\quad \mbox{and}\quad \bphi_{t,z}(\bu)\in \big(T_z\M_t\big)^{\perp}, \qquad \bu\in  \cE_{t,z}.
\eeqn
As $\M_t$ is $C^{1,\a}$, so are the functions $\bphi_{t,z}$. One can choose the interval $\cI$ small enough so  that, (i) there is a $M>0$ such that for all $t\in \cI$ and $z\in \cM_t\cap \text{spt}(\bw)$, if $\bu\in T_z\cM_t$ and $|\bu|<M$, then $\bu\in \cE_{t,z}$ and (ii) the H\"{o}lder constants for $\nabla\bphi_{t,z}$ can be chosen uniformly in $t\in \cI$ and $z\in\cM_t\cap\spt(\bw)$. Thus, by \eqref{gzprop}$_{1,2}$ and using the characterization of $C^{1,\a}$ functions from Andersson \cite{A1997}, there is $K>0$ such that 
\beqn\label{uniformK}
|\bphi_{t,z}(\bu)|\le K|\bu|^{1+\a},\qquad |\bu|<M,\ z\in\M_t\cap\spt(\bw),\ t\in \cI.
\eeqn
For $z\in \M_t\cap\spt(\bw)$, let $\cR(z)$ be the set defined in \eqref{bowtie} with $\bmu=\vol_t(z)$ and $K$ as in \eqref{uniformK}. As $\bphi_{t,z}$ is a local parametrization for $\M_t$ near $z$ with \eqref{gzprop}$_3$ and \eqref{uniformK}, there is a $R>0$ such that, if $B(z,R)$ is the open ball of radius $R$ centered at $z$, then \begin{equation}\label{cMcR}
B(z,R)\cap\M_t\subseteq \overline{\cR(z)},\qquad  z\in\M_t\cap\spt(\bw),\, t\in\cI.
\end{equation}

\medskip
\noindent\textit{Step Two}: We lay the groundwork to apply Theorem~\ref{thmATTmod}, which provides a formula for the derivative with respect to $t$ of $\text{Meas}_{\s,\eta}^k(\M_t,\Om)$. Begin by setting
\[
\cA_{\pa \M}\coloneqq \pa \M\times \W \times \R^+_0\times \R^+
\quad \text{and}\quad
\cA_{\pa D}\coloneqq \M\times \W \times \R^+.
\]
If we define
\[
\Xi:\overline\M\times\W\times\R_0^+\times \R^+\rightarrow\R^n\times\Lambda_u^k(\R^n)\times\R
\]
by 
\begin{equation}\label{xithm}
\Xi(z,\textbf{a}, \bom, \xi, r)\coloneqq(z+\xi \textbf{a},\bom, r),\qquad (z,\textbf{a},\bom,\xi,r)\in \overline\M\times\W\times \R^+_0\times\R^+,
\end{equation}
then $\D_{\pa \M}\subseteq \Xi(\cA_{\pa\M})$. Similarly, for 
\[
\Psi: \overline{\M}\times \W \times \R^+\rightarrow \R^n\times \hat{\Lambda}^k_u(\R^n) \times\R^+
\]
defined by
\begin{equation}\label{Psi}
\Psi(z,\textbf{a},\bom,r)\coloneqq(z+r\textbf{a},\bom,r), \qquad (z,\textbf{a},\bom,r)\in \overline{\M}\times\W\times \R^+,
\end{equation}
we have $\D_{\pa D}\subseteq\Psi(\cA_{\pa D})$.

Let $\cN$ be the $(d-1)$-dimensional manifold defined as the disjoint union of $\cA_{\pa \M}$ and $\cA_{\pa D}$, and define the function $\Theta:\cI\times\cN\rightarrow \D$ by 
\[
\Theta(t,x)\coloneqq \begin{cases}
\Xi(x)&\mbox{ if }x\in \cA_{\pa \M},\\
\Psi(z+t\bw(z),\aa,\bom,r) &\mbox{ if }x=(z,\aa,\bom,r)\in \cA_{\pa\D}.
\end{cases}
\] 
Set $\Theta_t\coloneqq \Theta(t,\cdot)$, and write $\Theta_t'$ for the a partial derivative of $\Theta_t$ with respect to $t$. Since $\Xi$ and $\Psi$ are $C^1$, so is $\Theta$. Using the notation introduced in \eqref{cong}, it follows from Items \ref{paD2}--\ref{paMD} of Lemma \ref{lemmeasure2} and Proposition \ref{oddFP} applied to $\M_t$ that 
\[
\Theta_t(\cN)\cong\D_{\pa\M1(t)}\cup\D_{\pa D1(t)}\cong\pa^*\D(\M_t),
\]
where 
\begin{align*}
\D_{\pa \M 1}(t)&\coloneqq\{(p,\bom,r)\in\D:\H^0(\overline{D}(p,\bom,r)\cap \pa \M_t)=1 \},\\
\D_{\pa D 1}(t)&\coloneqq\{(p,\bom,r)\in \D: \H^0(\pa D(p,\bom,r)\cap \M_t)= 1\}.
\end{align*}
Also, from Items \ref{paD2}--\ref{paMD} of Lemma \ref{lemmeasure2}, we know
\[
\H^{d-1}(\{(p,\bom,r)\in\pa^*\D(\M_t):\H^0(\Theta_t^{-1}(\{(p,\bom,r)\})>1\}))=0.
\]
As the gradients of the functions $\Xi$ and $\Psi$ are injective almost everywhere on $\cA_{\pa \M}$ and $\cA_{\partial D}$, respectively, it follows that the gradient of $\Theta_t$ is also injective almost everywhere for sufficiently small $t$. Assume $\cI$ is chosen small enough so that the gradient of $\Theta_t$ is injective almost everywhere for all $t\in\cI$. Thus, $\Theta$ satisfies the conditions \ref{D1}-\ref{D3} with $\cG_t$ replaced by $\D(\M_t)$. 

To use Theorem \ref{thmATTmod} it remains to verify properties \ref{F1}-\ref{F5}. Using the notation \eqref{veldef} and \eqref{defcF} with $\cG_t$ replaced by $\D(\M_t)$, the velocity $\bv$ associated with $\Theta$ is given by 
\beqn
\bv(t,p,\bom,r)=\begin{cases}
(\bzero,\bzero,0)&\mbox{if }(p,\bom,r)\in \D_{\pa \M_1}(t),\\
(\bw(z),\bzero,0) &\mbox{if }(p,\bom,r)\in \D_{\pa D1}(t),
\end{cases}
\qquad (t,p,\bom,r)\in\pa^*\F,
\eeqn
where $z\in\M_t$ is the unique point in $\pa D(p,\bom,r)\cap \M_t$ for $(p,\bom,r)\in \D_{\pa D1}(t)$. Similar to the notation $\Theta_t$, we will use $\bv_t\coloneqq \bv(t,\cdot)$. To see that \ref{F1} is true, note that $f_\eta$ is bounded and $f_\eta\in L^1(\D(\M_t))$ since $\text{Meas}_{\s,\eta}(\M_t,\Om)$ is finite for all $t\in\cI$, as argued in Section 4 of \cite{MS25}. As $f_\eta$ is bounded, it is locally integrable. Thus, $\cH^d$-a.e.~$(p,\bom,r)$ is a Lebesgue point of $f_\eta$. Viewing $f_\eta$ as a function on $\cI\times\cD$, being independent of $t$, it follows that \ref{F2} holds.  Let $\bM_t$ be the normal from Proposition \ref{ExtNormal} with $\cM$ replaced by $\cM_t$. For any bounded interval $\cJ\subseteq \cI$, one can use Proposition \ref{ExtNormal} and the change of variables from Lemma~\ref{COVPsilem} to get 

 \begin{align*}
 \int_{\cJ}\int_{\pa^*\cD(\cM_t)}|f_\eta\bv_t\cdot\bM_t| d\H^{d-1}dt&=\int_{\cJ}\int_{\D_{\pa D1}(t)} f_\eta|\bv_t\cdot \bM_t| d\H^{d-1}dt\\
 &\leq\int_{\cJ}\int_{\M_t}\int_{\W\times(\eta,\infty)} \frac{|\bw(z)|}{r^{1+\s}}\, d\H^{m+1}(\ba,\bom,r)dzdt\,. 
 \end{align*}
  The last integral is finite since $\bw$ is smooth with compact support and $\cJ$ is bounded, so \ref{F3} holds. Using the area formula and the properties of $\Theta_t$, with $J_{\Theta_t}$ denoting the Jacobian of $\Theta_t$, notice
 \begin{align*}
 \int_{\pa^*\D(\M_t)} f_\eta\bv_t\cdot\bM_t\,\d\H^{d-1}&=\int_{\Theta_t(\cA_{\pa \D})}f_\eta\bv_t\cdot\bM_t\, d\H^{d-1}\\
 &=\int_{\cA_{\pa}}(f_\eta\circ\Theta_t)(\Theta_t'\cdot\bM_t\circ\Theta_t)J_{\Theta_t}\, d\H^{d-1}\\
 &=\int_{\M}\int_{\W}\int_\eta^\infty\frac{(\Theta'_t\cdot \bM_t\circ\Theta_t)J_{\Theta_t}}{r^{-k+n+\s}}\, drd\H^{w}dz.
 \end{align*}
From the regularity of $\Theta$, it follows that the above expression is continuous in $t$. Since $f_\eta'=0$, we know that conditions \ref{F1}-\ref{F5} hold. 

\medskip
\noindent\textit{Step Three}: We can now apply Theorem \ref{thmATTmod} to differentiate $\text{Meas}_{\s,\eta}(\M_t,\Om)$ with respect to $t$ and perform a change of variables. The theorem gives
\[
\big(\text{Meas}_{\s,\eta}(\M_t,\Om)\big)'=\int_{\pa^*\D(\M_t)}f_\eta\bv_t\cdot \bM_t\, d\H^{d-1}.
\]
Using Proposition \ref{ExtNormal}, we can plug in the expression for $\bM_t$ and apply the change of variables formula \eqref{COVPsilem}. This gives an integral over a subset of $\M_t\times \W\times (0,\infty)$, which is symmetric under the transformation $\bom\mapsto -\bom$. As the resulting integrand is even under this transformation, we can simplify the computation by only integrating over those $(\ba,\bom)\in\W$ such that $\bom\cdot \vol_t(z)>0$ and doubling the result. Utilizing Proposition \ref{wedgenorm} yields
\[
\big((\bv_t\circ\Psi)\cdot(\bM_t\circ\Psi) J_\Psi\big)(z,\ba,\bomega,r)=\frac{\bw(z)\cdot[-\ba\wedge\bomega]\llcorner \vol_t(z)r^{n-k-1}}{2^{k/2}}
\]
and, so,
\beqn\label{varMeseta}
\big(\text{Meas}_{\s,\eta}(\M_t,\Om)\big)'=\int_{\M_t}\int_\W\int_\eta^\infty\frac{2^{1-k/2}\zeta(t,z,\ba,\bom,r)\bw(z)\cdot(\ba\wedge\bom)\llcorner\vol_t(z)}{r^{1+\s}}drd\H^{w}(\ba,\bom)dz,
\eeqn
where
\beqn\label{zetader}
\zeta(t,z,\ba, \bom,r)\coloneqq \begin{cases}
-1&\mbox{if }\H^0(D(z+r\ba, \bom,r)\cap \M_t)\mbox{ is odd and } \bom\cdot \vol_t(z)>0,\\
1&\mbox{if }\H^0(D(z+r\ba, \bom,r)\cap \M_t)\mbox{ is even and } \bom\cdot \vol_t(z)>0,\\
0&\mbox{otherwise}.
\end{cases}
\eeqn
\noindent\textit{Step Four}: We now take the limit of $\big(\text{Meas}_{\s,\eta}(\M_t,\Om)\big)'$ as $\eta$ goes to zero. Define 
\begin{equation}
h_\eta(t)\coloneqq \int_{\M_t}\int_\W\int_\eta^\infty \frac{2^{1-k/2}\zeta(t,z,\ba,\bom,r)\bw(z)\cdot(\ba\wedge \bom)\llcorner\vol_t(z)}{r^{1+\s}}\, dr d\H^{w}(\ba,\bom)dz.
\end{equation}
The goal is to show that $h_\eta(t)=(\text{Meas}_{\s,\eta}(\M_t,\Om))'$, viewed as a function of $t$, converges uniformly in $t$ as $\eta\downarrow 0$ to
\beqn\label{hdef}
h(t)\coloneqq \lim_{\eta\rightarrow 0}h_\eta(t).
\eeqn
To prove that this limit exists, we start by showing that $(h_\eta(t) : \eta>0)$ is Cauchy. Notice that for $\eta>\eta'>0$, 
\begin{equation}\label{hCauchy}
|h_\eta(t)-h_{\eta'}(t)|=\left|\int_{\M_t}\int_\W\int_{\eta'}^\eta \frac{2^{1-k/2}\zeta(t,z,\ba,\bom,r)\bw(z)\cdot(\ba\wedge \bom)\llcorner\vol_t(z)}{r^{1+\s}}\, dr d\H^{w}(\ba,\bom)dz\right|.
\eeqn
For $z\in\M_t$, consider the linear transformation $\bQ_{t,z}\coloneqq \bP_{\vol_t(z)}-\bP_{\vol_t(z)}^\perp$ on $\Real^n$. One can readily check that $\bQ_{t,z}$ is symmetric and orthogonal. From the orthogonality of $\bQ_{t,z}$, it follows that 
\beqn
(\ba,\bom)\mapsto (\bQ_{t,z}\ba,\bQ_{t,z}\bom)
\eeqn
is a bijection on $\W$. Employing the notation in \eqref{cDcReta}, one can use \eqref{cMcR} to see that for sufficiently small $\eta$ it is true that 
\[
(\ba,\bom, r)\notin\D_{\cR(z)}(\eta)\Longleftrightarrow(\bQ_{t,z}\ba,\bQ_{t,z}\bom, r)\notin\D_{\cR(z)}(\eta), \qquad z\in \M_t\cap \spt(\bw).
\]
So for $(\ba,\bom, r)\notin\D_{\cR(z)}(\eta)$, it follows that 
\beqn\label{zetaeven}
\zeta(t,z,\ba,\bom, r) =\zeta(t,z,\bQ_{t,z}\ba,\bQ_{t,z}\bom,r) =1.
\eeqn
Moreover, it is true that
\beqn\label{restodd}
\bw(z)\cdot(\bQ_{t,z}\ba\wedge\bQ_{t,z}\bom)\llcorner\vol_t(z)=-\bw(z)\cdot(\ba\wedge \bom)\llcorner\vol_t(z).
\eeqn
To see that this is the case, use the definition \eqref{Lkvector}, \eqref{movingdotR}, and the symmetry of $\bQ_{t,z}$ to find that
\begin{align}
\bw(z)\cdot(\bQ_{t,z}\ba\wedge\bQ_{t,z}\bom)\llcorner\vol_t(z)&=(\bw(z)\wedge\vol_t(z))\cdot\bQ_{t,z}(\ba\wedge\bom)\\
&=\bQ_{t,z}(\bw(z)\wedge\vol_t(z))\cdot(\ba\wedge\bom)\\
&=(\bQ_{t,z}\bw(z)\wedge\bQ_{t,z}\vol_t(z))\cdot(\ba\wedge\bom)\\
&=-(\bw(z)\wedge\vol_t(z))\cdot(\ba\wedge\bom)\\
&=-\bw(z)\cdot(\ba\wedge\bom)\llcorner\vol_t(z).
\end{align}
Putting together \eqref{zetaeven} and \eqref{restodd} shows that outside of $\D_{\cR(z)}(\eta)$ the integrand of the integral on the right-hand side of \eqref{hCauchy} is odd. Thus, using Lemma \ref{0bowlem}, we conclude that
\begin{align*}
|h_\eta(t)-h_{\eta'}(t)|&=\left|\int_{\M_t}\int_{\D_{\cR(z)}(\eta)\setminus \D_{\cR(z)}(\eta')} \frac{2^{1-k/2}\zeta(t,z,\ba,\bom,r)\bw(z)\cdot(\ba\wedge\bom)\llcorner\vol_t(z)}{r^{1+\s}}\, d\H^{w+1}(\ba,\bom,r)dz\right|\\
&\le 2\int_{\M_t}\int_{\D_{\cR(z)}(\eta)}\frac{|\bw(z)|}{r^{1+\s}}\, d\H^{w+1}(\ba,\bom,r)dz\\
&\le 2\int_{\M_t} C\eta^{\a-\s}\big(-\ln[ K(2\eta)^\a]\big)^{k-1}|\bw(z)|\, dz,\\
&\le 2C\H^k(\cM_t)\eta^{\a-\s}\big(-\ln[ K(2\eta)^\a]\big)^{k-1}||\bw||_{L^\infty(\R^n)}.
\end{align*}
As $\bw$ is smooth with compact support and $\H^k(\M)$ is finite, $\cH^k(\cM_t)$ is uniformly bounded for $t\in \cI$. Thus, this proves that $(h_{\eta}(t): \eta>0)$ is uniformly Cauchy in $t$. It follows that the limit defining $h(t)$ in \eqref{hdef} exists and the limit is uniform in $t$. Therefore, we can conclude that 
\[
(\text{Meas}_{\s}(\M_t,\Om))'=\lim_{\eta\rightarrow 0}\Big((\text{Meas}_{\s,\eta}(\M_t,\Om)\Big)'=h(t).
\] 
It follows that for the variation of $\text{Meas}_\s(\M,\Om)$ in the direction $\bw$ to vanish means that 
\[
\lim_{\eta\downarrow 0}\int_{\M}\int_\W\int_\eta^\infty \frac{2^{1-k/2}\zeta(0,z,\ba,\bom,r)\bw(z)\cdot(\ba\wedge \bom)\llcorner\vol(z)}{r^{1+\s}}\, dr d\H^{w}(\ba,\bom)dz=0.
\]
As $\bw$ is arbitrary in a neighborhood of $z_\circ$, a standard argument implies that 
\[
\lim_{\e\downarrow 0}\int_\W\int_\e^\infty \frac{2^{1-k/2}\zeta(0,z_\circ,\ba,\bom,r)\bw(z)\cdot(\ba\wedge \bom)\llcorner\vol(z)}{r^{1+\s}}\, dr d\H^{w}(\ba,\bom)=0.
\]
From the definition of $\zeta$ in \eqref{zetader} and because the $z_\circ\in\M$ was arbitrary, we find that \eqref{variation} holds for all $z\in \M$.
\end{proof}

Motivated by the connection between the $k$-dimensional measure and $\text{Meas}_\sigma^k$, see \eqref{limksmeas}, and the fact that manifolds that minimize their $k$-dimensional measure relative to all smooth manifolds with the same boundary have a zero mean-curvature vector at each point, the previous theorem motivates defining a nonlocal mean-curvature vector $\bh_\s$ at $z\in\M$ by 
\beqn\label{sigcurvature}
\bh_\s(z)\coloneqq \lim_{\e\downarrow 0}\left(\int_{\A_{\rm e}^+(z,\e)}-\int_{\A_{\rm o}^+(z,\e)}\right)\frac{(\aa\wedge\bom)\llcorner\vol(z)}{r^{1+\s}}\,d\H^{w+1}(\aa,\bom,r).
\eeqn
Notice that the expression defining $\bh_\sigma(z)$ is the opposite of the expression appearing on the left-hand side of \eqref{variation}. The reason for this is that the first variation of the $k$-dimensional measure involves the opposite of the mean-curvature vector, so it seems reasonable that the first variation of the fractional measure should involve the opposite of a nonlocal mean-curvature vector. One can readily check that this vector is orthogonal to the manifold $\cM$ at $z$ by using \eqref{movingdotR}. However, in general this vector will not be parallel to the classical mean-curvature vector.

We now show that the mean-curvature vector defined in \eqref{sigcurvature} is consistent with the nonlocal mean-curvature for surfaces defined by Paroni, Podio-Guidugli, and Seguin \cite{PPGS99} when $n=k+1$. The mean curvature is obtained from the mean-curvature vector by dotting it with a unit normal. Since $n=k+1$ and we have a volume form $\vol$ for $\cM$, the Hodge star, see \eqref{Hstar}, can be used to obtain the associated unit normal. Namely, choose the unit normal $\bn=*\vol$. Moreover, for every $\bom\in\hat\Lambda^k_u(\Real^n)$ there is an associated $\bu=*\bom\in\cU(\Real^n)$. Using this notation, the fact that the Hodge star is an isometry, and the multivector identities \eqref{movingdotL}, \eqref{movingdotR}, \eqref{IPstar}, \eqref{anticom}, and \eqref{kvip} yield
\begin{align*}
\bn\cdot(\aa\wedge \bom)\llcorner \vol
&=\ba\lrcorner(\bn\wedge\vol)\cdot \bom\\
&=[(\aa\cdot \bn)\vol-\bn\wedge(\ba\lrcorner\vol)]\cdot \bom\\
&=(\aa\cdot \bn)(*\vol\cdot*\bom)-(\ba\lrcorner\vol)\cdot(\bn\lrcorner \bom)\\
&=(\ba\cdot\bn)(\bn\cdot\bu)-*(\ba\lrcorner\vol)\cdot*(\bn\lrcorner \bom)\\
&=(\ba\cdot\bn)(\bn\cdot \bu)-*\vol \wedge\ba\cdot*\bom\wedge\bn\\
&=(\ba\cdot\bn)(\bn\cdot \bu)-\bn \wedge\ba\cdot\bu\wedge\bn\\
&=\ba\cdot\bu.
\end{align*}
Also, as $\bzero=\ba\lrcorner\bom$, it follows that $\bzero=*(\ba\lrcorner \bom)=(*\bom)\wedge \ba=\bu\wedge\ba$, which implies that $\ba$ and $\bu$ are parallel. As $\vol\cdot\bom>0$ and the Hodge star is an isometry, we must have $\bn\cdot \bu>0$. It follows that $\bu=\text{sgn}(\ba\cdot\bn)\ba$, with sgn being the sign function, and, so,
\beqn
\bn\cdot(\ba\wedge\bom)\llcorner\vol=\text{sgn}(\ba\cdot \bn).
\eeqn

Thus, taking the inner-product of \eqref{sigcurvature} with $\bn(z)$ yields
\begin{align}\label{meancurv}
\bn(z)\cdot \bh_\s(z) &=\lim_{\e\downarrow 0}\left(\int_{\A_{\rm e}^+(z,\e)}-\int_{\A_{\rm o}^+(z,\e)}\right)\frac{  \text{sgn}(\ba\cdot \bn(z))  }{r^{1+\s}}\,d\H^{w+1}(\aa,\bom,r).
\end{align}
Since $\A_e^+(z,\e)$ and $\A_o^+(z,\e)$ both require $\bom\cdot \vol(z)>0$, we define $\check{\chi}_{\M}:\M\times\W\times \R^+\rightarrow \R$ by 
\beqn
\check{\chi}_\cM(z,\ba,\bom,r)\coloneqq 
\begin{cases}
1&\big( (\ba,\bom,r)\in \A_{\rm e}^+(z,\e) \text{ and }\ba\cdot \bn(z)<0\big)\\
& \quad \text{or } \big( (\ba,\bom,r)\in \A_{\rm o}^+(z,\e) \text{ and }\ba\cdot \bn(z)>0\big),\\
-1&\big( (\ba,\bom,r)\in \A_{\rm e}^+(z,\e) \text{ and }\ba\cdot \bn(z)>0\big)\\
& \quad \text{or } \big( (\ba,\bom,r)\in \A_{\rm o}^+(z,\e) \text{ and }\ba\cdot \bn(z)<0\big),\\
0& \text{otherwise. }
\end{cases}
\eeqn
It follows that \eqref{meancurv} can be written as 
\[
\bn(z)\cdot  \bh_\s(z)=\lim_{\e\downarrow 0}\int_{(\W\times\R^+)^+(z)}\frac{\check{\chi}_\M(z,\ba,\bom,r)}
{r^{1+\s}}\,d\H^{w+1}(\aa,\bom,r),
\]
where 
\[
(\W\times\R^+)^+(z)\coloneqq \{(\aa,\bom,r)\in\W\times\R^+:\bom\cdot\vol(z)>0\}.
\]
Using the change of variables $(\ba,\bom,r)\mapsto z+2r\ba$ with the area formula shows that
\beqn
\bn(z)\cdot  \bh_\s(z)=2^{1+\sigma}\omega_{n-2} H_\sigma(z),
\eeqn
which means that $\bn(z)\cdot\bh_\sigma(z)$ agrees with $H_\s(z)$ (see \eqref{Hsigma}) up to a multiplicative constant. The reason that these two quantities are not identical is because $H_\s(z)$ is normalized to ensure that it converges in the appropriate limit to the classical mean-curvature, while $\bh_\s(z)$ has not. We leave the calculation of this normalizing constant to future work.

\appendix
\section{Appendix}\label{secA}
\setcounter{section}{1} \setcounter{equation}{0} 

\subsection{A change of variables}\label{secA1}

Here we derive a useful change of variables formula involving the function
\beqn
\Psi:\overline{\cM}\times\cW\times\Real^+\rightarrow \cD
\eeqn
defined by
\beqn\label{COVPsi}
\Psi(z,\ba,\bomega,r)\coloneqq (z+r\ba,\bomega,r),\qquad (z,\ba,\bomega,r)\in\overline{\cM}\times\cW\times\Real^+.
\eeqn
See the beginning of Section~\ref{sec:3} for the introduction of the notation used here.

\begin{lemma}\label{COVPsilem}
If $\cA$ is a measurable subset of $\overline{\cM}\times\cW\times\Real^+$ and $f:\Psi(\cA)\rightarrow\Real$ is an integrable function, then
\begin{multline}\label{COVPsieqn}
\int_{\Psi(\cA)}\Big[ \sum_{(z,\ba,\bomega,r)\in\Psi^{-1}(p,\bomega,r)} f(p,\bomega,r)\Big] d\cH^{d-1}(p,\bomega,r)\\
=\int_\cA f(z+r\ba,\bomega,r) J_\Psi(z,\ba,\bomega,r) d\cH^{d-1}(z,\ba,\bomega,r),
\end{multline}
where $d=\text{\rm dim}\,\cD=n+k(n-k)+1$ and
\beqn\label{PsiJacobian}
J_\Psi(z,\ba,\bomega,r)\coloneqq\frac{r^{n-k-1}}{2^{k/2}}\sqrt{(1+r^2)|\bP_{\bomega}(\ba\lrcorner \text{\bf vol}(z))|^2+2(\text{\bf vol}(z)\cdot\bomega)^2}
\eeqn
is the Jacobian of $\Psi$.
\end{lemma}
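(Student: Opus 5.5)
The formula \eqref{COVPsieqn} is the area formula applied to the locally Lipschitz map $\Psi$ between the $(d-1)$-dimensional manifold $\overline\cM\times\cW\times\Real^+$ and $\cD$. So the only real content is the computation of the Jacobian $J_\Psi=\sqrt{\det(\nabla\Psi^\trans\nabla\Psi)}$ at a fixed point $(z,\ba,\bom,r)$, and that is how I would organize the proof.

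\emph{Choice of orthonormal basis.} Choose an orthonormal basis of the tangent space $T_z\overline\cM\times T_{(\ba,\bom)}\cW\times\Real$ adapted to the geometry, in the spirit of the tangent vectors listed in Lemma~\ref{lempDemb}. Concretely, take an orthonormal basis $\bt_1,\dots,\bt_k$ of $T_z\cM$ and an orthonormal basis $\bw_1,\dots,\bw_k$ of $[\bom]$. The tangent space of $\cW$ at $(\ba,\bom)$ then splits into three kinds of directions:
\begin{itemize}
\item directions $(\bc,\bzero)$ with $\bc\in[\bom]^\perp\cap\ba^\perp$, giving $n-k-1$ dimensions;
\item directions $(\bzero,\bbeta)$ with $\bbeta\in T_{\bom}\hat\Lambda^k_u$ and $\ba\lrcorner\bbeta=\bzero$, giving $k(n-k-1)$ dimensions;
\item the coupled rotations $(\bw_i,-\ba\wedge(\bw_i\lrcorner\bom))$, $i=1,\dots,k$, each of norm $\sqrt2$ since both components are unit.
\end{itemize}
Normalizing the coupled rotations by $1/\sqrt2$ is the source of the factor $2^{-k/2}$ in \eqref{PsiJacobian}.

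\emph{Block structure of the differential.} Next, compute $\nabla\Psi$ on each basis vector, as in the proof of Lemma~\ref{lempDemb}:
\begin{itemize}
\item $(\bt,\bzero,\bzero,0)\mapsto(\bt,\bzero,0)$;
\item $(\bzero,\bc,\bzero,0)\mapsto(r\bc,\bzero,0)$;
\item $(\bzero,\bzero,\bbeta,0)\mapsto(\bzero,\bbeta,0)$;
\item $\tfrac1{\sqrt2}(\bzero,\bw_i,-\ba\wedge(\bw_i\lrcorner\bom),0)\mapsto\tfrac1{\sqrt2}(r\bw_i,-\ba\wedge(\bw_i\lrcorner\bom),0)$;
\item $(\bzero,\bzero,\bzero,1)\mapsto(\ba,\bzero,1)$.
\end{itemize}
The images of the $\bc$-directions and the $\bbeta$-directions are mutually orthogonal. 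They are also orthogonal to all the remaining image vectors, because the $\bt$-images and $\ba$-components can be projected off them. Those two blocks therefore contribute $r^{n-k-1}\cdot1$, which is the prefactor in \eqref{PsiJacobian}.

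\emph{Reduction to a small Gram determinant.} What remains is the Gram determinant of $2k+1$ vectors in the $(2k+1)$-dimensional span of
\[
\{(\bP\bt_j,\bzero,0)\},\quad\{(\bw_i,\bzero,0)\},\quad\{(\bzero,\ba\wedge(\bw_i\lrcorner\bom),0)\},\quad(\ba,\bzero,0),\quad(\bzero,\bzero,1),
\]
where $\bP$ denotes the projection onto $[\bom]\oplus\Real\ba$, after the orthogonal part in $[\bom]^\perp\cap\ba^\perp$ has been absorbed into the $\bc$-block. The $\bt_j$ decompose as $\bt_j=\bP_{\bom}\bt_j+(\ba\cdot\bt_j)\ba+(\text{part in }[\bom]^\perp\cap\ba^\perp)$, and that last part is killed by the $\bc$-block. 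The problem then becomes a determinant of a $(2k+1)\times(2k+1)$ matrix whose entries are $\bw_i\cdot\bt_j$, $\ba\cdot\bt_j$, $r/\sqrt2$, $1/\sqrt2$, and $1$.

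I would evaluate this determinant with a Cauchy--Binet / Schur complement argument. The matrix $(\bw_i\cdot\bt_j)$ has determinant $\bom\cdot\vol(z)$. The vector $(\ba\cdot\bt_j)_j$, paired against cofactors of that matrix, produces the components of $\bP_{\bom}(\ba\lrcorner\vol(z))$, by \eqref{projip} and Proposition~\ref{magproj}. The $r$ and the $1$ from the $\ba$-column combine into the weights $(1+r^2)$ and $2$. I expect the identity to come out as
\[
\det(\nabla\Psi^\trans\nabla\Psi)=2^{-k}\bigl[(1+r^2)|\bP_{\bom}(\ba\lrcorner\vol(z))|^2+2(\vol(z)\cdot\bom)^2\bigr]r^{2(n-k-1)}.
\]
This bookkeeping of cross terms is the main obstacle. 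To control it, I would first reduce to principal-angle bases as in \eqref{anglebasis}, so that $\bt_j\cdot\bw_i$ is diagonal, and verify the identity in that basis. Proposition~\ref{wedgenorm} is the consistency check for the combination $(\bom\cdot\vol)^2+|\bP_{\bom}(\ba\lrcorner\vol)|^2$.

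With $J_\Psi$ in hand, \eqref{COVPsieqn} follows directly from the general area formula (Federer 3.2.20/3.2.46) for Lipschitz maps between rectifiable sets of equal dimension.
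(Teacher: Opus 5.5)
Your adapted orthonormal frame of $T_{(\ba,\bom)}\cW$ is correct: the $\bc$-directions, the $\bbeta$-directions, and the coupled rotations scaled by $1/\sqrt2$. So are their images under $\nabla\Psi$. The factor $r^{n-k-1}$ from the $\bc$-block is also right, once the $\bt$-images are projected off it, and so is attributing $2^{-k/2}$ to the normalization. This is a self-contained way to get the reduction that the paper imports from \cite{MS25}[Lemma~5]. The proposal breaks at the next step. The $2k+1$ remaining images are $(\bP\bt_j,\bzero,0)$, $V_i\coloneqq\frac{1}{\sqrt2}(r\bw_i,-\ba\wedge(\bw_i\lrcorner\bom),0)$ and $R\coloneqq(\ba,\bzero,1)$. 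They lie in the span of the orthonormal vectors $(\bw_i,\bzero,0)$, $(\bzero,\ba\wedge(\bw_i\lrcorner\bom),0)$, $(\ba,\bzero,0)$, $(\bzero,\bzero,1)$. That span is $(2k+2)$-dimensional, not $(2k+1)$-dimensional; for $n=2$, $k=1$ you have three vectors spanning a hyperplane of a four-dimensional space. So no square matrix with the entries you list has determinant $J_\Psi$. Moreover, the evaluation itself, which is the whole content of the lemma, is only announced as what you ``expect''. Proposition~\ref{wedgenorm} cannot serve as a check either, since it carries the weights $1$ and $1$ rather than $1+r^2$ and $2$.

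To close the gap, project rather than take a square determinant. The $V_i$ are mutually orthogonal with $|V_i|^2=(1+r^2)/2$, and $R$ is orthogonal to all of them with $|R|^2=2$. The orthogonal complement of their span is spanned by $(1+r^2)^{-1/2}(\bw_i,r\,\ba\wedge(\bw_i\lrcorner\bom),0)$ and $2^{-1/2}(\ba,\bzero,-1)$. The projected $\bt$-images have Gram matrix $\frac{1}{2(1+r^2)}[\bL]$, where $\bL=\bP_\phi\bigl((1+r^2)\ba\otimes\ba+2\bP_{\bom}\bigr)\bP_\phi^\trans$ and $\bP_\phi$ is the projection onto $T_z\overline\cM$. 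Hence
\[
\det(\nabla\Psi^\trans\nabla\Psi)=2^{1-2k}\,r^{2(n-k-1)}\det\bL ,
\]
which is the paper's \eqref{JFcalc} written in orthonormal frames. From here you still need the paper's final step. Choosing $\bt_2,\dots,\bt_k\perp\ba$ confines the rank-one term to the $(1,1)$ entry, and expanding along the first row gives $\det\bL=2^{k-1}(1+r^2)|\bP_{\bom}(\ba\lrcorner\vol(z))|^2+2^k(\vol(z)\cdot\bom)^2$. Your principal-angle basis also works. With $\bt_j\cdot\bw_i=\delta_{ij}\cos\theta_i$ and $s_j\coloneqq\ba\cdot\bt_j$, the matrix $[\bL]$ is diagonal plus rank one, so (also when some $\cos\theta_j=0$)
\[
\det\bL=2^k\prod_{j=1}^k\cos^2\theta_j+2^{k-1}(1+r^2)\sum_{j=1}^k s_j^2\prod_{l\neq j}\cos^2\theta_l .
\]
The first term is $2^k(\vol(z)\cdot\bom)^2$. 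The sum equals $|\bP_{\bom}(\ba\lrcorner\vol(z))|^2$, because $\bP_{\bom}(\ba\lrcorner\vol(z))=\pm\sum_{j}(-1)^{j+1}s_j\bigl(\prod_{l\neq j}\cos\theta_l\bigr)\,\bw_1\wedge\cdots\wedge\widehat{\bw_j}\wedge\cdots\wedge\bw_k$ and these $(k-1)$-vectors are orthonormal.
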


\begin{proof}
By using a partition of unity, we need only consider the case for sets of the form $\cA=\cM_\cA\times\cW_\cA\times\cI_\cA$, where $\cM_\cA\subseteq\overline{\cM}$, $\cW_\cA\subseteq\cW$, and $\cI_\cA\subseteq\Real^+$, for which $\cM_\cA$ and $\cW_\cA$ are each covered by a single chart---meaning there is a set $M_\cA$, a subset of a halfspace of $\Real^k$, a set $W_\cA\subseteq\Real^w$, where $w= \text{dim}\,\cW=n-1+k(n-k-1)$, and diffeomorphisms $\phi:M_\cA\rightarrow\cM_\cA\subseteq\Real^n$ and $\bchi:W_\cA\rightarrow\cW_\cA\subseteq \Real^{n+{n\choose k}}$. Since $\cW_\cA\subseteq\cW$, we can represent $\bchi$ in terms of its component functions: $\bchi=(\bchi_1,\bchi_2)$, where $\bchi_1:W_\cA\rightarrow\cU(\Real^n)$ and $\bchi_2:W_\cA\rightarrow\hat\Lambda_u^k(\Real^n)\subseteq\Real^{n\choose k}$. 

Recall that if $g$ is an integrable function defined on $\cW_\cA$, then
\beqn\label{Jchi}
\int_{\cW_\cA}g(\ba,\bomega)d\cH^w(\ba,\bomega)=\int_{W_\cA}g(\bchi_1(u),\bchi_2(u))J_{\bchi} (u)du,
\eeqn
where $J_{\bchi}=\sqrt{\det(\nabla\bchi^T\nabla\bchi)}$ is the Jacobian of $\bchi$. Moreover, if $h$ is an integrable function defined on $\cM_\cA$, then
\beqn\label{Jphi}
\int_{\cM_\cA}h(z)d\cH^k(z)=\int_{M_\cA}h(\phi(x))J_\phi(x)dx.
\eeqn
Now set $A\coloneqq M_\cA\times W_\cA\times \cI_\cA$, and define $F:A\rightarrow\Psi(\cA)$ by
\beqn
F(x,u,r)\coloneqq (\phi(x)+r\bchi_1(u),\bchi_2(u),r),\qquad (x,u,r)\in A.
\eeqn
To establish \eqref{COVPsieqn} by the area formula, from \eqref{Jchi} and \eqref{Jphi} it suffices to show that
\beqn\label{JFneeded}
J_F(x,u,r)=\frac{r^{n-k-1}}{2^{k/2}}\sqrt{(1+r^2)|\bP_{\bchi_2(u)}(\bchi_1(u)\lrcorner \vol(\phi(x)))|^2+2(\vol(\phi(x))\cdot\bchi_2(u))^2} J_\bchi(u) J_\phi(x).
\eeqn
The proof of this is similar to that of Mihaila and Seguin \cite{MS25}[Lemma~5] and, so, most of the proof will be skipped. However, the last step in the calculation is different so we pick up the proof from there. Namely, using the ideas in the result just mentioned, one can arrive at
\beqn\label{JFcalc}
|\det(\nabla F^T \nabla F)|=\frac{r^{2n-2k-2}}{2^{2k-1}}\det[ \bP_\phi \big ((1+r^2)\bchi_1\otimes\bchi_1 + 2\bP_{\bchi_2})\bP_\phi^T] J_\bchi J_\phi.
\eeqn
Here the variables $x$ and $u$ are suppressed for ease of presentation, $\bP_\phi:\Real^n\rightarrow T_{\phi(x)}\overline{\cM}$ is the projection from $\Real^n$ to the tangent space of $\overline{\cM}$ at $\phi(x)$, and $\bP_{\bchi_2}:\Real^n\rightarrow\Real^n$ is the projection of $\Real^n$ onto the subspace associated with the simple $k$-vector $\bchi_2(u)$.

To compute the determinant of $\bL\coloneqq \bP_\phi((1+r^2)\bchi_1\otimes\bchi_1 + 2\bP_{\bchi_2})\bP_\phi^T$, which is a linear mapping from $T_{\phi(x)}\overline{\cM}$ to itself, we introduce an orthonormal basis of this tangent space: $\bt_1,\bt_2,\dots,\bt_k$. Assume this basis is oriented so that $\vol(\phi(x))=\bt_1\wedge\dots\wedge\bt_k$. Moreover, this basis can be chosen so that $\bt_i\cdot\bchi_1=0$ for $i=2,\dots,k$. Relative to this basis, the components of the matrix $[\bL]$ of $\bL$ are given by
\begin{align}
[\bL]_{11}&=(1+r^2)(\bt_1\cdot\ba)^2+2|\bP_{\bchi_2}\bt_1|^2,\\
[\bL]_{ij}&=2\bP_{\bchi_2}\bt_i\cdot \bP_{\bchi_2}\bt_j\qquad 2\leq i,j\leq k.
\end{align}
Letting $m_{ij}([\bL])$ denote the $ij$-minor of $[\bL]$, we can compute the determinant of $[\bL]$ by expansion along the first row:
\begin{align}
\det [\bL] &= \sum_{j=1}^k (-1)^{j+1}[\bL]_{1j}\det(m_{1j}([\bL]))\\
&=[(1+r^2)(\bt_1\cdot\bchi_1)^2+2|\bP_{\bchi_2}\bt_1|^2]\det(m_{11}([\bL]))+\sum_{j=2}^k (-1)^{j+1}[\bL]_{1j}\det(m_{1j}([\bL]))\\
&=(1+r^2)(\bt_1\cdot\bchi_1)^2\det(m_{11}([\bL]))+\sum_{j=1}^k (-1)^{j+1}2\bP_{\bchi_2}\bt_1\cdot \bP_{\bchi_2}\bt_j\det(m_{1j}([\bL])).\label{detL}
\end{align}
We will complete the calculation by considering the two terms in the last equation separately. For the first one, notice that since $\bt_i\cdot\bchi_1=0$ for $i\not=1$, by \eqref{Lkvector} and \eqref{kvip},
\begin{align}
(\bt_1\cdot\bchi_1)^2\det(m_{11}([\bL]))&=2^{k-1}(\bt_1\cdot\bchi_1)^2(\bP_{\bchi_2}\bt_2\wedge\dots\wedge\bP_{\bchi_2}\bt_k)\cdot(\bP_{\bchi_2}\bt_2\wedge\dots\wedge\bP_{\bchi_2}\bt_k)\\
&=2^{k-1}|(\bt_1\cdot\bchi_1)\bP_{\bchi_2}(\bt_2\wedge\dots\wedge\bt_k)|^2\\
&=2^{k-1}|\bP_{\bchi_2}(\bchi_1\lrcorner \vol)|^2.
\end{align}
Next, notice that the second term on the right-hand side of \eqref{detL} is the determinant expansion along the first row of $2\bP_\phi\bP_{\bchi_2}\bP_\phi^T$. Thus, from the computation in Mihaila and Seguin \cite{MS25}\footnote{See the equation before (6.28) in \cite{MS25}.} we have
\beqn
\sum_{j=1}^k (-1)^{j+1}2(\bP_{\bchi_2}\bt_1\cdot \bP_{\bchi_2}\bt_j)\det(m_{1j}([\bL]))=\det(2\bP_\phi\bP_{\chi_2}\bP_\phi^T)=2^k(\vol\cdot\bchi_2)^2.
\eeqn
Combining these last two calculations with \eqref{JFcalc} and \eqref{detL} yields \eqref{JFneeded}, as desired.
\end{proof}

\subsection{A transport theorem}\label{secA2}

To compute the first variation of the fractional $k$-dimensional measure, we will make use of the below transport theorem, whose proof was given by Seguin \cite{S20c}.

\begin{theorem}\label{thmATTmod}
Let $\cP$ be a $d$-dimensional submanifold of $\Real^N$. For each $t\in\cI$, $\cI$ being an open interval of $\Real$, let $\cG_t$ be an open subset of $\cP$ that is locally of finite perimeter with exterior unit normal $\bnu(t,\cdot)$ such that there exists a $(d-1)$-dimensional Riemannian manifold $\cN$ and a function $\Theta\in C^1(\cI\times\cN,\Real^N)$ such that for all $t\in\cI$ the following conditions hold:
\begin{enumerate}[label=(\textit{D\arabic*})]
\item \label{D1} the gradient of $\Theta_t\coloneqq \Theta(t,\cdot):\cN\rightarrow\Real^N$ is injective $\cH^{d-1}$-a.e.,
\item \label{D2} $\partial^*\cG_t$ and $\Theta_t(\cN)$ differ by a set of $\cH^{d-1}$-measure zero, and
\item \label{D3} $\cH^{d-1}(\{ x\in\partial^*\cG_t\ |\ \cH^0(\Theta_t^{-1}(\{x\}))>1\})=0$.
\end{enumerate}
The `velocity' $\bv$ associated with $\Theta$ is defined by
\beqn\label{veldef}
\bv(t,x)\coloneqq \Theta'(t,\Theta_t^{-1}(x))\qquad \text{for $\cH^{d-1}$-a.e.}\ x\in \partial^*\cG_t,\, t\in\cI,
\eeqn
where the prime denotes the partial derivative with respect to $t$. Set
\beqn\label{defcF}
\cF\coloneqq \{(t,x)\in\cI\times\cP\ |\ x\in \cG_t\}\subseteq \cI\times\cP,
\eeqn
and consider $f:\cI\times\cP\rightarrow\Real$ such that
\begin{enumerate}[label=(\textit{F\arabic*})]
\item \label{F1} $f\in L^\infty(\cF)$ and $f(t,\cdot)\in L^1(\cG_t)$ for all $t\in\cI$,
\item \label{F2} $\cH^d$-a.e.~$(x,t)\in\cI\times\cP$ is a Lebesgue point of $f$,
\item \label{F3} for any bounded interval $\cJ\subseteq\cI$, $\int_\cJ\int_{\partial^*\cG_t}|f(t,\cdot)\bv(t,\cdot)\cdot\bnu(t,\cdot)|\, d\cH^{d-1}dt<\infty$,
\item \label{F4} the function $t\mapsto \int_{\partial^*\cG_t}f(t,\cdot)\bv(t,\cdot)\cdot\bnu(t,\cdot)\, d\cH^{d-1}$ is continuous on $\cI$, and
\item \label{F5} $f'=0$.
\end{enumerate}
It follows that for $t\in\cI$,
\beqn\label{eqATT}
\Big(\int_{\cG_t} f(t,\cdot)\, d\cH^d \Big)'= \int_{\partial^*\cG_t} f(t,\cdot) \bv(t,\cdot)\cdot\bnu(t,\cdot)\, d\cH^{d-1}.
\eeqn
\end{theorem}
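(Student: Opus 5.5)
The plan is to prove \eqref{eqATT} in the weak sense in $t$ and then upgrade it to a pointwise statement using the continuity hypothesis \ref{F4}. Set $F(t)\coloneqq\int_{\cG_t}f(t,\cdot)\,d\cH^d$ and $g(t)\coloneqq\int_{\partial^*\cG_t}f\,\bv\cdot\bnu\,d\cH^{d-1}$. By \ref{F5}, $f$ does not depend on $t$, so I will write $f=f(x)$. It suffices to show that for every $\psi\in C^\infty_c(\cI)$,
\beqn
-\int_\cI\psi'(t)F(t)\,dt=\int_\cI\psi(t)g(t)\,dt .
\eeqn
Then $F$ has weak derivative $g$, which lies in $L^1_{\rm loc}$ by \ref{F3}. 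Hence $F$ is locally absolutely continuous with $F(t)-F(s)=\int_s^tg$, and since $g$ is continuous by \ref{F4}, $F$ is $C^1$ with $F'=g$.

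To get the weak identity I would work in space-time $\cI\times\cP$ with the set $\cF$ from \eqref{defcF}. First I show that $\cF$ is locally of finite perimeter. Consider the map $\tilde\Theta(t,y)\coloneqq(t,\Theta(t,y))$ on $\cI\times\cN$. By \ref{D1}--\ref{D3}, $\tilde\Theta$ is $C^1$ and a.e.\ injective onto $\{(t,x):x\in\partial^*\cG_t\}$ up to $\cH^d$-null sets. I identify this image with $\partial^*\cF$ by a slicing argument (for a.e.\ $t$, the slice of $\partial^*\cF$ is $\partial^*\cG_t$). Next I compute the exterior normal of $\cF$ at $\tilde\Theta(t,y)$. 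It must annihilate the tangent vectors $(1,\Theta'(t,y))$ and $(0,\nabla\Theta_t\,\cdot)$, so it is proportional to $(-\bv\cdot\bnu,\bnu)$, namely
\beqn
\bN=\frac{(-\bv\cdot\bnu,\ \bnu)}{\sqrt{1+(\bv\cdot\bnu)^2}}.
\eeqn
The coarea formula for the projection $(t,x)\mapsto t$ restricted to $\partial^*\cF$ has Jacobian $1/\sqrt{1+(\bv\cdot\bnu)^2}$. Therefore, for integrable $\varphi$,
\beqn
\int_{\partial^*\cF}\varphi\,N_t\,d\cH^d=-\int_\cI\int_{\partial^*\cG_t}\varphi\,\bv\cdot\bnu\,d\cH^{d-1}dt .
\eeqn

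I would then mollify $f$ in space (in charts of $\cP$) to get $f_\epsilon$ and apply the Gauss--Green theorem on $\cF$ to the vector field $(\psi(t)f_\epsilon(x),\bzero)$. This gives
\beqn
\int_\cF\psi' f_\epsilon\,d\cH^{d+1}=-\int_\cI\psi(t)\int_{\partial^*\cG_t}f_\epsilon\,\bv\cdot\bnu\,d\cH^{d-1}dt .
\eeqn
On the left-hand side, $f_\epsilon\to f$ in $L^1_{\rm loc}$ and is bounded by \ref{F1}, so Fubini gives convergence to $\int\psi'F\,dt$.

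The main obstacle is passing to the limit on the right-hand side, because $f_\epsilon\to f$ only $\cH^d$-a.e., whereas the integral lives on the $(d-1)$-dimensional sets $\partial^*\cG_t$. The first thing I would try is to use the space-time structure together with \ref{F2}. Since $f$ is independent of $t$, a Lebesgue point of $f$ in $\cI\times\cP$ is the same as a spatial Lebesgue point. The sets $\partial^*\cG_t$ sweep out the $d$-dimensional set $\partial^*\cF$, which is parametrized by $\tilde\Theta$, so I want $f_\epsilon\to f$ for $\cH^d$-a.e.\ point of $\partial^*\cF$. Granting this, the coarea identity above converts it into convergence for $\cH^{d-1}$-a.e.\ point of $\partial^*\cG_t$, for a.e.\ $t$. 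Dominated convergence with the majorant $\|f\|_\infty|\bv\cdot\bnu|$, which is integrable by \ref{F3}, then finishes the argument. If a.e.\ convergence on $\partial^*\cF$ cannot be extracted from \ref{F2} directly, my fallback is to reparametrize by $\tilde\Theta$ via the area formula. The transverse motion of $\Theta_t$ (where $\bv\cdot\bnu\neq0$) should make the relevant pushforward measure absolutely continuous with respect to $\cH^d$ on $\cP$. The tangential part (where $\bv\cdot\bnu=0$) contributes nothing to the integral in any case.
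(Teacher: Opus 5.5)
The paper gives no proof of this theorem; it defers to Seguin. Your space-time Gauss--Green strategy is the natural route, and the step you flag as the main obstacle is actually immediate. (F2) is stated with $\cH^d$ on the $(d+1)$-dimensional space $\cI\times\cP$, so the non-Lebesgue points of $f$ meet $\partial^*\cF$ in an $\cH^d$-null set. Spatial mollifications converge at the remaining points, and the coarea inequality transfers this to a.e.\ time slice.

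The genuine gap is elsewhere. Nothing in your argument, and nothing in (D1)--(D3), controls how the sets $\cG_t$ themselves depend on $t$; those hypotheses only constrain each $\partial^*\cG_t$ separately. Two of your steps need exactly that control. First, the weak identity only shows that $F$ agrees \emph{almost everywhere} with the locally absolutely continuous function $t\mapsto F(s)+\int_s^t g$. To get $F'(t)=g(t)$ at \emph{every} $t$ you need $F$ itself to be continuous, which you do not prove and which does not follow from the hypotheses. Take $\cP=B_2\subseteq\Real^d$ with $d\geq 2$, $\cG_t=B_1$ for $t\neq 0$, $\cG_0=B_2\setminus\overline{B_1}$, $\cN$ the unit sphere, $\Theta(t,y)=y$, and $f\equiv 1$. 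All of (D1)--(D3) and (F1)--(F5) hold, $\bv=\bzero$, and your weak identity holds because $\cF$ coincides with $\cI\times B_1$ up to a null set. Yet $F$ jumps at $t=0$. So, as printed, the hypotheses cannot close this step.

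Second, identifying $\tilde\Theta(\cI\times\cN)$ with $\partial^*\cF$ by slicing fails, for two reasons. Slicing theorems presuppose that $\cF$ is already locally of finite perimeter, which is what you want to deduce from the identification, so the argument is circular. More seriously, the coarea formula for $(t,x)\mapsto t$ on $\partial^*\cF$ has Jacobian $|\mathbf{N}_x|$, the size of the spatial part of the normal. Slices are therefore blind to the part of $\partial^*\cF$ where $\mathbf{N}=(\pm 1,\bzero)$.

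Such horizontal pieces occur whenever $\cG_t$ jumps in time without its reduced boundary moving. With $\cG_t=B_1$ for $t<0$, $\cG_t=B_2\setminus\overline{B_1}$ for $t\geq 0$, and the same $\Theta$, the set $\cF$ does have locally finite perimeter. But $\partial^*\cF$ contains $\{0\}\times(B_1\cup(B_2\setminus\overline{B_1}))$. Gauss--Green with your field $(\psi f_\epsilon,\bzero)$ then produces the extra flux $\psi(0)\big(\int_{B_1}f_\epsilon-\int_{B_2\setminus\overline{B_1}}f_\epsilon\big)$, which is absent from the right-hand side.

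What is missing is a hypothesis on the time dependence of $\cG_t$ itself. For example: $t\mapsto\chi_{\cG_t}$ is continuous in $L^1_{\rm loc}(\cP)$, and $f$ is integrable enough to make $F$ continuous. This is what one expects for $\cG_t=\cD(\cM_t)$ in Section~\ref{sec:4}. Alternatively, assume directly that $\cF$ is locally of finite perimeter with $\partial^*\cF\cong\tilde\Theta(\cI\times\cN)$ and that $F$ is continuous.

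With $L^1_{\rm loc}$-continuity the argument goes as follows. Near a point $(t_0,x_0)$ off the closure of $\tilde\Theta(\cI\times\cN)$, (D2) makes each $\cG_t$ with $t$ near $t_0$ either a.e.\ empty or a.e.\ full in a small ball around $x_0$. Continuity forces the same alternative for all such $t$, so $\cF$ has density $0$ or $1$ there. Federer's criterion, given local finiteness of $\cH^d$ on that closure, then yields local finite perimeter with no horizontal part. After that, your normal $(-\bv\cdot\bnu,\bnu)/\sqrt{1+(\bv\cdot\bnu)^2}$, the coarea identity, and the mollification argument go through.
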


\bibliography{nonmeasure}
\bibliographystyle{is-alpha}

\end{document}